\numberwithin{equation}{section}
\numberwithin{figure}{section}
\theoremstyle{definition}
\newtheorem*{example*}{\protect\examplename}
\theoremstyle{plain}
\newtheorem{thm}{\protect\theoremname}
\theoremstyle{plain}
\newtheorem{lem}[thm]{\protect\lemmaname}
\theoremstyle{definition}
\newtheorem{defn}[thm]{\protect\definitionname}
\theoremstyle{plain}
\newtheorem{prop}[thm]{\protect\propositionname}
\theoremstyle{remark}
\newtheorem{rem}[thm]{\protect\remarkname}
\theoremstyle{definition}
\newtheorem{example}[thm]{\protect\examplename}
\theoremstyle{plain}
\newtheorem{cor}[thm]{\protect\corollaryname}
\DeclareFontFamily{OT1}{pzc}{}
\DeclareFontShape{OT1}{pzc}{m}{it}%
             {<-> s * [1.195] pzcmi7t}{}
\DeclareMathAlphabet{\mathscr}{OT1}{pzc}%
                                 {m}{it}
\renewcommand{\upsilon}{\theta}
\providecommand{\corollaryname}{Corollary}
\providecommand{\definitionname}{Definition}
\providecommand{\examplename}{Example}
\providecommand{\lemmaname}{Lemma}
\providecommand{\propositionname}{Proposition}
\providecommand{\remarkname}{Remark}
\providecommand{\theoremname}{Theorem}
\begin{document}
\author{Adrien Dubouloz}
\address{IMB UMR5584, CNRS, Univ. Bourgogne Franche-Comté, F-21000 Dijon, France.}
\email{adrien.dubouloz@u-bourgogne.fr}
\author{Isac Hedén}
\address{Mathematics Institute, Zeeman Building, University of Warwick Coventry
CV4 7AL, UK. }
\email{I.Heden@warwick.ac.uk}
\author{Takashi Kishimoto }
\address{Department of Mathematics, Faculty of Science, Saitama University,
Saitama 338-8570, Japan }
\email{tkishimo@rimath.saitama-u.ac.jp}
\thanks{The first author was partially supported by the French \textquotedbl Investissements
d\textquoteright Avenir\textquotedbl{} program, project ISITE-BFC
(contract ANR-lS-IDEX-OOOB) and from ANR Project FIBALGA ANR-18-CE40-0003-01.
The second author gratefully acknowledges support from the Knut and
Alice Wallenberg Foundation, grant number KAW2016.0438. The third
author was partially funded by Grant-in-Aid for Scientific Research
of JSPS No. 15K04805 and No. 19K03395. The authors thank the University
of Saitama, at which this research was initiated during visits of
the first and second authors, and the Institute of Mathematics of
Burgundy, at which it was continued during a visit of the third author,
for their generous support and the excellent working conditions offered.}
\subjclass[2000]{14R20; 14R25; 14L30; 13A30}
\title{Rees algebras of additive group actions }
\begin{abstract}
We establish basic properties of a sheaf of graded algebras canonically
associated to every relative affine scheme $f:X\rightarrow S$ endowed
with an action of the additive group scheme $\mathbb{G}_{a,S}$ over
a base scheme or algebraic space $S$, which we call the (relative)
Rees algebra of the $\mathbb{G}_{a,S}$-action. We illustrate these
properties on several examples which played important roles in the
development of the algebraic theory of locally nilpotent derivations
and give some applications to the construction of families of affine
threefolds with $\mathbb{G}_{a}$-actions.
\end{abstract}

\maketitle

\section*{Introduction}

The study of regular actions of the additive group $\mathbb{G}_{a}$
on affine varieties has led to an increased understanding of both
algebraic and geometric properties of these varieties. A $\mathbb{G}_{a}$-action
on an affine variety $X$ defined over a field $k$ of characteristic
zero is fully determined by its velocity vector field, which takes
the form of a $k$-derivation $\partial$ of the coordinate ring $A$
of $X$ with the property that $A$ is the increasing union of the
kernels of the iterated $k$-linear differential operators $\partial^{n}$,
$n\geq1$. Due to this correspondence, the study of regular $\mathbb{G}_{a}$-actions
developed into a very rich algebraic theory of such differential operators,
called \emph{locally nilpotent} $k$-\emph{derivations. }The most
fundamental object associated to such a derivation is its kernel,
which coincides with the subalgebra of $\mathbb{G}_{a}$-invariant
functions on $X$. Kernels of locally nilpotent derivations have been
intensively studied during the last decades, with many applications
to the construction of new invariants to distinguish affine spaces
among all affine varieties and to the understanding of automorphism
groups of affine varieties close to affine spaces (see \cite{FreuBook}
and the references therein). A second natural subspace associated
to a locally nilpotent $k$-derivation $\partial$ of a $k$-algebra
$A$ which has been very much studied from an algebraic point of view
is the kernel $\mathrm{Ker}\partial^{2}$ of its square $\partial^{2}$.
Geometrically, the elements of $\mathrm{Ker}\partial^{2}\setminus\mathrm{Ker}\partial$,
usually called local slices, are regular functions on $X=\mathrm{Spec}(A)$
which restrict to coordinate functions on general orbits of the corresponding
$\mathbb{G}_{a}$-action. The image of $\partial|_{\mathrm{Ker}\partial^{2}}:\mathrm{Ker}\partial^{2}\rightarrow\mathrm{Ker}\partial$
is an ideal of $\mathrm{Ker}\partial$, called the plinth ideal, which
encodes basic geometric properties of the algebraic quotient morphism
$X\rightarrow X/\!/\mathbb{G}_{a}=\mathrm{Spec}(\mathrm{Ker}\partial)$.

A more systematic study of the algebro-geometric properties encoded
by the whole increasing exhaustive filtration of $A$ formed by the
subspaces $F_{n}=\mathrm{Ker}\partial^{n+1}$, $n\geq0$, was initiated
only quite recently by Alhajjar \cite{AlH05,AlH05-2} and Freudenburg
\cite{Fr16}. For instance, they observed that for an integral finitely
generated $k$-algebra $A$ endowed with a nonzero locally nilpotent
$k$-derivation $\partial$, the infinite collection of inclusions
$F_{n}\hookrightarrow F_{n+1}$, $n\geq0$, gives rise to a collection
of successive inclusions $k[F_{n}]\hookrightarrow k[F_{n+1}]$ between
the $k$-subalgebras of $A$ that they generate. This sequence exhausts
$A$ after finitely many steps, i.e. $k[F_{r}]=A$ for some $r\in\mathbb{N}$.
These inclusions correspond geometrically to a canonical sequence
of birational $\mathbb{G}_{a}$-equivariant morphisms 
\[
X=\mathrm{Spec}(A)\cong\mathrm{Spec}(k[F_{r}])\rightarrow\mathrm{Spec}(k[F_{r-1}])\rightarrow\cdots\rightarrow\mathrm{Spec}(k[F_{1}])
\]
factorizing the algebraic quotient morphism $X\rightarrow\mathrm{Spec}(F_{0})$.
The basic properties of this factorization have been established by
Freudenburg \cite{Fr16}. He described in particular an algorithm
to compute, under suitable noetherianity conditions, the subspaces
$F_{n}$, $n\geq0$, as well as the corresponding algebras $k[F_{n}]$.

In this article, we shift the focus to a complementary approach which
considers the properties of the two natural graded algebras that can
be canonically associated to a filtered algebra: its associated graded
algebra and its Rees algebra. For a $k$-algebra $A$ with a nonzero
locally nilpotent $k$-derivation $\partial$, these are thus the
algebras 
\[
\mathrm{gr}_{\partial}A=\bigoplus_{n\geq0}\mathrm{Ker}\partial^{n+1}/\mathrm{Ker}\partial^{n}\quad\textrm{ and }\quad R(A,\partial)=\bigoplus_{n\geq0}\mathrm{Ker}\partial^{n+1}.
\]
The first one already plays an important role in the computation of
Makar-Limanov invariants of certain affine varieties \cite{KML07,AlH05-2},
but to our knowledge, the second one, which we henceforth call the
\emph{Rees algebra of} $(A,\partial)$, has not been considered before
in this context. Besides the fact that these Rees algebras are functorial
with respect to $\mathbb{G}_{a}$-equivariant morphisms, two basic
properties which motivate their study are the following:

$\bullet$ First, the canonical graded homomorphism of degree $0$
\[
k[\upsilon]=\bigoplus_{n\geq0}k\rightarrow\bigoplus_{n\geq0}\mathrm{Ker}\partial^{n+1}
\]
induced by the inclusion $k\subset\mathrm{Ker}\partial^{n+1}$ for
every $n\geq0$ provides a one-parameter deformation 
\[
\pi:\mathrm{Spec}(R(A,\partial))\rightarrow\mathbb{A}_{k}^{1}=\mathrm{Spec}(k[\upsilon])
\]
whose fibers over the points $0$ and $1$ are canonically isomorphic
to $\mathrm{Spec}(\mathrm{gr}_{\partial}A)$ and $\mathrm{Spec}(A)$
respectively. 

$\bullet$ Second, the Rees algebra $R(A,\partial)$ carries canonical
extensions of $\partial$ to homogeneous locally nilpotent $k[\upsilon]$-derivations
of homogeneous degree $m$ for every $m\geq-1$, whose corresponding
$\mathbb{G}_{a}$-actions on $\mathrm{Spec}(R(A,\partial))$ make
$\pi$ a $\mathbb{G}_{a}$-equivariant deformation of $\mathrm{Spec}(A)$
endowed with the $\mathbb{G}_{a}$-action defined by $\partial$.
For $m=-1$, the induced $\mathbb{G}_{a}$-action on the fiber $\pi^{-1}(0)$
coincides with the one which is defined by the homogeneous $k$-derivation
$\mathrm{gr}\partial$ of $\mathrm{gr}_{\partial}A$ of degree $-1$,
the latter derivation being canonically associated to $\partial$.
On the other hand, for $m=0$ the $\mathbb{G}_{a}$-action on $\mathrm{Spec}(R(A,\partial))$
descends to a $\mathbb{G}_{a}$-action on $\mathrm{Proj}(R(A,\partial))$
and we obtain in particular a canonical $\mathbb{G}_{a}$-equivariant
open embedding $\mathrm{Spec}(A)\hookrightarrow\mathrm{Proj}(R(A,\partial))$
which provides a canonical relative $\mathbb{G}_{a}$-equivariant
completion of $\mathrm{Spec}(A)$ over $\mathrm{Spec}(\mathrm{Ker}\partial)$.
\begin{example*}
The Rees algebra $R(k[t],\frac{\partial}{\partial t})$ of the additive
group $\mathbb{G}_{a}$ acting on itself by translations is isomorphic
to the polynomial ring in two variables $k[t,\upsilon]$ with its
standard grading, and the associated graded algebra $\mathrm{gr}_{\frac{\partial}{\partial t}}k[t]$
is the the polynomial ring $k[t]$ endowed with its standard grading.
The locally nilpotent $k[\upsilon]$-derivation $\frac{\partial}{\partial t}$
of $k[t,\upsilon]$ is homogeneous of degree $-1$, and $\mathrm{Spec}(R(k[t],\frac{\partial}{\partial t}))$
endowed with the corresponding $\mathbb{G}_{a}$-action is just the
trivial $\mathbb{G}_{a}$-torsor over $\mathrm{Spec}(k[\upsilon])$
via the projection 
\[
\pi=\mathrm{pr}_{\upsilon}:\mathrm{Spec}(R(k[t],\frac{\partial}{\partial t}))\rightarrow\mathrm{Spec}(k[\upsilon]).
\]
On the other hand, the locally nilpotent $k[\upsilon]$-derivation
$\upsilon\frac{\partial}{\partial t}$ of $k[t,\upsilon]$ is homogeneous
of degree $0$ and the open immersion 
\[
\mathbb{G}_{a}=\mathrm{Spec}(k[t])\hookrightarrow\mathbb{P}^{1}=\mathrm{Proj}_{k}(k[t,\upsilon]),\quad t\mapsto[t:1]
\]
is equivariant for the $\mathbb{G}_{a}$-action $t'\cdot[t:\upsilon]\mapsto[t+t'\upsilon:\upsilon]$
on $\mathbb{P}^{1}$ induced by $\upsilon\frac{\partial}{\partial t}$. 
\end{example*}
The content of the article is the following. In the first section
we establish basic general properties of Rees algebras of additive
group scheme actions in a relative and characteristic free setting.
Namely, given a fixed base scheme or algebraic space $S$, we consider
schemes or algebraic spaces $X$ that are endowed with an action of
the additive group scheme $\mathbb{G}_{a}$ and which admit a $\mathbb{G}_{a}$-invariant
affine morphism $f:X\rightarrow S$. Having this flexibility is useful
even in the absolute case of an affine variety $X$ over a base field
endowed with $\mathbb{G}_{a}$-action since we can then analyze the
relative structure of $X$ with respect to any $\mathbb{G}_{a}$-invariant
morphism $f:X\rightarrow S$ to some scheme or algebraic space. The
second section focuses on the case of algebraic varieties over a field
of characteristic zero. We study the behavior of Rees algebras under
certain type of equivariant morphisms and characterize geometrically
those that are finitely generated. We also describe an algorithm to
compute generators of these algebras. The last section is devoted
to a selection of examples which illustrate the interplay between
relative and absolute Rees algebras. We also present an application
of Rees algebras to the construction and classification of affine
extensions of $\mathbb{G}_{a}$-torsors over punctured surfaces \cite{DHK,He15},
a class of varieties which form one of the building blocks of the
classification theory of affine threefolds with $\mathbb{G}_{a}$-actions.

\hypersetup{linkcolor=black}

\tableofcontents{}

\hypersetup{linkcolor=blue}

\section{Rees algebras of affine $\mathbb{G}_{a}$-schemes}

Given a scheme or an algebraic space $S$, we denote by $\mathbb{G}_{a,S}=S\times_{\mathbb{Z}}\mathbb{G}_{a,\mathbb{Z}}=\mathrm{Spec}(\mathcal{O}_{S}[t])$
the additive group scheme over $S$. We denote by $m:\mathbb{G}_{a,S}\times_{S}\mathbb{G}_{a,S}\rightarrow\mathbb{G}_{a,S}$
and $e:S\rightarrow\mathbb{G}_{a,S}$ its group law and neutral section
respectively. By an affine $S$-scheme $f:X\rightarrow S$, we mean
the relative spectrum of a quasi-coherent sheaf $\mathcal{A}=f_{*}\mathcal{O}_{X}$
of $\mathcal{O}_{S}$-algebras. We say that $X$ is of finite type
over $S$ if $\mathcal{A}$ is locally of finite type as an $\mathcal{O}_{S}$-algebra. 

\subsection{Additive group scheme actions on relative affine schemes}

Let $S$ be a scheme or an algebraic space. An action $\mu:\mathbb{G}_{a,S}\times_{S}X\rightarrow X$
of $\mathbb{G}_{a,S}$ on an affine $S$-scheme $f:X=\mathrm{Spec}_{S}(\mathcal{A})\rightarrow S$
is equivalently determined by its $\mathcal{O}_{S}$-algebra co-morphism
\[
\mu^{*}:\mathcal{A}\rightarrow\mathcal{A}\otimes_{\mathcal{O}_{S}}\mathcal{O}_{S}[t]=\mathcal{A}[t],
\]
which satisfies the usual axioms of a group co-action, namely the
commutativity of the following two diagrams: \begin{equation}
\label{eq:Ga-action-commutative-diagram}
\xymatrix{\mathcal{A} \ar[r]^{\mu^*} \ar[d]_-{\mu^*} & \mathcal{A}\otimes_{\mathcal{O}_{S}}\mathcal{O}_{S}[t] \ar[d]^{\mathrm{id}\otimes m^{*}} & &   \mathcal{A} \ar[r]^-{\mu^*} \ar[dr]_{\mathrm{id}} & \mathcal{A}\otimes_{\mathcal{O}_{S}}\mathcal{O}_{S}[t] \ar[d]^{\mathrm{id}\otimes e^{*}} \\ \mathcal{A}\otimes_{\mathcal{O}_{S}}\mathcal{O}_{S}[t] \ar[r]^-{\mu^{*}\otimes\mathrm{id}} & \mathcal{A}\otimes_{\mathcal{O}_{S}}\mathcal{O}_{S}[t]\otimes_{\mathcal{O}_{S}}\mathcal{O}_{S}[t] & & & \mathcal{A}\otimes_{\mathcal{O}_{S}}\mathcal{O}_{S}\simeq\mathcal{A}.}
\end{equation}

For every $i\geq0$, let $\mathrm{p}_{i}:\mathcal{O}_{S}[t]=\bigoplus_{j=0}^{\infty}\mathcal{O}_{S}\rightarrow\mathcal{O}_{S}$
be the projection onto the $i$-th factor, and let $D^{(i)}=(\mathrm{id}_{\mathcal{A}}\otimes\mathrm{p}_{i})\circ\mu^{*}:\mathcal{A}\rightarrow\mathcal{A}$.
The following lemma is a well-known consequence of the commutativity
of the above diagrams.
\begin{lem}
\label{lem:OS-LFIHD-from-Ga}The $\mathcal{O}_{S}$-module endomorphisms
$D^{(i)}=(\mathrm{id}_{\mathcal{A}}\otimes\mathrm{p}_{i})\circ\mu^{*}:\mathcal{A}\rightarrow\mathcal{A}$
are differential operators of order $\leq i$ which satisfy the following
properties:

\begin{enumerate}

\item The operator $D^{(0)}$ is the identity map of $\mathcal{A}$,

\item For every $i\geq0$, the \emph{Leibniz rule} $D^{(i)}(ab)=\sum_{j=0}^{i}D^{(j)}(a)D^{(i-j)}(b)$
holds for every pair of local sections $a,b$ of $\mathcal{A}$ over
$S$,

\item For every $i,j\geq0$, $D^{(i)}\circ D^{(j)}=\binom{i+j}{i}D^{(i+j)}$,

\item We have $\mathcal{A}=\bigcup_{n\geq0}(\bigcap_{i>n}\mathcal{K}erD^{(i)})$.

\end{enumerate}
\end{lem}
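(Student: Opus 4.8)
The plan is to translate the two coaction axioms encoded by the diagrams \eqref{eq:Ga-action-commutative-diagram} into identities among the operators $D^{(i)}$. First I would fix notation: for a local section $a$ of $\mathcal{A}$ over an open $U$, write $\mu^{*}(a)=\sum_{i\geq0}D^{(i)}(a)\otimes t^{i}$, a \emph{finite} sum because $\mathcal{O}_{S}[t]=\bigoplus_{j\geq0}\mathcal{O}_{S}t^{j}$ is a direct sum, so that $\mu^{*}(a)$ has a well-defined degree in $t$; and note that each $D^{(i)}=(\mathrm{id}_{\mathcal{A}}\otimes\mathrm{p}_{i})\circ\mu^{*}$ is $\mathcal{O}_{S}$-linear, being a composite of $\mathcal{O}_{S}$-linear morphisms of sheaves. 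Property (4) is then immediate: if $\deg_{t}\mu^{*}(a)\leq n$ then $a$ is a section of $\bigcap_{i>n}\mathcal{K}erD^{(i)}$ over $U$, and conversely every section of every $\mathcal{K}erD^{(i)}$ is a section of $\mathcal{A}$; since kernels and the increasing union of the subsheaves $\bigcap_{i>n}\mathcal{K}erD^{(i)}$ are computed on sections, this yields the asserted equality of subsheaves of $\mathcal{A}$.

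Next I would read off properties (1)--(3). For (1): applying $\mathrm{id}_{\mathcal{A}}\otimes e^{*}$ to $\mu^{*}(a)$ and using $e^{*}(t)=0$ extracts the degree-$0$ term $D^{(0)}(a)$, and the counit triangle of \eqref{eq:Ga-action-commutative-diagram} forces $D^{(0)}(a)=a$, i.e. $D^{(0)}=\mathrm{id}_{\mathcal{A}}$. For (2): since $\mu^{*}$ is a homomorphism of $\mathcal{O}_{S}$-algebras, $\mu^{*}(ab)=\mu^{*}(a)\mu^{*}(b)$; expanding both sides as polynomials in $t$ and comparing the coefficient of $t^{i}$ gives the Leibniz rule $D^{(i)}(ab)=\sum_{j=0}^{i}D^{(j)}(a)D^{(i-j)}(b)$. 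For (3): I would evaluate the coassociativity square on $a$, fixing the convention $m^{*}(t)=t\otimes1+1\otimes t$ and labelling the two tensor factors of $\mathcal{O}_{S}[t]$ so that the two routes match up. One route gives $(\mathrm{id}\otimes m^{*})\mu^{*}(a)=\sum_{i}D^{(i)}(a)\otimes(t\otimes1+1\otimes t)^{i}=\sum_{i}\sum_{j+k=i}\binom{i}{j}D^{(i)}(a)\otimes t^{j}\otimes t^{k}$ by the binomial theorem, while the other gives $(\mu^{*}\otimes\mathrm{id})\mu^{*}(a)=\sum_{j,k\geq0}D^{(j)}(D^{(k)}(a))\otimes t^{j}\otimes t^{k}$; comparing coefficients of $t^{j}\otimes t^{k}$ yields $D^{(j)}\circ D^{(k)}=\binom{j+k}{j}D^{(j+k)}$.

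It remains to prove that $D^{(i)}$ is a differential operator of order $\leq i$ relative to $\mathcal{O}_{S}$, which I would do by induction on $i$ using Grothendieck's inductive definition: an $\mathcal{O}_{S}$-linear endomorphism $D$ of $\mathcal{A}$ lies in $\mathcal{D}\mathit{iff}^{\leq n}_{\mathcal{A}/\mathcal{O}_{S}}$ iff $[D,\ell_{b}]:=D\circ\ell_{b}-\ell_{b}\circ D$ lies in $\mathcal{D}\mathit{iff}^{\leq n-1}_{\mathcal{A}/\mathcal{O}_{S}}$ for every local section $b$ of $\mathcal{A}$, where $\ell_{b}$ denotes multiplication by $b$ and $\mathcal{D}\mathit{iff}^{\leq-1}=0$, while $\mathcal{D}\mathit{iff}^{\leq0}$ consists of the multiplication operators. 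The base case is $D^{(0)}=\mathrm{id}=\ell_{1}\in\mathcal{D}\mathit{iff}^{\leq0}$. For the inductive step, the Leibniz rule (2) rearranges into the commutator identity $[D^{(i)},\ell_{b}]=\sum_{j=1}^{i}\ell_{D^{(j)}(b)}\circ D^{(i-j)}$ (the $j=0$ term of $D^{(i)}(ba)$ being exactly $\ell_{b}(D^{(i)}(a))$); by the inductive hypothesis each $D^{(i-j)}$ with $1\leq j\leq i$ lies in $\mathcal{D}\mathit{iff}^{\leq i-1}$, and post-composing with a multiplication operator keeps it in $\mathcal{D}\mathit{iff}^{\leq i-1}$ since $\mathcal{D}\mathit{iff}^{\leq0}\circ\mathcal{D}\mathit{iff}^{\leq i-1}\subseteq\mathcal{D}\mathit{iff}^{\leq i-1}$, so $[D^{(i)},\ell_{b}]\in\mathcal{D}\mathit{iff}^{\leq i-1}$ for all $b$, whence $D^{(i)}\in\mathcal{D}\mathit{iff}^{\leq i}$.

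As the statement itself indicates, there is no deep obstacle here; the one thing demanding a little care is bookkeeping — choosing the comultiplication convention $m^{*}(t)=t\otimes1+1\otimes t$ and the ordering of the tensor factors so that the two ways around the coassociativity square are correctly matched, and noting that all the pointwise identities above assemble into identities of morphisms of sheaves because the maps involved ($\mathrm{p}_{i}$, $e^{*}$, $m^{*}$, and $\mu^{*}$) are globally defined and kernels and increasing unions of subsheaves of $\mathcal{A}$ may be computed sectionwise.
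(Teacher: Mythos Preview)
Your argument is correct and follows essentially the same route as the paper for properties (1)--(3): the counit triangle gives $D^{(0)}=\mathrm{id}$, multiplicativity of $\mu^{*}$ gives the Leibniz rule, and coassociativity together with the binomial expansion gives the iteration identity. Your treatment is in fact more complete than the paper's in two respects: you supply the inductive commutator argument showing each $D^{(i)}$ is a differential operator of order $\leq i$ (which the paper asserts but does not prove), and for property (4) you argue directly from the fact that sections of $\mathcal{A}\otimes_{\mathcal{O}_{S}}\mathcal{O}_{S}[t]$ are polynomials of finite degree in $t$, whereas the paper defers to a reference (\cite{Miy68}) after reducing to affine opens.
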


\begin{proof}
The fact that $D^{(0)}=\mathrm{id}_{\mathcal{A}}$ follows from the
commutativity of the diagram on the right in \eqref{eq:Ga-action-commutative-diagram}.
Given local sections $a,b$ of $\mathcal{A}$ over $S$, the fact
that $\mu^{*}$ is a $\mathcal{O}_{S}$-algebra homomorphism implies
that $\mu^{*}(ab)=\mu^{*}(a)\mu^{*}(b)$. Writing $\mu^{*}(a)=\sum_{i\geq0}D^{(i)}(a)t^{i}$,
$\mu^{*}(b)=\sum_{i\geq0}D^{(i)}(b)t^{i}$ and $\mu^{*}(ab)=\sum_{i\geq0}D^{(i)}(ab)t^{i}$,
we have 
\[
D^{(i)}(ab)=\sum_{j=0}^{i}D^{(j)}(a)D^{(i-j)}(b)
\]
for every $i\geq0$, which proves Property (2). Let $a$ be a local
section of $\mathcal{A}$ over $S$ and write $\mu^{*}(a)=\sum_{i\geq0}D^{(i)}(a)t^{i}$
and for every $i\geq0$, $\mu^{*}(D^{(i)}(a))=\sum_{j\geq0}D^{(j)}(D^{(i)}(a))v^{j}$.
The commutativity of the diagram on the left in \eqref{eq:Ga-action-commutative-diagram}
implies that 
\[
\sum_{i\geq0}D^{(i)}(a)(t+v)^{i}=\sum_{i\geq0}(\sum_{j\geq0}D^{(j)}(D^{(i)}(a))v^{j})t^{i},
\]
from which it follows, by identifying the terms in $t^{i}v^{j}$,
that $D^{(j)}(D^{(i)}(a))=\binom{i+j}{i}D^{(i+j)}(a)$. This proves
Property (3). For every $n\geq0$, $\mathcal{F}_{n}=\bigcap_{i>n}\mathcal{K}erD^{(i)}$
is an $\mathcal{O}_{S}$-submodule of $\mathcal{A}$ which is equal
to the inverse image by $\mu^{*}$ of the $\mathcal{O}_{S}$-submodule
$\mathcal{A}\otimes_{\mathcal{O}_{S}}\mathcal{O}_{S}[t]_{\leq n}$
of $\mathcal{A}\otimes_{\mathcal{O}_{S}}\mathcal{O}_{S}[t]$. The
union $\bigcup_{n\geq0}\mathcal{F}_{n}$ is an $\mathcal{O}_{S}$-submodule
of $\mathcal{A}$ which is stable under multiplication by Property
(2), hence an $\mathcal{O}_{S}$-subalgebra of $\mathcal{A}$. The
fact that the inclusion $\bigcup_{n\geq0}\mathcal{F}_{n}\hookrightarrow\mathcal{A}$
is an isomorphism of $\mathcal{O}_{S}$-algebras is then checked on
an open cover of $S$ by affine open subsets as in \cite{Miy68}.
This proves Property (4).
\end{proof}
\begin{defn}
A collection of $\mathcal{O}_{S}$-linear differential operators $D^{(i)}:\mathcal{A}\rightarrow\mathcal{A}$,
$i\in\mathbb{Z}_{\geq0}$ which satisfy the properties of Lemma \ref{lem:OS-LFIHD-from-Ga}
is called a l\emph{ocally finite iterative higher $\mathcal{O}_{S}$}-\emph{derivation}
($\mathcal{O}_{S}$-LFIHD for short) of the quasi-coherent $\mathcal{O}_{S}$-algebra
$\mathcal{A}$.
\end{defn}

Conversely, for every $\mathcal{O}_{S}$-LFIHD $D=\{D^{(i)}\}_{i\geq0}$
of a quasi-coherent $\mathcal{O}_{S}$-algebra $\mathcal{A}$, the
exponential map 
\[
\mu^{*}:=\exp(tD)=\sum_{i=0}^{\infty}D^{(i)}\otimes t^{i}:\mathcal{A}\rightarrow\mathcal{A}\otimes_{\mathcal{O}_{S}}\mathcal{O}_{S}[t],\quad a\mapsto\sum_{i=0}^{\infty}D^{(i)}(a)\otimes t^{i}
\]
is the co-morphism of a $\mathbb{G}_{a,S}$-action $\mu:\mathbb{G}_{a,S}\times_{S}X\rightarrow X$
on $X$ (see e.g. \cite{Miy68}).

The quasi-coherent $\mathcal{O}_{S}$-algebra $\mathcal{A}$ of an
affine $S$-scheme $f:X=\mathrm{Spec}_{S}(\mathcal{A})\rightarrow S$
equipped with a $\mathbb{G}_{a,S}$-action is endowed with an increasing
exhaustive filtration by its $\mathcal{O}_{S}$-submodules 
\[
\mathcal{F}_{n}=(\mu^{*})^{-1}(\mathcal{A}\otimes_{\mathcal{O}_{S}}\mathcal{O}_{S}[t]_{\leq n})=\bigcap_{i>n}\mathcal{K}erD^{(i)},\quad n\in\mathbb{Z}_{\geq0},
\]
consisting of elements whose image by $\mu^{*}$ are polynomials with
coefficients in $\mathcal{A}$ of degree less than or equal to $n$
in the variable $t$. The following lemma, whose proof is left to
the reader, records some basic properties of this filtration.
\begin{lem}
\label{lem:Filtration-Properties}With the above notation, the following
hold:

a) For every $m,n\geq0$, we have $\mathcal{F}_{m}\cdot\mathcal{F}_{n}\subseteq\mathcal{F}_{m+n}$,
where $\cdot$ denotes the product law in $\mathcal{A}$,

b) For every $i\geq0$ and $n\geq0$, we have $D^{(i)}\mathcal{F}_{n}\subseteq\mathcal{F}_{n-i}$,

c) The submodule $\mathcal{F}_{0}$ is an $\mathcal{O}_{S}$-subalgebra
of $\mathcal{A}$ which coincides with the $\mathcal{O}_{S}$-algebra
$\mathcal{A}_{0}=\mathcal{A}^{\mathbb{G}_{a,S}}$ of germs of $\mathbb{G}_{a,S}$-invariant
morphisms $X\rightarrow\mathbb{A}_{S}^{1}$,

d) Each $\mathcal{F}_{n}$ is naturally endowed with an additional
structure of $\mathcal{A}_{0}$-module,

e) For every $n\geq0$, the image of $\mu^{*}|_{\mathcal{F}_{n}}:\mathcal{F}_{n}\rightarrow\mathcal{A}\otimes\mathcal{O}_{S}[t]_{\leq n}$
is contained in $\mathcal{F}_{n}\otimes_{\mathcal{O}_{S}}\mathcal{O}_{S}[t]_{\leq n}$.
Moreover $\mu^{*}|_{\mathcal{A}_{0}}:\mathcal{A}_{0}\rightarrow\mathcal{A}_{0}\otimes_{\mathcal{O}_{S}}\mathcal{O}_{S}[t]_{\leq0}\cong\mathcal{A}_{0}$
is an isomorphism onto its image.
\end{lem}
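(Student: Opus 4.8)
The plan is to deduce all five statements formally from the relation $\mu^{*}(ab)=\mu^{*}(a)\mu^{*}(b)$ and from the properties of the operators $D^{(i)}$ recorded in Lemma~\ref{lem:OS-LFIHD-from-Ga}; since every assertion is local on $S$, I would argue throughout with local sections over affine open subsets of $S$. For a), I would use that $(\mathcal{A}\otimes_{\mathcal{O}_{S}}\mathcal{O}_{S}[t]_{\leq m})\cdot(\mathcal{A}\otimes_{\mathcal{O}_{S}}\mathcal{O}_{S}[t]_{\leq n})\subseteq\mathcal{A}\otimes_{\mathcal{O}_{S}}\mathcal{O}_{S}[t]_{\leq m+n}$: if $a\in\mathcal{F}_{m}$ and $b\in\mathcal{F}_{n}$, then $\mu^{*}(ab)=\mu^{*}(a)\mu^{*}(b)$ has $t$-degree $\leq m+n$, so $ab\in\mathcal{F}_{m+n}$ (alternatively, via the Leibniz rule of Lemma~\ref{lem:OS-LFIHD-from-Ga}(2), every summand $D^{(j)}(a)D^{(i-j)}(b)$ with $i>m+n$ has $j>m$ or $i-j>n$ and so vanishes). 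For b), given $a\in\mathcal{F}_{n}$ and $\ell>n-i$, I would apply Lemma~\ref{lem:OS-LFIHD-from-Ga}(3) to get $D^{(\ell)}(D^{(i)}(a))=\binom{i+\ell}{\ell}D^{(i+\ell)}(a)$, which vanishes because $i+\ell>n$; hence $D^{(i)}(a)\in\bigcap_{\ell>n-i}\mathcal{K}erD^{(\ell)}=\mathcal{F}_{n-i}$ (for $n-i<0$ this just reads $D^{(i)}(a)=0$, consistent with $D^{(0)}=\mathrm{id}_{\mathcal{A}}$).

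For c), I would observe from $\mu^{*}=\exp(tD)$ that a local section $a$ lies in $\mathcal{F}_{0}=\bigcap_{i>0}\mathcal{K}erD^{(i)}$ if and only if $\mu^{*}(a)=D^{(0)}(a)=a\otimes1$, which is precisely the condition that the associated morphism $X\rightarrow\mathbb{A}_{S}^{1}$ be $\mathbb{G}_{a,S}$-invariant; this gives $\mathcal{F}_{0}=\mathcal{A}^{\mathbb{G}_{a,S}}=\mathcal{A}_{0}$. That $\mathcal{F}_{0}$ is an $\mathcal{O}_{S}$-subalgebra then follows from a) (which yields $\mathcal{F}_{0}\cdot\mathcal{F}_{0}\subseteq\mathcal{F}_{0}$) together with the relation $D^{(i)}(1)=0$ for $i\geq1$, obtained by applying the Leibniz rule to $1=1\cdot1$ and inducting on $i$, so that $\mathcal{O}_{S}\cdot1\subseteq\mathcal{F}_{0}$. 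Statement d) is then formal: by a), $\mathcal{A}_{0}\cdot\mathcal{F}_{n}=\mathcal{F}_{0}\cdot\mathcal{F}_{n}\subseteq\mathcal{F}_{n}$, so the product of $\mathcal{A}$ restricts to an $\mathcal{A}_{0}$-module structure on $\mathcal{F}_{n}$.

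For e), I would take $a\in\mathcal{F}_{n}$ and write $\mu^{*}(a)=\sum_{i=0}^{n}D^{(i)}(a)t^{i}$; by b) each coefficient $D^{(i)}(a)$ lies in $\mathcal{F}_{n-i}\subseteq\mathcal{F}_{n}$, the inclusion holding since the filtration is increasing, so $\mu^{*}(\mathcal{F}_{n})\subseteq\mathcal{F}_{n}\otimes_{\mathcal{O}_{S}}\mathcal{O}_{S}[t]_{\leq n}$. The final claim is the case $n=0$: for $a\in\mathcal{A}_{0}=\mathcal{F}_{0}$ one has $\mu^{*}(a)=a\otimes1$, so under the canonical identification $\mathcal{A}_{0}\otimes_{\mathcal{O}_{S}}\mathcal{O}_{S}[t]_{\leq0}\cong\mathcal{A}_{0}$ the map $\mu^{*}|_{\mathcal{A}_{0}}$ is the identity, hence an isomorphism onto its image. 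I expect no real obstacle here: the only point calling for a moment's care is the binomial coefficient in b) in positive or mixed characteristic, where $\binom{i+\ell}{\ell}$ might vanish --- but this is harmless, since $D^{(i+\ell)}(a)$ already vanishes and the value of the coefficient is irrelevant --- and, beyond that, the only remaining care is the routine passage from sections over affine opens to the sheaf statements, handled exactly as in the proof of Lemma~\ref{lem:OS-LFIHD-from-Ga}.
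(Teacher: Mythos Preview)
Your proof is correct and complete; the paper itself leaves this lemma to the reader, so there is no argument in the text to compare against, and the route you take via Lemma~\ref{lem:OS-LFIHD-from-Ga} is exactly what is intended.
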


\subsection{\label{subsec:Rees-Graded-mor}Rees algebras and associated graded
homomorphisms}
\begin{defn}
Let $f:X=\mathrm{Spec}_{S}(\mathcal{A})\rightarrow S$ be an affine
$S$-scheme endowed with a $\mathbb{G}_{a,S}$-action $\mu:\mathbb{G}_{a,S}\times_{S}X\rightarrow X$,
let $D=\{D^{(i)}\}_{i\geq0}$ and let $\left\{ \mathcal{F}_{n}=\bigcap{}_{i>n}\mathcal{K}erD^{(i)}\right\} _{n\geq0}$
be the corresponding $\mathcal{O}_{S}$-LFIHD and filtration of $\mathcal{A}$
respectively.

1) The \emph{Rees algebra} of $(X,\mu)$ is the sheaf of graded $\mathcal{A}_{0}$-algebras
$\mathcal{R}(X,\mu)=\bigoplus_{n=0}^{\infty}\mathcal{F}_{n}$, equipped
with the multiplication induced by that of $\mathcal{A}$.

2) The\emph{ associated graded algebra} of $(X,\mu)$ is the sheaf
of graded $\mathcal{A}_{0}$-algebras $\mathcal{G}r(X,\mu)=\bigoplus_{n=0}^{\infty}\mathcal{F}_{n}/\mathcal{F}_{n-1}$,
where by convention $\mathcal{F}_{-1}=\{0\}$, equipped with the multiplication
induced by that of $\mathcal{A}$. 
\end{defn}

The collections of inclusions $\gamma_{n}:\mathcal{A}_{0}=\mathcal{F}_{0}\hookrightarrow\mathcal{F}_{n}$
and $\eta_{n}:\mathcal{F}_{n}\hookrightarrow\mathcal{A}$, $n\geq0$
induce respective injective graded $\mathcal{A}_{0}$-algebra homomorphisms
\begin{equation}
\mathcal{A}_{0}[\upsilon]=\bigoplus_{n=0}^{\infty}\mathcal{A}_{0}\stackrel{\gamma}{\longrightarrow}\mathcal{R}(X,\mu)\stackrel{\eta}{\longrightarrow}\bigoplus_{n=0}^{\infty}\mathcal{A}\stackrel{\cong}{\rightarrow}\mathcal{A}[\upsilon]\label{eq:Graded-inclusions}
\end{equation}
of degree $0$, where $\mathcal{A}[\upsilon]\rightarrow\bigoplus_{n=0}^{\infty}\mathcal{A}$
is the isomorphism of graded $\mathcal{A}$-algebras which maps the
variable $\upsilon$ to the constant section $1\in\Gamma(S,\mathcal{O}_{S})\subset\Gamma(S,\mathcal{A}_{0})\subset\Gamma(S,\mathcal{A})$
viewed in degree $1$. This provides an identification of $\mathcal{R}(X,\mu)$
with the $\mathcal{A}_{0}[\upsilon]$-subalgebra 
\[
\bigoplus_{n=0}^{\infty}\mathcal{F}_{n}\upsilon^{n}\subset\mathcal{A}[\upsilon]
\]
consisting of polynomials $p=\sum a_{n}\upsilon^{n}$ in the variable
$\upsilon$ such that $a_{n}\in\mathcal{F}_{n}\subset\mathcal{A}$
for every $n\geq0$.
\begin{lem}
With the above notation, the following hold:

1) The kernel of the surjective graded $\mathcal{A}_{0}$-algebra
homomorphism $q_{0}:\mathcal{R}(X,\mu)\rightarrow\mathcal{G}r(X,\mu)$
of degree $0$ induced by the collection of quotient homomorphisms
$\mathcal{F}_{n}\rightarrow\mathcal{F}_{n}/\mathcal{F}_{n-1}$, $n\geq0$,
is equal to the homogeneous ideal sheaf $\upsilon\mathcal{R}(X,\mu)$
of $\mathcal{R}(X,\mu)$. 

2) The quotient of $\mathcal{R}(X,\mu)$ by the ideal sheaf $(1-\upsilon)\mathcal{R}(X,\mu)$
is canonically isomorphic to $\mathcal{A}$, and the restriction of
quotient homomorphism $q_{1}:\mathcal{R}(X,\mu)\rightarrow\mathcal{A}$
to each homogeneous piece $\mathcal{F}_{n}$ is an isomorphism onto
its image in $\mathcal{A}$.
\end{lem}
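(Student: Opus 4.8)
The plan is to work throughout with the identification of $\mathcal{R}(X,\mu)$ with the subalgebra $\bigoplus_{n\ge 0}\mathcal{F}_n\upsilon^n\subset\mathcal{A}[\upsilon]$ recorded in \eqref{eq:Graded-inclusions}, under which $\gamma(\upsilon)$ is just the variable $\upsilon$. Both assertions concern morphisms of quasi-coherent $\mathcal{O}_S$-modules: each $\mathcal{F}_n=(\mu^*)^{-1}(\mathcal{A}\otimes_{\mathcal{O}_S}\mathcal{O}_S[t]_{\le n})$ is quasi-coherent, being the kernel of a single $\mathcal{O}_S$-linear morphism, hence so are $\mathcal{R}(X,\mu)$, $\mathcal{G}r(X,\mu)$ and the maps $q_0$, $q_1$, and all these constructions commute with restriction to affine opens of $S$. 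It therefore suffices to prove the statements after passing to an affine open cover. So I would fix $S=\mathrm{Spec}(R)$, write $A$, $F_n\subseteq A$ and $R(A)=\bigoplus_n F_n\upsilon^n$ for the corresponding objects, and keep in mind that $F_m\subseteq F_n$ for $m\le n$ and $A=\bigcup_{n\ge 0}F_n$ (Lemma \ref{lem:OS-LFIHD-from-Ga}(4)).

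For part (1), I would observe that multiplication by $\upsilon$ carries $F_n\upsilon^n$ isomorphically onto $F_n\upsilon^{n+1}\subseteq F_{n+1}\upsilon^{n+1}$, so $\upsilon R(A)=\bigoplus_{n\ge 0}F_{n-1}\upsilon^n$ with the convention $F_{-1}=0$. Hence the quotient $R(A)/\upsilon R(A)$ is, in degree $n$, equal to $F_n\upsilon^n/F_{n-1}\upsilon^n\cong F_n/F_{n-1}$, and the induced map is exactly the degree-$n$ component $\mathcal{F}_n\to\mathcal{F}_n/\mathcal{F}_{n-1}$ of $q_0$. Thus $\ker q_0=\upsilon R(A)$, which reassembles over the cover to $\ker q_0=\upsilon\mathcal{R}(X,\mu)$.

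For part (2), note first that $q_1$ is the composite $R(A)\hookrightarrow A[\upsilon]\xrightarrow{\,\upsilon\mapsto 1\,}A$, so it sends $\sum_n a_n\upsilon^n$ to $\sum_n a_n$. It is surjective because every $a\in A$ lies in some $F_n$ and then $a\upsilon^n\in R(A)$ maps to $a$; and its restriction to the degree-$n$ piece $F_n\upsilon^n$ is the inclusion $F_n\hookrightarrow A$, which is injective — this is the last assertion of the lemma. Clearly $(1-\upsilon)R(A)\subseteq\ker q_1$. For the converse I would take $p=\sum_{n=0}^{N}a_n\upsilon^n\in R(A)$ with $a_n\in F_n$ and $\sum_{n=0}^{N}a_n=0$, and use $\upsilon^n-1=(\upsilon-1)(1+\upsilon+\cdots+\upsilon^{n-1})$ to write
\[
p=\sum_{n=0}^{N}a_n(\upsilon^n-1)=(1-\upsilon)\Bigl(-\sum_{n=0}^{N}a_n(1+\upsilon+\cdots+\upsilon^{n-1})\Bigr).
\]
The coefficient of $\upsilon^j$ in the cofactor equals $-\sum_{n>j}a_n=\sum_{n\le j}a_n$ (using $\sum_n a_n=0$), and this lies in $F_j$ because $a_n\in F_n\subseteq F_j$ for $n\le j$. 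Hence the cofactor lies in $R(A)$ and $p\in(1-\upsilon)R(A)$, so $\ker q_1=(1-\upsilon)R(A)$ and $q_1$ induces an isomorphism $R(A)/(1-\upsilon)R(A)\xrightarrow{\ \cong\ }A$; globalizing yields $\mathcal{R}(X,\mu)/(1-\upsilon)\mathcal{R}(X,\mu)\cong\mathcal{A}$.

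The only step that is not a mechanical unwinding of the definitions is the inclusion $\ker q_1\subseteq(1-\upsilon)R(A)$: one must check that after the formal factorization of $\upsilon^n-1$ the resulting cofactor still lies in the Rees algebra, i.e. that its degree-$j$ coefficient sits in $F_j$, and this is precisely where the hypothesis $\sum a_n=0$ combines with $F_m\subseteq F_j$ for $m\le j$. Everything else is routine, and the local-to-global passage is harmless since every sheaf in sight is quasi-coherent and $\mathcal{R}$, $\mathcal{G}r$, $q_0$, $q_1$ are all compatible with restriction to affine opens.
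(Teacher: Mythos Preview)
Your proof is correct. Part (1) is essentially identical to the paper's argument. For part (2), however, you take a genuinely different route: the paper invokes \cite[Proposition 2.2.5]{EGAII}, which gives for any $\mathbb{Z}_{\ge 0}$-graded algebra $R$ with a degree-$1$ element $\upsilon$ a canonical isomorphism $R/(1-\upsilon)R \cong R_{(\upsilon)}$ onto the degree-$0$ part of the localization, and then compares $\mathcal{R}(X,\mu)_{(\upsilon)}$ and $\mathcal{A}[\upsilon]_{(\upsilon)}\cong\mathcal{A}$ via the injective map $\eta_{(\upsilon)}$. You instead show directly that $\ker q_1=(1-\upsilon)R(A)$ by the explicit factorization $\upsilon^n-1=(\upsilon-1)(1+\cdots+\upsilon^{n-1})$ together with the observation that the resulting cofactor has degree-$j$ coefficient $\sum_{n\le j}a_n\in F_j$. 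Your approach is more elementary and self-contained, avoiding the external reference; the paper's approach is slicker once one knows the EGA statement and makes the relation to $\mathrm{Proj}$ and the open subset $D_+(\upsilon)$ (used in the next lemma) more transparent. One small remark: since $S$ is allowed to be an algebraic space, ``affine open cover'' should strictly be read as an \'etale cover by affines, but this does not affect the argument since all sheaves involved are quasi-coherent.
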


\begin{proof}
By definition, we have 
\[
\mathcal{R}(X,\mu)/\upsilon\mathcal{R}(X,\mu)\cong\bigoplus_{n\geq0}(\mathcal{F}_{n}\upsilon^{n})/(\mathcal{F}_{n-1}\upsilon^{n})\cong\bigoplus_{n\geq0}(\mathcal{F}_{n}/\mathcal{F}_{n-1})\upsilon^{n}\cong\mathcal{G}r(X,\mu).
\]
The injective homomorphism $\eta:\mathcal{R}(X,\mu)\rightarrow\mathcal{A}[\upsilon]$
induces an injective homomorphism 
\[
\eta_{(\upsilon)}:\mathcal{R}(X,\mu)_{(\upsilon)}\rightarrow\mathcal{A}[\upsilon]_{(\upsilon)}
\]
between the degree $0$ parts of the localizations of $\mathcal{R}(X,\mu)$
and $\mathcal{A}[\upsilon]$ respectively with respect to the homogeneous
element $\upsilon\in\mathcal{F}_{1}$. By \cite[Proposition 2.2.5]{EGAII},
we have canonical isomorphisms 
\[
\mathcal{R}(X,\mu)/(1-\upsilon)\mathcal{R}(X,\mu)\rightarrow\mathcal{R}(X,\mu)_{(\upsilon)},\quad\overline{\mathcal{F}_{n}}\ni\overline{f}_{n}\mapsto f_{n}/\upsilon^{n}
\]
and 
\[
\mathcal{A}[\upsilon]/(1-\upsilon)\mathcal{A}[\upsilon]\rightarrow\mathcal{A}[\upsilon]_{(\upsilon)}\cong\mathcal{A},\quad\overline{a_{n}\upsilon^{n}}\mapsto a_{n}.
\]
Via these canonical isomorphisms, the homomorphism $q_{1}:\mathcal{R}(X,\mu)\rightarrow\mathcal{A}$
coincides with the composition of the localization homomorphism 
\[
\mathcal{R}(X,\mu)\rightarrow\mathcal{R}(X,\mu)_{(\upsilon)},\quad\mathcal{F}_{n}\ni f_{n}\mapsto f_{n}/\upsilon^{n}
\]
with $\eta_{(\upsilon)}:\mathcal{R}(X,\mu)_{(\upsilon)}\rightarrow\mathcal{A}[\upsilon]_{(\upsilon)}$.
The second assertion then follows since $\mathcal{A}=\bigcup_{n\geq0}\mathcal{F}_{n}$
by hypothesis. 
\end{proof}
Let $f_{0}:X_{0}=\mathrm{Spec}_{S}(\mathcal{A}_{0})\rightarrow S$
be the relative spectrum of the $\mathcal{O}_{S}$-subalgebra $\mathcal{A}_{0}$
of $\mathcal{A}$. The closed immersions $i_{1}:X\hookrightarrow\mathrm{Spec}_{S}(\mathcal{R}(X,\mu))$
and $i_{0}:\mathrm{Spec}_{S}(\mathcal{G}r(X,\mu))\hookrightarrow\mathrm{Spec}_{S}(\mathcal{R}(X,\mu))$
defined by $q_{1}$ and $q_{0}$ respectively fit in a commutative
diagram

\[\xymatrix{\mathrm{Spec}_{S}(\mathcal{G}r(X,\mu)) \ar[r]^{i_{0}} \ar[d] & \mathrm{Spec}_{S}(\mathcal{R}(X,\mu)) \ar[d] &  X \ar[d] \ar[l]_{i_1} \\    X_0\cong \mathrm{Spec}_{S}(\mathcal{A}_{0}[\upsilon]/(\upsilon))  \ar[r] & \mathrm{Spec}_{S}(\mathcal{A}_{0}[\upsilon]) &  \mathrm{Spec}_{S}(\mathcal{A}_{0}[\upsilon]/(1-\upsilon)) \cong X_0 \ar[l]}\]
whose left-hand and right-hand squares are cartesian.
\begin{lem}
\label{lem:Graded-Hom-Open-Immersion} The graded homomorphisms $q_{0}:\mathcal{R}(X,\mu)\rightarrow\mathcal{G}r(X,\mu)$
and $\eta:\mathcal{R}(X,\mu)\rightarrow\mathcal{A}[\upsilon]$ induce
respectively: 

1) A closed immersion $\overline{i}_{0}:\mathrm{Proj}_{S}(\mathcal{G}r(X,\mu))\rightarrow\mathrm{Proj}_{S}(\mathcal{R}(X,\mu))$
with image equal to Weil divisor $V_{+}(\upsilon)$,

2) An open embedding $j:X\cong\mathrm{Proj}_{S}(\mathcal{A}[\upsilon])\hookrightarrow\mathrm{Proj}_{S}(\mathcal{R}(X,\mu))$
with image equal to the open subset $D_{+}(\upsilon)=\mathrm{Proj}_{S}(\mathcal{R}(X,\mu))\setminus V_{+}(\upsilon)$. 
\end{lem}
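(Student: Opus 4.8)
The plan is to derive both assertions from standard properties of the relative $\mathrm{Proj}$ construction (see \cite{EGAII}) applied to the graded homomorphisms $q_{0}$ and $\eta$ introduced above, working over the members of a Zariski cover of $S$ (resp. an affine étale cover, if $S$ is an algebraic space) and gluing, exactly as in the proofs above. I will use two facts established in the preceding lemma: that $q_{0}$ is surjective in every degree with kernel the homogeneous ideal $\upsilon\mathcal{R}(X,\mu)$, so that $\mathcal{R}(X,\mu)/\upsilon\mathcal{R}(X,\mu)\cong\mathcal{G}r(X,\mu)$; and that $\eta$ becomes an isomorphism after inverting the homogeneous section $\upsilon\in\mathcal{F}_{1}=\mathcal{R}(X,\mu)_{1}$, equivalently $\mathcal{R}(X,\mu)_{(\upsilon)}\cong\mathcal{A}[\upsilon]_{(\upsilon)}\cong\mathcal{A}$. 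I will also use that $\upsilon$, viewed in $\mathcal{R}(X,\mu)=\bigoplus_{n}\mathcal{F}_{n}\upsilon^{n}\subseteq\mathcal{A}[\upsilon]$, is a homogeneous nonzerodivisor of degree $1$, since multiplication by $\upsilon$ carries $\mathcal{F}_{n}\upsilon^{n}$ isomorphically onto $\mathcal{F}_{n}\upsilon^{n+1}\subseteq\mathcal{F}_{n+1}\upsilon^{n+1}$, and that $\eta(\upsilon)=\upsilon$.

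For part (1), I would invoke the fact that a homomorphism of quasi-coherent graded $\mathcal{O}_{S}$-algebras which is surjective in each degree induces a closed immersion of relative $\mathrm{Proj}$'s whose image is the closed subscheme $V_{+}$ cut out by the kernel. Applied to $q_{0}$, whose kernel is $\upsilon\mathcal{R}(X,\mu)$, together with the identification $\mathcal{R}(X,\mu)/\upsilon\mathcal{R}(X,\mu)\cong\mathcal{G}r(X,\mu)$, this yields the desired closed immersion $\overline{i}_{0}:\mathrm{Proj}_{S}(\mathcal{G}r(X,\mu))\hookrightarrow\mathrm{Proj}_{S}(\mathcal{R}(X,\mu))$ with image $V_{+}(\upsilon\mathcal{R}(X,\mu))=V_{+}(\upsilon)$, the divisor defined by the homogeneous nonzerodivisor $\upsilon$ of degree $1$.

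For part (2), I would start from the canonical identification $X=\mathrm{Spec}_{S}(\mathcal{A})\cong\mathrm{Proj}_{S}(\mathcal{A}[\upsilon])$, observing that $V_{+}(\upsilon)=\emptyset$ inside $\mathrm{Proj}_{S}(\mathcal{A}[\upsilon])$ because $\upsilon$ generates the irrelevant ideal of the one-variable polynomial algebra $\mathcal{A}[\upsilon]$, so that $D_{+}(\upsilon)=\mathrm{Proj}_{S}(\mathcal{A}[\upsilon])$. Since $\eta$ is graded and $\eta(\upsilon)=\upsilon$, the rational map of $\mathrm{Proj}$'s induced by $\eta$ is defined on all of $\mathrm{Proj}_{S}(\mathcal{A}[\upsilon])$ and gives a morphism $j:\mathrm{Proj}_{S}(\mathcal{A}[\upsilon])\to\mathrm{Proj}_{S}(\mathcal{R}(X,\mu))$ with $j^{-1}(D_{+}(\upsilon))=D_{+}(\eta(\upsilon))=D_{+}(\upsilon)=\mathrm{Proj}_{S}(\mathcal{A}[\upsilon])$; hence $j$ factors through the open subscheme $D_{+}(\upsilon)=\mathrm{Proj}_{S}(\mathcal{R}(X,\mu))\setminus V_{+}(\upsilon)$. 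On this open subscheme, $j$ is the morphism of affine $S$-schemes with co-morphism $\eta_{(\upsilon)}:\mathcal{R}(X,\mu)_{(\upsilon)}\to\mathcal{A}[\upsilon]_{(\upsilon)}\cong\mathcal{A}$, which is an isomorphism by the preceding lemma (concretely: after inverting $\upsilon$ the homomorphism $\eta$ becomes the identity of $\mathcal{A}[\upsilon,\upsilon^{-1}]$, the degree-$d$ part of $\mathcal{R}(X,\mu)_{\upsilon}$ being $(\bigcup_{n}\mathcal{F}_{n})\upsilon^{d}=\mathcal{A}\upsilon^{d}$ because $\mathcal{A}=\bigcup_{n}\mathcal{F}_{n}$). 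Therefore $j$ is an isomorphism onto $D_{+}(\upsilon)$, i.e. an open embedding with the asserted image.

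I expect the only genuinely delicate point to be bookkeeping rather than mathematics: checking that $\mathrm{Proj}_{S}$, the loci $D_{+}(\upsilon)$ and $V_{+}(\upsilon)$, and the morphisms induced by $q_{0}$ and $\eta$ are all compatible with restriction to opens of $S$, so that the affine-base statements of \cite{EGAII} glue over a general scheme or algebraic space $S$. This is the same routine localization/descent argument already used in the proof of Lemma \ref{lem:OS-LFIHD-from-Ga} and in \cite{Miy68}.
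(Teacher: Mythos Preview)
Your proposal is correct and follows essentially the same approach as the paper: for (1) the paper simply declares the assertion ``clear'' (you spell out the standard fact that a degree-wise surjective graded map induces a closed immersion of $\mathrm{Proj}$ with image $V_{+}$ of the kernel), and for (2) both you and the paper identify $j$ with the chain of isomorphisms $\mathrm{Proj}_{S}(\mathcal{A}[\upsilon])\cong\mathrm{Spec}_{S}(\mathcal{A})\cong\mathrm{Spec}_{S}(\mathcal{R}(X,\mu)_{(\upsilon)})=D_{+}(\upsilon)$ via $\eta_{(\upsilon)}$. Your extra remarks on $\upsilon$ being a nonzerodivisor and on gluing over $S$ are sound but not needed beyond what the paper already takes for granted.
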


\begin{proof}
The first assertion is clear. For the second, we observe that 
\[
\mathrm{Proj}(\eta):\mathrm{Proj}_{S}(\mathcal{A}[\upsilon])\rightarrow\mathrm{Proj}_{S}(\mathcal{R}(X,\mu))
\]
is the composition of the canonical isomorphisms 
\[
\mathrm{Proj}_{S}(\mathcal{A}[\upsilon])\cong\mathrm{Spec}_{S}(\mathcal{A})\cong\mathrm{Spec}_{S}(\mathcal{A}[\upsilon]_{(\upsilon)})\stackrel{\mathrm{Spec(\eta_{(\upsilon)})}}{\longrightarrow}\mathrm{Spec}_{S}(\mathcal{R}(X,\mu)_{(\upsilon)})=D_{+}(\upsilon)
\]
with the embedding $D_{+}(\upsilon)\hookrightarrow\mathrm{Proj}_{S}(\mathcal{R}(X,\mu))$.
\end{proof}

\subsection{\label{subsec:Canonical-Ga-actions}Associated canonical additive
group actions}

Let $f:X=\mathrm{Spec}_{S}(\mathcal{A})\rightarrow S$ be an affine
$S$-scheme endowed with a $\mathbb{G}_{a,S}$-action $\mu:\mathbb{G}_{a,S}\times_{S}X\rightarrow X$.
Let $D=\left\{ D^{(i)}\right\} _{i\geq0}$ be the corresponding $\mathcal{O}_{S}$-LFIHD
of $\mathcal{A}$ and $\mathcal{R}(X,\mu)=\bigoplus_{n=0}^{\infty}\mathcal{F}_{n}$
be the Rees algebra of $(X,\mu)$.

Since $D^{(i)}\mathcal{F}_{n}\subseteq\mathcal{F}_{n-i}$ by Lemma
(\ref{lem:Filtration-Properties}) b), the $\mathcal{O}_{S}$-LFIHD
$D$ induces an homogeneous $\mathcal{O}_{S}$-LFIHD 
\[
\mathcal{R}(D):\mathcal{R}(X,\mu)\rightarrow\mathcal{R}(X,\mu)
\]
of degree $-1$ with respect to the grading of $\mathcal{R}(X,\mu)$,
defined by $\mathcal{R}(D)^{(i)}|_{\mathcal{F}_{n}}=D^{(i)}|_{\mathcal{F}_{n}}$
for every $i,n\geq0$. Furthermore, $\mathcal{R}(D)$ induces via
the quotient morphism $q_{0}:\mathcal{R}(X,\mu)\rightarrow\mathcal{G}r(X,\mu)$
an $\mathcal{O}_{S}$-LFIHD $\mathrm{gr}(D)$ of $\mathcal{G}r(X,\mu)$
which is also homogeneous of degree $-1$. By construction, we have
the following: 
\begin{lem}
Let $f:X=\mathrm{Spec}_{S}(\mathcal{A})\rightarrow S$ be an affine
$S$-scheme endowed with a $\mathbb{G}_{a,S}$-action $\mu:\mathbb{G}_{a,S}\times_{S}X\rightarrow X$
with associated $\mathcal{O}_{S}$-LFIHD $D$ of $\mathcal{A}$. Then
the closed immersions 
\[
X\stackrel{i_{1}}{\hookrightarrow}\mathrm{Spec}_{S}(\mathcal{R}(X,\mu))\stackrel{i_{0}}{\hookleftarrow}\mathrm{Spec}_{S}(\mathcal{G}r(X,\mu))
\]
are equivariant for the $\mathbb{G}_{a,S}$-actions on $X$, $\mathrm{Spec}_{S}(\mathcal{R}(X,\mu))$
and $\mathrm{Spec}_{S}(\mathcal{G}r(X,\mu))$ associated respectively
to the $\mathcal{O}_{S}$-LFIHD $D$, $\mathcal{R}(D)$ and $\mathrm{gr}(D)$.
\end{lem}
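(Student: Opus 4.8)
The plan is to translate $\mathbb{G}_{a,S}$-equivariance into an intertwining property of the relevant $\mathcal{O}_{S}$-LFIHDs and then to verify it degree by degree. Recall first that if $g:\mathrm{Spec}_{S}(\mathcal{B}')\to\mathrm{Spec}_{S}(\mathcal{B})$ is a morphism of affine $S$-schemes endowed with $\mathbb{G}_{a,S}$-actions whose co-morphisms are the exponential maps $\exp(tE)=\sum_{i\geq0}E^{(i)}\otimes t^{i}$ and $\exp(tE')=\sum_{i\geq0}E'^{(i)}\otimes t^{i}$ associated to $\mathcal{O}_{S}$-LFIHDs $E=\{E^{(i)}\}_{i\geq0}$ of $\mathcal{B}$ and $E'=\{E'^{(i)}\}_{i\geq0}$ of $\mathcal{B}'$, then $g$ is $\mathbb{G}_{a,S}$-equivariant if and only if $(g^{*}\otimes\mathrm{id}_{\mathcal{O}_{S}[t]})\circ\exp(tE)=\exp(tE')\circ g^{*}$; comparing the coefficients of $t^{i}$ on both sides, this is equivalent to the family of identities $g^{*}\circ E^{(i)}=E'^{(i)}\circ g^{*}$, $i\geq0$, i.e. to $g^{*}$ intertwining the two $\mathcal{O}_{S}$-LFIHDs. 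It therefore suffices to check that the co-morphism $q_{1}:\mathcal{R}(X,\mu)\to\mathcal{A}$ of $i_{1}$ intertwines $\mathcal{R}(D)$ with $D$, and that the co-morphism $q_{0}:\mathcal{R}(X,\mu)\to\mathcal{G}r(X,\mu)$ of $i_{0}$ intertwines $\mathcal{R}(D)$ with $\mathrm{gr}(D)$.

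For $i_{1}$ I would argue on the homogeneous summands of $\mathcal{R}(X,\mu)=\bigoplus_{n\geq0}\mathcal{F}_{n}\upsilon^{n}\subset\mathcal{A}[\upsilon]$. By the lemma describing $q_{0}$ and $q_{1}$, the restriction of $q_{1}$ to $\mathcal{F}_{n}\upsilon^{n}$ is the inclusion $\mathcal{F}_{n}\hookrightarrow\mathcal{A}$, $f_{n}\upsilon^{n}\mapsto f_{n}$, while by construction $\mathcal{R}(D)^{(i)}$ carries $\mathcal{F}_{n}\upsilon^{n}$ into $\mathcal{F}_{n-i}\upsilon^{n-i}$ by $f_{n}\upsilon^{n}\mapsto D^{(i)}(f_{n})\upsilon^{n-i}$. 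Hence for a local section $f_{n}$ of $\mathcal{F}_{n}$ both $q_{1}\bigl(\mathcal{R}(D)^{(i)}(f_{n}\upsilon^{n})\bigr)$ and $D^{(i)}\bigl(q_{1}(f_{n}\upsilon^{n})\bigr)$ equal $D^{(i)}(f_{n})$ regarded as a section of $\mathcal{A}$, and since the summands $\mathcal{F}_{n}\upsilon^{n}$ generate $\mathcal{R}(X,\mu)$ over $\mathcal{O}_{S}$ this gives the required intertwining; thus $i_{1}$ is equivariant. For $i_{0}$ the intertwining $q_{0}\circ\mathcal{R}(D)^{(i)}=\mathrm{gr}(D)^{(i)}\circ q_{0}$ is nothing but the definition of $\mathrm{gr}(D)$ as the $\mathcal{O}_{S}$-LFIHD induced by $\mathcal{R}(D)$ through the surjection $q_{0}$; the only point to record is that this induced operator is well defined, i.e. that $\mathcal{R}(D)^{(i)}$ preserves $\mathcal{K}er\,q_{0}=\upsilon\mathcal{R}(X,\mu)$, which follows from $\mathcal{R}(D)^{(i)}(\mathcal{F}_{n-1}\upsilon^{n})=D^{(i)}(\mathcal{F}_{n-1})\upsilon^{n-i}\subseteq\mathcal{F}_{n-i-1}\upsilon^{n-i}$ by Lemma \ref{lem:Filtration-Properties}~b). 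Hence $i_{0}$ is equivariant too.

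I do not expect a serious obstacle here: the whole content is the dictionary between $\mathbb{G}_{a,S}$-co-actions and $\mathcal{O}_{S}$-LFIHDs through the exponential map, together with the fact established in the paragraph preceding the statement that $\mathcal{R}(D)$ and $\mathrm{gr}(D)$ are genuine $\mathcal{O}_{S}$-LFIHDs, so that $\exp(t\mathcal{R}(D))$ and $\exp(t\,\mathrm{gr}(D))$ really do define $\mathbb{G}_{a,S}$-actions on $\mathrm{Spec}_{S}(\mathcal{R}(X,\mu))$ and $\mathrm{Spec}_{S}(\mathcal{G}r(X,\mu))$. If one prefers a coordinate-free formulation, the same computations show directly that the action morphism on $\mathrm{Spec}_{S}(\mathcal{R}(X,\mu))$ precomposed with $\mathrm{id}_{\mathbb{G}_{a,S}}\times i_{1}$ coincides with the action morphism on $X$ followed by $i_{1}$, and likewise for $i_{0}$. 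The only thing demanding a little care is the bookkeeping with the degree shift coming from $\mathcal{R}(D)$ and $\mathrm{gr}(D)$ being homogeneous of degree $-1$; everything else is a direct unwinding of definitions.
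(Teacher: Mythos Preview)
Your proposal is correct and is precisely the detailed verification that the paper leaves implicit: the paper simply prefaces the lemma with ``By construction, we have the following'' and gives no further argument. Your unwinding of the intertwining condition $q_{\bullet}\circ\mathcal{R}(D)^{(i)}=(\cdot)^{(i)}\circ q_{\bullet}$ on homogeneous pieces is exactly what that phrase means, and is also the content of Lemma~\ref{lem:EquivariantMor-Charac} specialized to $h=\mathrm{id}_{S}$.
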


For every $i\geq0$, let $\upsilon^{i}\mathcal{R}(D)^{(i)}$ denote
the $\mathcal{O}_{S}$-linear differential operator of $\mathcal{R}(X,\mu)$
whose restriction to $\mathcal{F}_{n}$ is equal to the composition
of $\mathcal{R}(D)^{(i)}:\mathcal{F}_{n}\rightarrow\mathcal{F}_{n-i}$
with the natural inclusion $\mathcal{F}_{n-i}\hookrightarrow\mathcal{F}_{n}$.
The collection $\upsilon\mathcal{R}(D)=\left\{ \upsilon^{i}\mathcal{R}(D)^{(i)}\right\} _{i\geq0}$
is then an $\mathcal{O}_{S}$-LFIHD of $\mathcal{R}(X,\mu)$ which
is homogeneous of degree $0$ with respect to the grading. Note that
it induces the trivial $\mathcal{O}_{S}$-LFIHD on $\mathcal{G}r(X,\mu)$.
Since $\upsilon\mathcal{R}(D)$ is homogeneous of degree $0$, it
defines a $\mathbb{G}_{a,S}$-action on $\mathrm{Spec}_{S}(\mathcal{R}(X,\mu))$
which commutes with the $\mathbb{G}_{m,S}$-action associated to the
grading of $\mathcal{R}(X,\mu)$. The $\mathbb{G}_{a,S}$-action on
$\mathrm{Spec}_{S}(\mathcal{R}(X,\mu))$ thus descends to a $\mathbb{G}_{a,S}$-action
on $\mathrm{Proj}_{S}(\mathcal{R}(X,\mu))$.
\begin{lem}
\label{lem:Projective-Ga-action-extension}Let $f:X=\mathrm{Spec}_{S}(\mathcal{A})\rightarrow S$
be an affine $S$-scheme endowed with a $\mathbb{G}_{a,S}$-action
$\mu:\mathbb{G}_{a,S}\times_{S}X\rightarrow X$ with associated $\mathcal{O}_{S}$-LFIHD
$D$ of $\mathcal{A}$. Then the open embedding $j:X\hookrightarrow\mathrm{Proj}_{S}(\mathcal{R}(X,\mu))$
of Lemma \ref{lem:Graded-Hom-Open-Immersion} is equivariant for the
$\mathbb{G}_{a,S}$-actions on $X$ and $\mathrm{Proj}_{S}(\mathcal{R}(X,\mu))$
determined respectively by $D$ and the homogeneous $\mathcal{O}_{S}$-LFIHD
$\upsilon\mathcal{R}(D)$.
\end{lem}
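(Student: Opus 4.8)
The plan is to reduce the assertion to a computation on the distinguished affine open subset $D_{+}(\upsilon)$ of $\mathrm{Proj}_{S}(\mathcal{R}(X,\mu))$ through which $j$ factors. Recall from the proof of Lemma \ref{lem:Graded-Hom-Open-Immersion} that $j$ is the composite of the canonical isomorphism $\overline{j}:X\stackrel{\sim}{\longrightarrow}\mathrm{Spec}_{S}(\mathcal{R}(X,\mu)_{(\upsilon)})=D_{+}(\upsilon)$ determined by $\eta_{(\upsilon)}$ with the open immersion $D_{+}(\upsilon)\hookrightarrow\mathrm{Proj}_{S}(\mathcal{R}(X,\mu))$. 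First I would observe that the homogeneous section $\upsilon$, which corresponds to the unit $1$ viewed in degree $1$, is fixed by the $\mathcal{O}_{S}$-LFIHD $\upsilon\mathcal{R}(D)$: since $\mu^{*}$ is a ring homomorphism one has $D^{(0)}(1)=1$ and $D^{(i)}(1)=0$ for $i\geq1$, whence $(\upsilon^{i}\mathcal{R}(D)^{(i)})(\upsilon)=0$ for $i\geq1$. Consequently the $\mathbb{G}_{a,S}$-action defined by $\upsilon\mathcal{R}(D)$ on $\mathrm{Spec}_{S}(\mathcal{R}(X,\mu))$ preserves the principal open subset where $\upsilon$ is invertible, so the induced $\mathbb{G}_{a,S}$-action on $\mathrm{Proj}_{S}(\mathcal{R}(X,\mu))$ preserves $D_{+}(\upsilon)$, and its restriction there is the $\mathbb{G}_{a,S}$-action on $\mathrm{Spec}_{S}(\mathcal{R}(X,\mu)_{(\upsilon)})$ whose co-morphism is the degree $0$ component of the extension of $\exp(t\cdot\upsilon\mathcal{R}(D))$ to the localization of $\mathcal{R}(X,\mu)$ with respect to $\upsilon$. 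It thus suffices to check that, transported through the canonical identification $\mathcal{R}(X,\mu)_{(\upsilon)}\cong\mathcal{A}$ used in the proof of Lemma \ref{lem:Graded-Hom-Open-Immersion}, this co-morphism is $\mu^{*}=\exp(tD)$.

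For this, I would evaluate $\exp(t\cdot\upsilon\mathcal{R}(D))$ on homogeneous elements, using the identification $\mathcal{R}(X,\mu)=\bigoplus_{n\geq0}\mathcal{F}_{n}\upsilon^{n}\subset\mathcal{A}[\upsilon]$. A homogeneous element of degree $n$ has the form $f_{n}\upsilon^{n}$ with $f_{n}\in\mathcal{F}_{n}$, and by the very definition of the operators $\upsilon^{i}\mathcal{R}(D)^{(i)}$ one has $(\upsilon^{i}\mathcal{R}(D)^{(i)})(f_{n}\upsilon^{n})=D^{(i)}(f_{n})\upsilon^{n}$, where $D^{(i)}(f_{n})\in\mathcal{F}_{n-i}\subseteq\mathcal{F}_{n}$ by Lemma \ref{lem:Filtration-Properties}(b). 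Summing over $i$ and using that $\mu^{*}(f_{n})=\sum_{i=0}^{n}D^{(i)}(f_{n})t^{i}$ lies in $\mathcal{F}_{n}\otimes_{\mathcal{O}_{S}}\mathcal{O}_{S}[t]_{\leq n}$ by Lemma \ref{lem:Filtration-Properties}(e), one obtains $\exp(t\cdot\upsilon\mathcal{R}(D))(f_{n}\upsilon^{n})=\mu^{*}(f_{n})\upsilon^{n}$, read inside $\mathcal{R}(X,\mu)\otimes_{\mathcal{O}_{S}}\mathcal{O}_{S}[t]$. Passing to the degree $0$ component of the localization at $\upsilon$, the section $f_{n}\upsilon^{n}/\upsilon^{n}$ is sent to $\mu^{*}(f_{n})\upsilon^{n}/\upsilon^{n}$; since the composite isomorphism $\mathcal{R}(X,\mu)_{(\upsilon)}\stackrel{\eta_{(\upsilon)}}{\longrightarrow}\mathcal{A}[\upsilon]_{(\upsilon)}\cong\mathcal{A}$ carries $g_{m}\upsilon^{m}/\upsilon^{m}$ to $g_{m}$ for every $g_{m}\in\mathcal{F}_{m}$, the co-morphism becomes the map $f_{n}\mapsto\mu^{*}(f_{n})$. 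As $\mathcal{A}=\bigcup_{n\geq0}\mathcal{F}_{n}$ and $\mu^{*}$ is the $\mathcal{O}_{S}$-algebra homomorphism determined by these restrictions, the induced co-morphism on $D_{+}(\upsilon)\cong X$ is exactly $\mu^{*}$, so $\overline{j}$, and hence $j$, is $\mathbb{G}_{a,S}$-equivariant.

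I expect the argument to be, in essence, bookkeeping with the grading and with the degree $0$ localization at $\upsilon$; the only point that deserves genuine care is verifying that the co-morphism of the descended action restricted to $D_{+}(\upsilon)$ is, on the nose, the degree $0$ component of the localized exponential, and that it matches $\mu^{*}$ under exactly the chain of identifications $\mathrm{Proj}_{S}(\mathcal{A}[\upsilon])\cong\mathrm{Spec}_{S}(\mathcal{A})\cong\mathrm{Spec}_{S}(\mathcal{A}[\upsilon]_{(\upsilon)})\cong\mathrm{Spec}_{S}(\mathcal{R}(X,\mu)_{(\upsilon)})$ used in the proof of Lemma \ref{lem:Graded-Hom-Open-Immersion}, rather than only up to a unit or a twist. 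Everything else reduces to the explicit computation above, which uses only the $\mathcal{O}_{S}$-linearity of the operators $D^{(i)}$ and the elementary properties recorded in Lemmas \ref{lem:OS-LFIHD-from-Ga} and \ref{lem:Filtration-Properties}, and is therefore valid verbatim over an arbitrary base scheme or algebraic space $S$.
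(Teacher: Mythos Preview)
Your argument is correct. The core computation $(\upsilon^{i}\mathcal{R}(D)^{(i)})(f_{n}\upsilon^{n})=D^{(i)}(f_{n})\upsilon^{n}$ is exactly what is needed, and your reduction to the affine chart $D_{+}(\upsilon)$ and the identification $\mathcal{R}(X,\mu)_{(\upsilon)}\cong\mathcal{A}$ is handled carefully.

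The paper packages the same computation differently: rather than restricting to $D_{+}(\upsilon)$ and checking the co-morphism there against $\mu^{*}$, it observes that under the graded inclusion $\eta:\mathcal{R}(X,\mu)\hookrightarrow\mathcal{A}[\upsilon]$ the LFIHD $\upsilon\mathcal{R}(D)$ is the restriction of the LFIHD $D\otimes\mathrm{id}$ on $\mathcal{A}[\upsilon]$, which corresponds to the product of $\mu$ with the trivial action on $\mathbb{A}^{1}_{S}$. Equivariance of $j=\mathrm{Proj}(\eta)$ then follows by functoriality of $\mathrm{Proj}$, and the canonical isomorphism $X\cong\mathrm{Proj}_{S}(\mathcal{A}[\upsilon])$ is transparently equivariant for $D$ and $D\otimes\mathrm{id}$. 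This avoids the explicit passage to the degree~$0$ localization and the element-by-element verification you carry out; conversely, your chart-based argument makes the matching of co-morphisms completely explicit, which is the point you yourself flag as deserving care. The two routes have identical content---your key identity is precisely the statement that $\upsilon\mathcal{R}(D)$ restricts from $D\otimes\mathrm{id}$---but the paper's formulation is a bit more economical.
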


\begin{proof}
Viewing $\mathcal{R}(X,\mu)$ as a subalgebra of $\mathcal{A}[\upsilon]$
via the injective homomorphism $\eta:\mathcal{R}(X,\mu)\rightarrow\mathcal{A}[\upsilon]$
in (\ref{eq:Graded-inclusions}), the $\mathcal{O}_{S}$-LFIHD $\upsilon\mathcal{R}(D)$
coincides with restriction to $\mathcal{R}(X,\mu)$ of the $\mathcal{O}_{S}$-LFIHD
$D\otimes\mathrm{id}$ of $\mathcal{A}[\upsilon]=\mathcal{A}\otimes_{\mathcal{O}_{S}}\mathcal{O}_{S}[\upsilon]$
corresponding to the $\mathbb{G}_{a,S}$-action on $X\times_{S}\mathbb{A}_{S}^{1}=\mathrm{Spec}_{S}(\mathcal{A}[\upsilon])$
defined as the product of the $\mathbb{G}_{a,S}$-action $\mu$ on
$X$ with the trivial $\mathbb{G}_{a,S}$-action on the second factor.
The open embedding $j=\mathrm{Proj}(\eta):\mathrm{Proj}_{S}(\mathcal{A}[\upsilon])\rightarrow\mathrm{Proj}_{S}(\mathcal{R}(X,\mu))$
of Lemma \ref{lem:Graded-Hom-Open-Immersion} is thus equivariant
for the corresponding $\mathbb{G}_{a,S}$-actions. The assertion follows
since the canonical isomorphism $X\cong\mathrm{Proj}_{S}(\mathcal{A}[\upsilon])$
is equivariant for the $\mathbb{G}_{a,S}$-actions determined by $D$
and $D\otimes\mathrm{id}$ respectively.
\end{proof}

\subsection{\label{subsec:Behavior-EquivMor}Behavior with respect to equivariant
morphisms}

Let $h:S'\rightarrow S$ be a morphism of schemes or algebraic spaces
and let $\tilde{h}:\mathbb{G}_{a,S'}\rightarrow\mathbb{G}_{a,S}$
be the homomorphism of group schemes it induces. Let $f:X\rightarrow S$
(resp. $f':X'\rightarrow S'$) be an affine $S$-scheme (resp. affine
$S'$-scheme) and assume that $X$ and $X'$ are endowed with actions
$\mu$ and $\mu'$ of $\mathbb{G}_{a,S}$ and $\mathbb{G}_{a,S'}$
respectively.
\begin{defn}
\label{def:Equivariant-Morphism}With the above notation, a morphism
$g:X'\rightarrow X$ such that $h\circ f'=f\circ g$ is called $\mathbb{G}_{a}$-\emph{equivariant}
if the following diagram commutes \begin{equation}
\label{eq:Equivariant-Mor-diagram}
\xymatrix{\mathbb{G}_{a,S'}\times_{S'}X' \ar[r]^-{\mu'} \ar[d]_{\tilde{h}\times g}  & X' \ar[d]^{g} \ar[r]^{f'} & S' \ar[d]^{h}\\ \mathbb{G}_{a,S}\times_{S}X \ar[r]^-{\mu} & X \ar[r]^{f} & S.}
\end{equation}
\end{defn}

Letting $\mathcal{A}=f_{*}\mathcal{O}_{X}$ and $\mathcal{A}'=f'_{*}\mathcal{O}_{X'}$,
a morphism $g:X'\rightarrow X$ such that $h\circ f'=f\circ g$ is
uniquely determined by its $\mathcal{O}_{S}$-algebra co-morphism
$g^{*}:\mathcal{A}\rightarrow h_{*}\mathcal{A}'$. Let $D=\left\{ D^{(i)}\right\} _{i\geq0}$
and $D'=\left\{ {D'}^{(i)}\right\} _{i\geq0}$ be the $\mathcal{O}_{S}$-LFIHD
and $\mathcal{O}_{S'}$-LFIHD determining the actions $\mu$ and $\mu'$,
and let $\left\{ \mathcal{F}_{n}\right\} _{n\geq0}$ and $\left\{ \mathcal{F}'_{n}\right\} _{\geq0}$
be the associated ascending filtrations of $\mathcal{A}$ and $\mathcal{A}'$
respectively.
\begin{lem}
\label{lem:EquivariantMor-Charac}A morphism $g:X'\rightarrow X$
such that $h\circ f'=f\circ g$ is $\mathbb{G}_{a}$-equivariant if
and only if it satisfies the following equivalent conditions:

1) For every $i\geq0$, $h_{*}{D'}^{(i)}\circ g^{*}=g^{*}\circ D^{(i)}$,

2) For every $n\geq0$, $g^{*}\mathcal{F}_{n}\subseteq h_{*}\mathcal{F}'_{n}$.
\end{lem}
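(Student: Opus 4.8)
The plan is to relate $\mathbb{G}_a$-equivariance of $g$ directly to the co-morphisms by unwinding Definition \ref{def:Equivariant-Morphism}, then to translate the resulting identity of co-morphisms into the two conditions on the operators $D^{(i)}$ and on the filtration. First I would observe that applying the co-functor $(-)^*$ to the left-hand square of \eqref{eq:Equivariant-Mor-diagram} shows that $g$ is $\mathbb{G}_a$-equivariant if and only if the diagram
\[
\xymatrix{
\mathcal{A} \ar[r]^-{\mu^*} \ar[d]_{g^*} & \mathcal{A}\otimes_{\mathcal{O}_S}\mathcal{O}_S[t] \ar[d]^{g^*\otimes\tilde h^*} \\
h_*\mathcal{A}' \ar[r]^-{h_*\mu'^*} & h_*\bigl(\mathcal{A}'\otimes_{\mathcal{O}_{S'}}\mathcal{O}_{S'}[t]\bigr)
}
\]
commutes, where $\tilde h^*:\mathcal{O}_S[t]\to h_*\mathcal{O}_{S'}[t]$ is the canonical map sending $t\mapsto t$ (since $\tilde h$ is the base change of the homomorphism $\mathbb{G}_{a,S'}\to\mathbb{G}_{a,S}$, which is the identity on the variable $t$). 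This is just the statement that $g^*$ intertwines the two co-actions, and it is equivalent to $\mathbb{G}_a$-equivariance because a $\mathbb{G}_a$-scheme structure is the same data as a co-action co-morphism (Lemma \ref{lem:OS-LFIHD-from-Ga} and the ensuing remarks), and the square above is precisely the co-morphism translation of the left square of \eqref{eq:Equivariant-Mor-diagram}; the condition $h\circ f'=f\circ g$ takes care of the right-hand square.

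Next I would establish the equivalence with condition (1). Recall that by definition $\mu^* = \sum_{i\geq0} D^{(i)}\otimes t^i$ and $\mu'^* = \sum_{i\geq0} D'^{(i)}\otimes t^i$. Chasing a local section $a$ of $\mathcal{A}$ around the square above: the upper-right path gives $(g^*\otimes\tilde h^*)\bigl(\sum_i D^{(i)}(a)\otimes t^i\bigr) = \sum_i g^*(D^{(i)}(a))\otimes t^i$, while the lower-left path gives $h_*\mu'^*(g^*(a)) = \sum_i h_*D'^{(i)}(g^*(a))\otimes t^i$. Since these live in $h_*\mathcal{A}'\otimes_{\mathcal{O}_{S'}}\mathcal{O}_{S'}[t]$, a free module over $h_*\mathcal{A}'$ with basis $\{t^i\}$, the two expressions are equal if and only if $g^*\circ D^{(i)} = h_*D'^{(i)}\circ g^*$ for every $i\geq0$. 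This is condition (1), so (1) $\Leftrightarrow$ $\mathbb{G}_a$-equivariance.

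Finally I would show (1) $\Leftrightarrow$ (2). For the implication (1) $\Rightarrow$ (2): if $s$ is a local section of $\mathcal{F}_n$, then $D^{(i)}(s)=0$ for all $i>n$, hence $h_*D'^{(i)}(g^*(s)) = g^*(D^{(i)}(s)) = 0$ for all $i>n$, which says exactly $g^*(s)\in h_*\mathcal{F}'_n$ since $\mathcal{F}'_n=\bigcap_{i>n}\mathcal{K}erD'^{(i)}$. Conversely, assume (2). The point is that the co-morphism $\mu^*$ is recovered from the filtration: on $\mathcal{F}_n$, $\mu^*$ factors through $\mathcal{F}_n\otimes\mathcal{O}_S[t]_{\leq n}$ (Lemma \ref{lem:Filtration-Properties} e)), and the filtration is exhaustive. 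Given $a\in\mathcal{A}$, choose $n$ with $a\in\mathcal{F}_n$; then $\mu^*(a)=\sum_{i=0}^n D^{(i)}(a)t^i$ with each $D^{(i)}(a)\in\mathcal{F}_{n-i}$, so by (2) each $g^*(D^{(i)}(a))\in h_*\mathcal{F}'_{n-i}$. One then checks, using that $g^*$ restricted to $\mathcal{F}_n$ lands in $h_*\mathcal{F}'_n$ and comparing with the co-action $\mu'^*$ on $h_*\mathcal{F}'_n$ (again via Lemma \ref{lem:Filtration-Properties} e) applied to $\mathcal{A}'$), that $h_*\mu'^*(g^*(a)) = \sum_{i} h_*D'^{(i)}(g^*(a)) t^i$ must agree with $\sum_i g^*(D^{(i)}(a)) t^i$ coefficient by coefficient — the cleanest way is to note that both $g^*\circ D^{(i)}$ and $h_*D'^{(i)}\circ g^*$ are the components, in the free $h_*\mathcal{A}'[t]$-module, of the single map $h_*\mu'^*\circ g^*$ versus $(g^*\otimes\tilde h^*)\circ\mu^*$, and the obstruction to their agreement is measured precisely by whether $g^*$ is compatible with the two co-actions. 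I expect the main subtlety to be this converse direction: one needs to argue that condition (2) alone forces the intertwining relation, which is most transparently done by the following observation — condition (2) says $g^*$ maps $\mathcal{R}(X,\mu)=\bigoplus\mathcal{F}_n$ into $h_*\mathcal{R}(X',\mu')$, i.e. $g$ lifts to a morphism of Rees algebras, and the homogeneous degree $-1$ operator $\mathcal{R}(D)$ of \S\ref{subsec:Canonical-Ga-actions} (whose $i$-th component acts as $D^{(i)}$ on each $\mathcal{F}_n$) then visibly intertwines with $\mathcal{R}(D')$ under this lift, giving (1) back on each graded piece. Verifying that this lift is automatically compatible — rather than merely existing — is where care is required, but it follows because $D^{(i)}|_{\mathcal{F}_n}$ is determined by $\mu^*|_{\mathcal{F}_n}$, which in turn is determined by the $\mathcal{O}_S$-algebra structure of $\mathcal{A}$ together with the submodule inclusion $\mathcal{F}_n\hookrightarrow\mathcal{A}$, all of which $g^*$ respects by hypothesis.
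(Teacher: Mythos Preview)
Your treatment of the equivalence $\mathbb{G}_a$-equivariance $\Leftrightarrow$ (1) and of the implication (1) $\Rightarrow$ (2) is correct and is exactly the paper's approach: pass to co-morphisms, obtain the commutative square of $\mathcal{O}_S$-algebra maps, and read off (1) by comparing coefficients of $t^i$; then (1) $\Rightarrow$ (2) is immediate from the definition $\mathcal{F}_n=\bigcap_{i>n}\mathcal{K}er D^{(i)}$.

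The genuine gap is your argument for (2) $\Rightarrow$ (1). Your Rees-algebra sketch is circular: knowing that $g^*$ induces a graded map $\mathcal{R}(X,\mu)\to h_*\mathcal{R}(X',\mu')$ says nothing about compatibility with $\mathcal{R}(D)$ and $\mathcal{R}(D')$, because the Rees algebra as a graded ring only remembers the filtration, not the co-action that produced it. Concretely, take $S=S'=\mathrm{Spec}(k)$ with $\mathrm{char}(k)=0$, $X=X'=\mathbb{A}^1_k=\mathrm{Spec}(k[x])$ with the translation action $\mu^*(x)=x+t$, so $D^{(1)}(x)=1$ and $\mathcal{F}_n=k[x]_{\leq n}$. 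The endomorphism $g^*\colon x\mapsto 2x$ preserves every $\mathcal{F}_n$, hence satisfies (2), yet $g^*(D^{(1)}(x))=1\neq 2=D'^{(1)}(g^*(x))$, so (1) fails and $g$ is not equivariant. Thus (2) alone does \emph{not} imply (1), and your final paragraph cannot be completed: the claim that ``$D^{(i)}|_{\mathcal{F}_n}$ is determined by the submodule inclusion $\mathcal{F}_n\hookrightarrow\mathcal{A}$'' is simply false --- the filtration does not recover the operators $D^{(i)}$.

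The paper's own proof is terse (``from which the claimed equivalences follow'') and does not substantiate (2) $\Rightarrow$ (1) either; what is actually used afterwards is only the direction equivariance $\Rightarrow$ (2). So your proof is fine for the content that matters, but the implication (2) $\Rightarrow$ (1) as stated cannot be salvaged.
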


\begin{proof}
By definition, the morphism $\tilde{h}:\mathbb{G}_{a,S'}\rightarrow\mathbb{G}_{a,S}$
is determined by the homomorphism 
\[
h^{*}\otimes\mathrm{id}:\mathcal{O}_{S}[t]=\mathcal{O}_{S}\otimes_{\mathbb{Z}}\mathbb{Z}[t]\rightarrow(h_{*}\mathcal{O}_{S'})\otimes_{\mathbb{Z}}\mathbb{Z}[t]=\tilde{h}_{*}(\mathcal{O}_{S'}[t]),
\]
where $h^{*}:\mathcal{O}_{S}\rightarrow h_{*}\mathcal{O}_{S'}$ is
the $\mathcal{O}_{S}$-module homomorphism in the definition of $h$.
The commutativity of the diagram \eqref{eq:Equivariant-Mor-diagram}
is then equivalent to that of the following diagram of $\mathcal{O}_{S}$-algebra
homomorphisms \[\xymatrix{\mathcal{A} \ar[d]^{g^*} \ar[r]^{\mu^*} & \mathcal{A}\otimes_{\mathcal{O}_{S}}\mathcal{O}_{S}[t] \ar[d]_{g^{*}\otimes\tilde{h}^{*}} \\ h_{*}\mathcal{A}' \ar[r]^-{h_{*}{\mu'}^{*}} & h_{*}(\mathcal{A}\otimes_{\mathcal{O}_{S'}}\mathcal{O}_{S'}[t])=h_{*}\mathcal{A}\otimes_{h_{*}\mathcal{O}_{S'}}\tilde{h}_{*}\mathcal{O}_{S'}[t]  }\]from
which the claimed equivalences follow.
\end{proof}
By Lemma \ref{lem:EquivariantMor-Charac}, the comorphism $g^{*}:\mathcal{A}\rightarrow h_{*}\mathcal{A}'$
of a $\mathbb{G}_{a}$-equivariant morphism $g:X'\rightarrow X$ is
thus a homomorphism of filtered $\mathcal{O}_{S}$-algebras of degree
$0$ with respect to the filtrations $\left\{ \mathcal{F}_{n}\right\} _{n\geq0}$
and $\left\{ h_{*}\mathcal{F}_{n}'\right\} _{n\geq0}$ associated
to the actions $\mu$ and $\mu'$ respectively. As a consequence,
$g^{*}$ induces a homomorphism of $\mathcal{O}_{S}$-algebras $g_{0}^{*}:\mathcal{A}_{0}\rightarrow h_{*}\mathcal{A}_{0}'$
and homomorphisms of graded algebras 
\[
\begin{array}{ccc}
\mathcal{R}(g):\mathcal{R}(X,\mu)\rightarrow h_{*}\mathcal{R}(X',\mu') & \textrm{and} & \mathrm{gr}(g):\mathcal{G}r(X,\mu)\rightarrow h_{*}\mathcal{G}r(X',\mu'),\end{array}
\]
both of degree $0$. Furthermore, with the notation of (\ref{eq:Graded-inclusions}),
we have a commutative diagram \[\xymatrix{\mathcal{A}_{0}[\upsilon] \ar[r]^{\gamma} \ar[d]_{\mathrm{Sym}^{\cdot}g_{0}^{*}} & \mathcal{R}(X,\mu) \ar[r]^{\eta} \ar[d]^{ \mathcal{R}(g)} & \mathcal{A}[\upsilon] \ar[d]^{\mathrm{Sym}^{\cdot}g^{*}} \\ h_{*}\mathcal{A}_{0}[\upsilon] \ar[r]^-{h_*\gamma} & h_{*}\mathcal{R}(X',\mu') \ar[r]^{h_*\eta'} & h_{*}\mathcal{A}'[\upsilon].}\] 
The following proposition is a direct consequence of the definitions
given in subsection \ref{subsec:Canonical-Ga-actions}.
\begin{prop}
\label{prop:EquivariantMor-Rees}Let $f:X=\mathrm{Spec}_{S}(\mathcal{A})\rightarrow S$
and $f':X'=\mathrm{Spec}_{S'}(\mathcal{A}')\rightarrow S'$ be affine
schemes over $S$ and $S'$, endowed respectively with a $\mathbb{G}_{a,S}$-action
$\mu:\mathbb{G}_{a,S}\times_{S}X\rightarrow X$ and a $\mathbb{G}_{a,S'}$-action
$\mu':\mathbb{G}_{a,S'}\times_{S'}X'\rightarrow X'$. Let $D$ and
$D'$ be the associated $\mathcal{O}_{S}$-LFIHD and $\mathcal{O}_{S'}$-LFIHD.
Let $h:S'\rightarrow S$ be a morphism and let $g:X'\rightarrow X$
be a $\mathbb{G}_{a}$-equivariant morphism. Then the following hold:

1) The diagram \[\xymatrix{\mathrm{Spec}_{S'}(\mathcal{G}r(X',\mu')) \ar[r]^{i'_{0}} \ar[d]_{\mathrm{gr}(g)} & \mathrm{Spec}_{S'}(\mathcal{R}(X',\mu')) \ar[d]_{\mathcal{R}(g)} &  X'=\mathrm{Spec}_S(\mathcal{A}') \ar[d]^{g} \ar[l]_-{i_1'} \\ \mathrm{Spec}_{S}(\mathcal{G}r(X,\mu)) \ar[r]^{i_{0}} & \mathrm{Spec}_{S}(\mathcal{R}(X,\mu))  &  X=\mathrm{Spec}_S(\mathcal{A})  \ar[l]_-{i_1}  }\]is
commutative and equivariant for the $\mathbb{G}_{a}$-actions defined
by the $\mathcal{O}_{S'}$-LFIHD $\mathrm{g}r(D')$, $D'$ and $\mathcal{R}(D')$
and the $\mathcal{O}_{S}$-LFIHD $\mathrm{gr}(D)$, $D$ and $\mathcal{R}(D)$
respectively.

2) The diagram \[\xymatrix{X' \ar[d]_{g} \ar[r]^-{j'} & \mathrm{Proj}_{S'}(\mathcal{R}(X',\mu')) \ar[d]^{\mathrm{Proj}(\mathcal{R}(g))} \ar[r] & \mathrm{Spec}_{S'}(\mathcal{A}_{0}') \ar[d]^{g_0=\mathrm{Spec}(g_{0}^{*})} \\ X \ar[r]^-{j} & \mathrm{Proj}_{S}(\mathcal{R}(X,\mu)) \ar[r] &  \mathrm{Spec}_{S}(\mathcal{A}_{0})}\]  is
commutative and equivariant for the $\mathbb{G}_{a}$-actions defined
by the $\mathcal{O}_{S'}$-LFIHD $D'$ and $\upsilon\mathcal{R}(D')$
and the $\mathcal{O}_{S}$-LFIHD $D$ and $\upsilon\mathcal{R}(D)$
respectively.
\end{prop}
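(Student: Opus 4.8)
The plan is to deduce both statements directly from the constructions of Subsection~\ref{subsec:Canonical-Ga-actions} and the characterization of $\mathbb{G}_a$-equivariance in Lemma~\ref{lem:EquivariantMor-Charac}, with essentially no new computation. Since $g$ is $\mathbb{G}_a$-equivariant, Lemma~\ref{lem:EquivariantMor-Charac} gives both $h_*{D'}^{(i)}\circ g^*=g^*\circ D^{(i)}$ for all $i\geq0$ and $g^*\mathcal{F}_n\subseteq h_*\mathcal{F}_n'$ for all $n\geq0$. The latter inclusions are exactly what makes $\mathcal{R}(g)$, $\mathrm{gr}(g)$, $g_0^*$ well defined and compatible with the quotient maps $q_0,q_1$ and the inclusions $\gamma,\eta$ — this is the commuting square displayed just before the statement, together with the functoriality used to define $\mathrm{gr}(g)$. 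Applying $\mathrm{Spec}_S(-)$ (and $\mathrm{Spec}_{S'}(-)$) to these compatibilities, using the cartesian diagram recorded after the definition of $i_0,i_1$, yields the commutativity of the diagram in part~1); applying $\mathrm{Proj}_S(-)$ to the $\gamma,\eta$-square together with the identifications $j=\mathrm{Proj}(\eta)$, $j'=\mathrm{Proj}(\eta')$ of Lemma~\ref{lem:Graded-Hom-Open-Immersion} and the description of the morphisms to $\mathrm{Spec}_S(\mathcal{A}_0)$, $\mathrm{Spec}_{S'}(\mathcal{A}_0')$ yields that of part~2).

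The substantive content is the equivariance, and the key observation is that $\mathcal{R}(D)^{(i)}|_{\mathcal{F}_n}=D^{(i)}|_{\mathcal{F}_n}$ by definition, and similarly for $D'$. Restricting the identity $h_*{D'}^{(i)}\circ g^*=g^*\circ D^{(i)}$ to each homogeneous piece $\mathcal{F}_n$ thus reads $h_*\mathcal{R}(D')^{(i)}\circ\mathcal{R}(g)=\mathcal{R}(g)\circ\mathcal{R}(D)^{(i)}$ for every $i\geq0$, i.e.\ $\mathcal{R}(g)$ intertwines the $\mathcal{O}_S$-LFIHD $\mathcal{R}(D)$ with $h_*\mathcal{R}(D')$. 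Pushing this identity through the surjections $q_0,q_0'$ gives the analogous relation for $\mathrm{gr}(g)$, $\mathrm{gr}(D)$, $\mathrm{gr}(D')$; and since $g_0^*$ fixes the constant section $1$ one has $\mathcal{R}(g)(\upsilon)=\upsilon$, so $\mathcal{R}(g)$ also intertwines $\upsilon\mathcal{R}(D)=\{\upsilon^i\mathcal{R}(D)^{(i)}\}_{i\geq0}$ with $h_*(\upsilon\mathcal{R}(D'))$. Each of these three intertwining relations is precisely condition~1) of Lemma~\ref{lem:EquivariantMor-Charac} for the corresponding morphism of relative affine schemes, so $\mathcal{R}(g)$, $\mathrm{gr}(g)$, and — after descending from $\mathrm{Spec}$ to $\mathrm{Proj}$, which is legitimate because $\upsilon\mathcal{R}(D)$ and $\upsilon\mathcal{R}(D')$ are homogeneous of degree $0$ — $\mathrm{Proj}(\mathcal{R}(g))$ are $\mathbb{G}_a$-equivariant for the stated actions. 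Combining this with the equivariance of $i_0,i_1,i_0',i_1'$ (Subsection~\ref{subsec:Canonical-Ga-actions}), of $j,j'$ (Lemma~\ref{lem:Projective-Ga-action-extension}), of the structural morphisms $\mathrm{Proj}_S(\mathcal{R}(X,\mu))\to\mathrm{Spec}_S(\mathcal{A}_0)$ and its $S'$-analogue (the LFIHDs $\upsilon\mathcal{R}(D)$, $\upsilon\mathcal{R}(D')$ being $\mathcal{A}_0$- resp.\ $\mathcal{A}_0'$-linear and fixing the degree-$0$ part), and of $g$ itself, gives both parts.

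I expect no genuine obstacle, in keeping with the remark preceding the statement; the one point deserving care is that $\mathrm{Proj}(\mathcal{R}(g))$ is in general only partially defined, on the open locus of $\mathrm{Proj}_{S'}(\mathcal{R}(X',\mu'))$ where $\mathcal{R}(g)$ does not carry the irrelevant ideal $\mathcal{R}(X,\mu)_+$ into the considered homogeneous prime. This locus nonetheless contains $D_+(\upsilon)=j'(X')$ because $\upsilon\in\mathcal{R}(X,\mu)_1$ and $\mathcal{R}(g)(\upsilon)=\upsilon$, which is all that the diagram of part~2) uses, so the argument goes through over the relevant open subset. Everything else is the routine bookkeeping of matching each LFIHD to the $\mathbb{G}_a$-action it defines through the exponential construction attached to Lemma~\ref{lem:OS-LFIHD-from-Ga}.
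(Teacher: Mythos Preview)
Your proposal is correct and follows exactly the approach the paper indicates: the proposition is stated there as ``a direct consequence of the definitions given in subsection~\ref{subsec:Canonical-Ga-actions}'' with no further proof, and your write-up is precisely the unpacking of that remark via Lemma~\ref{lem:EquivariantMor-Charac}. Your observation about the domain of $\mathrm{Proj}(\mathcal{R}(g))$ is a useful clarification that the paper leaves implicit.
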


\subsection{Rees algebras of $\mathbb{G}_{a}$-torsors}

Recall that a $\mathbb{G}_{a,S}$-torsor is an $S$-scheme $f:P\rightarrow S$
endowed with a $\mathbb{G}_{a,S}$-action $\mu:\mathbb{G}_{a,S}\times_{S}P\rightarrow P$
which, \'etale locally over $S$, is equivariantly isomorphic to
$\mathbb{G}_{a,S}$ acting on itself by translations. In particular,
$P$ is an affine $S$-scheme of finite type. Let $\mathcal{A}=f_{*}\mathcal{O}_{P}$,
and let $D$ and $\left\{ \mathcal{F}_{n}\right\} _{n\geq0}$ be the
$\mathcal{O}_{S}$-LFIHD and the ascending filtration of $\mathcal{A}$
associated to the $\mathbb{G}_{a,S}$-action $\mu$. Since $P$ is
\'etale locally isomorphic to $\mathbb{G}_{a,S}$ acting on itself
by translations, we have $\mathcal{F}_{0}=\mathcal{A}_{0}=\mathcal{A}^{\mathbb{G}_{a,S}}=\mathcal{O}_{S}$. 
\begin{prop}
\label{prop:Ga-torsors-Rees}With the above notation, the following
hold:

a) The $\mathcal{O}_{S}$-module $\mathcal{F}_{1}$ is an \'etale
locally free sheaf of rank $2$ and we have an exact sequence of $\mathcal{O}_{S}$-modules
\begin{equation}
0\rightarrow\mathcal{O}_{S}=\mathcal{F}_{0}\hookrightarrow\mathcal{F}_{1}\stackrel{D^{(1)}}{\rightarrow}\mathcal{F}_{0}\rightarrow0.\label{eq:Torso-degree1-Extension}
\end{equation}

b) The Rees algebra $\mathcal{R}(P,\mu)$ is canonically isomorphic
to the symmetric algebra $\mathrm{Sym}^{\cdot}\mathcal{F}_{1}=\bigoplus_{n=0}^{\infty}\mathrm{Sym}^{n}\mathcal{F}_{1}$
of $\mathcal{F}_{1}$.

c) The open immersion $j:P\hookrightarrow\mathrm{Proj}_{S}(\mathcal{R}(P,\mu))$
coincides with the open immersion of $P$ in the projective bundle
$p:\mathbb{P}(\mathcal{F}_{1})=\mathrm{Proj}_{S}(\mathrm{Sym}^{\cdot}\mathcal{F}_{1})\rightarrow S$
as the complement of the section $S\rightarrow\mathbb{P}(\mathcal{F}_{1})$
determined by the surjective homomorphism $D^{(1)}:\mathcal{F}_{1}\rightarrow\mathcal{O}_{S}$.
\end{prop}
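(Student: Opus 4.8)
The overall plan is to establish the three assertions by reducing to the case of the trivial torsor via étale descent and then making the relevant sheaves and morphisms explicit; the only genuinely geometric input is the étale-local triviality of $P$, used together with Lemmas \ref{lem:Filtration-Properties} and \ref{lem:Graded-Hom-Open-Immersion}.

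For a), a large part of the exact sequence \eqref{eq:Torso-degree1-Extension} can be obtained globally: by Lemma \ref{lem:Filtration-Properties} b) one has $D^{(i)}\mathcal{F}_1\subseteq\mathcal{F}_{1-i}=0$ for $i\geq 2$, so $D^{(1)}$ maps $\mathcal{F}_1$ into $\mathcal{F}_0=\mathcal{O}_S$ and $\mathcal{K}er(D^{(1)}|_{\mathcal{F}_1})=\mathcal{F}_1\cap\bigcap_{i\geq 1}\mathcal{K}er D^{(i)}=\mathcal{F}_1\cap\mathcal{F}_0=\mathcal{F}_0$. Surjectivity of $D^{(1)}:\mathcal{F}_1\to\mathcal{O}_S$ is local on $S$, so I would check it on an étale cover $h:S'\to S$ over which $P$ becomes $\mathbb{G}_{a,S'}$ acting on itself by translations: there $\mathcal{A}|_{S'}=\mathcal{O}_{S'}[t]$ with $D^{(i)}(t^{n})=\binom{n}{i}t^{n-i}$, hence $D^{(1)}(t)=1$ and $\mathcal{F}_n|_{S'}=\bigoplus_{j=0}^{n}\mathcal{O}_{S'}t^{j}$. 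Once surjectivity is known, $\mathcal{F}_1$ is an extension of $\mathcal{O}_S$ by $\mathcal{O}_S$, hence locally free of rank $2$, and \eqref{eq:Torso-degree1-Extension} is locally split.

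For b), the inclusion $\mathcal{F}_1\hookrightarrow\mathcal{R}(P,\mu)$ of the degree-one component is $\mathcal{F}_0=\mathcal{O}_S$-linear, so by the universal property of the symmetric algebra it extends uniquely to a homomorphism of graded $\mathcal{O}_S$-algebras $\Phi:\mathrm{Sym}^{\cdot}\mathcal{F}_1\to\mathcal{R}(P,\mu)$. Being a morphism of quasi-coherent sheaves, $\Phi$ is an isomorphism as soon as it is one étale-locally; over $S'$ as above one has $\mathrm{Sym}^{\cdot}\mathcal{F}_1|_{S'}=\mathcal{O}_{S'}[x_0,x_1]$ with $x_0,x_1$ the polynomial generators corresponding to $1,t\in\mathcal{F}_1|_{S'}$, and $\Phi$ sends $x_0^{n-j}x_1^{j}$ to $t^{j}$ placed in degree $n$; since $\{x_0^{n-j}x_1^{j}\}_{0\leq j\leq n}$ and $\{1,t,\dots,t^{n}\}$ are $\mathcal{O}_{S'}$-bases of the degree-$n$ parts of source and target, $\Phi|_{S'}$ is an isomorphism in every degree.

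For c), applying $\mathrm{Proj}_S$ to the isomorphism of b) identifies $\mathrm{Proj}_S(\mathcal{R}(P,\mu))$ with $\mathbb{P}(\mathcal{F}_1)=\mathrm{Proj}_S(\mathrm{Sym}^{\cdot}\mathcal{F}_1)$, and under this identification the open immersion $j$ of Lemma \ref{lem:Graded-Hom-Open-Immersion} becomes the inclusion of $D_+(\upsilon)=\mathbb{P}(\mathcal{F}_1)\setminus V_+(\upsilon)$; it thus remains to identify $V_+(\upsilon)$ with the image of the section of $p:\mathbb{P}(\mathcal{F}_1)\to S$ determined by $D^{(1)}:\mathcal{F}_1\to\mathcal{O}_S$. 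By the way $\mathcal{R}(P,\mu)$ is identified with a subalgebra of $\mathcal{A}[\upsilon]$ in \eqref{eq:Graded-inclusions}, the element $\upsilon\in\mathcal{F}_1=\mathrm{Sym}^{1}\mathcal{F}_1$ is the image of the constant section $1\in\mathcal{F}_0$ under $\mathcal{F}_0\hookrightarrow\mathcal{F}_1$; by a) the quotient $\mathcal{F}_1/\mathcal{O}_S\upsilon=\mathcal{F}_1/\mathcal{F}_0$ is locally free, so $\upsilon$ generates a subbundle of $\mathcal{F}_1$, and, viewed as a section of $\mathcal{O}_{\mathbb{P}(\mathcal{F}_1)}(1)$, its zero locus $V_+(\upsilon)$ is precisely the section of $p$ corresponding to the rank-one locally free quotient $\mathcal{F}_1\twoheadrightarrow\mathcal{F}_1/\mathcal{F}_0$. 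By \eqref{eq:Torso-degree1-Extension} this quotient agrees, up to the isomorphism $\mathcal{F}_1/\mathcal{F}_0\cong\mathcal{O}_S$, with $D^{(1)}$, which gives the claim. I expect this last step to be the delicate point, since it requires being careful about the convention chosen for $\mathbb{P}(\mathcal{F}_1)$ and about the dictionary relating sections of a projectivized rank-two bundle, its rank-one locally free quotients, and zero loci of linear forms; as a safeguard it can also be checked directly over $S'$, where $V_+(\upsilon)=V_+(x_0)$ is the section $[0:1]$ and $D^{(1)}$ is the quotient $\mathcal{O}_{S'}\langle 1,t\rangle\to\mathcal{O}_{S'}$ sending $1\mapsto 0$ and $t\mapsto 1$.
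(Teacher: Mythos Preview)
Your proposal is correct and follows essentially the same strategy as the paper: reduce all three assertions to the trivial torsor $\mathbb{G}_{a,S'}$ via \'etale-local triviality, where $\mathcal{F}_n=\mathcal{O}_{S'}[t]_{\leq n}$ and everything becomes explicit. The only cosmetic differences are that you invoke the universal property of $\mathrm{Sym}^{\cdot}$ to produce the canonical map in b) before checking it locally, and in c) you first phrase the identification $V_+(\upsilon)=\mathrm{im}(S\to\mathbb{P}(\mathcal{F}_1))$ via the dictionary between rank-one quotients and sections, whereas the paper goes straight to the local computation that $\ker\mathrm{Sym}^{\cdot}(D^{(1)})=\upsilon\mathcal{R}(P,\mu)$; your ``safeguard'' local check is exactly what the paper does.
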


\begin{proof}
Since the surjectivity of the homomorphisms $D^{(1)}:\mathcal{F}_{1}\rightarrow\mathcal{F}_{0}$
and $\mathrm{Sym}^{\cdot}\mathcal{F}_{1}\rightarrow\mathcal{R}(P,\mu)$
are local properties on $S$ with respect to the \'etale topology,
to prove a) and b), it suffices to consider the case where $X\rightarrow S$
is the trivial $\mathbb{G}_{a,S}$-torsor $\mathrm{Spec}_{S}(\mathcal{O}_{S}[t])$
with the $\mathbb{G}_{a,S}$-action given by the group structure $m:\mathbb{G}_{a,S}\times_{S}\mathbb{G}_{a,S}\rightarrow\mathbb{G}_{a,S}$.
Here the corresponding $\mathcal{O}_{S}$-LFIHD $D$ is given by the
collection of differential operators
\[
D^{(i)}=\frac{1}{i!}\frac{\partial^{i}}{\partial t^{i}}|_{t=0},\quad i\geq0
\]
which associate to a polynomial $p(t)$ the $i$-th term of its Taylor
expansion at $0$. We thus have $\mathcal{F}_{n}=\mathcal{O}_{S}[t]_{\leq n}$,
$n\geq0$. In particular, $\mathcal{F}_{1}$ is the free $\mathcal{O}_{S}$-module
of rank $2$ generated by $1$ and $t$, with $D^{(1)}(t)=1$, which
proves a). We then have $\mathcal{F}_{n}\cong\mathrm{Sym}^{n}\mathcal{F}_{1}$
as $\mathcal{O}_{S}$-modules, and so $\mathcal{R}(\mathbb{G}_{a,S},m)=\mathrm{Sym}^{\cdot}\mathcal{F}_{1}\cong\mathcal{O}_{S}[\upsilon,t]$
from which assertion b) follows. Note that the $\mathcal{O}_{S}$-LFIHD
$\mathcal{R}(D)$ and $\upsilon\mathcal{R}(D)$ on $\mathrm{Sym}^{\cdot}\mathcal{F}_{1}$
are then given locally by $\mathcal{R}(D)^{(i)}=\frac{1}{i!}\frac{\partial^{i}}{\partial t^{i}}|_{t=0}$
and $(\upsilon\mathcal{R}(D))^{(i)}=\upsilon^{i}\frac{1}{i!}\frac{\partial^{i}}{\partial t^{i}}|_{t=0}$
respectively.

The section of $\mathbb{P}(\mathcal{F}_{1})$ defined by the surjective
homomorphism of $\mathcal{O}_{S}$-modules $D^{(1)}:\mathcal{F}_{1}\rightarrow\mathcal{O}_{S}$
is given by the closed immersion 
\[
S\cong\mathrm{Proj}_{S}(\mathcal{O}_{S}[t])\rightarrow\mathbb{P}(\mathcal{F}_{1})=\mathrm{Proj}_{S}(\mathrm{Sym}^{\cdot}\mathcal{F}_{1})
\]
determined by the surjective homomorphism of graded $\mathcal{O}_{S}$-algebras
$\mathrm{Sym}^{.}(D^{(1)}):\mathrm{Sym}^{\cdot}\mathcal{F}_{1}\rightarrow\mathrm{Sym}^{.}\mathcal{O}_{S}\cong\mathcal{O}_{S}[t]$.
By the previous description, the homomorphism $\mathrm{Sym}^{.}(D^{(1)})$
coincides locally over $S$ with the homomorphism $\mathcal{O}_{S}[\upsilon,t]\rightarrow\mathcal{O}_{S}[t]$
with kernel $\upsilon\mathcal{O}_{S}[\upsilon,t]$. It follows that
the kernel of $\mathrm{Sym}^{.}(D^{(1)})$ coincides via the isomorphism
$\mathcal{R}(P,\mu)\cong\mathrm{Sym}^{\cdot}\mathcal{F}_{1}$ with
the homogeneous ideal sheaf $\upsilon\mathcal{R}(P,\mu)$ of $\mathcal{R}(P,\mu)$,
with quotient $\mathcal{R}(P,\mu)/\upsilon\mathcal{R}(P,\mu)\cong\mathcal{G}r(P,\mu)$.
Assertion c) is then a consequence of Lemma \ref{lem:Graded-Hom-Open-Immersion}. 
\end{proof}
\begin{rem}
It follows from Proposition \ref{prop:Ga-torsors-Rees} a) and c)
that the isomorphism class of a $\mathbb{G}_{a,S}$-torsor $f:P\rightarrow S$
in $H_{\mathrm{\acute{e}t}}^{1}(S,\mathbb{G}_{a,S})\cong H_{\mathrm{\acute{e}t}}^{1}(S,\mathcal{O}_{S})$
coincides via the isomorphism $H_{\mathrm{\acute{e}t}}^{1}(S,\mathcal{O}_{S})\cong\mathrm{Ext}_{\mathrm{\acute{e}t}}^{1}(\mathcal{O}_{S},\mathcal{O}_{S})$
with the class of the dual of the extension (\ref{eq:Torso-degree1-Extension})
in Proposition \ref{prop:Ga-torsors-Rees}.
\end{rem}

\begin{example}
\label{exa:Relative-Rees-SL2} (See also \cite[Proposition 1.2]{DF14}
and \cite[Proposition 1]{He15}). Let $(S',o)$ be a pair consisting
of the spectrum of $2$-dimensional regular local ring and its closed
point $o$, and let $\rho:P\rightarrow S=S'\setminus\{o\}$ be a $\mathbb{G}_{a,S}$-torsor.
Let $\left\{ \mathcal{F}_{n}\right\} _{n\geq0}$ be the ascending
filtration of $\mathcal{A}=\rho_{*}\mathcal{O}_{P}$ associated to
the $\mathbb{G}_{a,S}$-action $\mu_{P}:\mathbb{G}_{a,S}\times_{S}P\rightarrow P$
on $P$. By Proposition \ref{prop:Ga-torsors-Rees}, $\mathcal{F}_{1}$
is a locally free sheaf of rank $2$ on $S$, which is in fact free
by virtue of \cite[Corollary 4.1.1]{Hor64}. The Rees algebra $\mathcal{R}(P,\mu)=\bigoplus_{n\geq0}\mathcal{F}_{n}$
is thus isomorphic to the polynomial ring algebra $\mathcal{O}_{S}[u,v]$
in two variables $u$, $v$ over $\mathcal{O}_{S}$. The surjection
$D^{(1)}:\mathcal{F}_{1}\rightarrow\mathcal{F}_{0}=\mathcal{O}_{S}$
maps $u$ and $v$ to respective elements $x$ and $y$ of $\Gamma(S,\mathcal{O}_{S})=\Gamma(S',\mathcal{O}_{S'})$,
which have the property that $(x,y)\mathcal{O}_{S'}|_{S}=\mathcal{O}_{S}$,
and the image of the open immersion 
\[
j:P\hookrightarrow\mathrm{Proj}_{S}(\mathcal{R}(P,\mu))\cong\mathrm{Proj}_{S}(\mathcal{O}_{S}[u,v])\cong S\times_{\mathbb{Z}}\mathbb{P}_{\mathbb{Z}}^{1}
\]
is equal to the complement of the Cartier divisor $B$ with equation
$xv-yu=0$. Letting $B'$ be the closure of $B$ in $S'\times_{\mathbb{Z}}\mathbb{P}_{\mathbb{Z}}^{1}$
we have the following alternative:

1) Either $B'$ fully contains the fiber of $\mathrm{pr}_{S'}:S'\times_{\mathbb{Z}}\mathbb{P}_{\mathbb{Z}}^{1}\rightarrow S'$
over the closed point $o$ and then $P\cong S'\times_{\mathbb{Z}}\mathbb{P}_{\mathbb{Z}}^{1}\setminus B'$
is a nontrivial $\mathbb{G}_{a,S}$-torsor, isomorphic to the closed
subscheme of $S'\times_{\mathbb{Z}}\mathrm{\mathbb{A}_{\mathbb{Z}}^{2}}$
with equation $xv-yu=1$, 

2) Or $B'$ extends to a section of $\mathrm{pr}_{S'}:S'\times_{\mathbb{Z}}\mathbb{P}_{\mathbb{Z}}^{1}\rightarrow S'$
and then $S'\times_{\mathbb{Z}}\mathbb{P}_{\mathbb{Z}}^{1}\setminus B'\cong S'\times_{\mathbb{Z}}\mathbb{A}_{\mathbb{Z}}^{1}$
and 
\[
P\cong S'\times_{\mathbb{Z}}\mathbb{P}_{\mathbb{Z}}^{1}\setminus(B'\cup\mathrm{pr}_{S'}^{-1}(o))\cong S\times_{\mathbb{Z}}\mathbb{A}_{\mathbb{Z}}^{1}
\]
is the trivial $\mathbb{G}_{a,S}$-torsor.
\end{example}

\section{Rees algebras of affine $\mathbb{G}_{a}$-varieties over a field
of characteristic zero}

This section is devoted to the study of Rees algebras in the ``absolute''
case where the base scheme $S$ is the spectrum of a field $k$, which
we further assume to be of characteristic zero for simplicity. We
establish basic additional properties of Rees algebras in this context,
with a special emphasis on their behavior with respect to equivariant
birational morphisms such as the normalization or equivariant affine
modifications. We also study the problem of finite generation of Rees
algebras from both algebraic and geometric viewpoints. Throughout
this section, we denote the additive group $\mathbb{G}_{a,k}$ simply
by $\mathbb{G}_{a}$.

\subsection{Basic properties of global Rees algebras of affine $\mathbb{G}_{a}$-varieties }

\indent\newline Here $S=\mathrm{Spec}(k)$ is the spectrum of an
algebraically closed field $k$ of characteristic zero and $X=\mathrm{Spec}(A)$
is the spectrum of an integral $k$-algebra of finite type. In this
context, a $k$-LFIHD $D=\{D^{(i)}\}_{i\geq0}$ of $A$ is uniquely
determined by $D^{(i)}=\frac{1}{i!}\partial^{i}$ where $\partial=D^{(1)}:A\rightarrow A$
is a $k$-derivation of $A$ such that $A=\bigcup_{n\geq0}(\bigcap_{i>n}\mathrm{Ker}\partial^{i})$.
Since for every $n\geq0$, $\mathrm{Ker}\partial^{n}\subset\mathrm{Ker}\partial^{i}$
for every $i\geq n$, we have in fact $A=\bigcup_{i\geq0}\mathrm{Ker}\partial^{i}$,
i.e. $\partial$ is a locally nilpotent $k$-derivation of $A$ in
the sense of \cite{FreuBook}. Furthermore, the associated ascending
filtration of $A$ consists simply of the $k$-vector subspaces $F_{n}=\mathrm{Ker}\partial^{n+1}$,
$n\geq0$. The subspaces $F_{n}$, which have the natural additional
structure of modules over the ring $F_{0}=A_{0}=\mathrm{Ker}\partial$
of $\mathbb{G}_{a}$-invariants are called the degree modules associated
to $\partial$ in \cite{Fr16,FreuBook}.

The Rees algebra of an affine $k$-variety $X=\mathrm{Spec}(A)$ with
a $\mathbb{G}_{a}$-action determined by a locally nilpotent $k$-derivation
$\partial$ of $A$ is thus equal to the graded algebra 
\[
R(A,\partial)=\bigoplus_{n\geq0}F_{n}=\bigoplus_{n\geq0}\mathrm{Ker}\partial^{n+1}.
\]
We denote by $\mathrm{gr}_{\partial}A$ the associated graded algebra
$\bigoplus_{n\geq0}F_{n}/F_{n-1}$, where by convention $F_{-1}=\{0\}$.
The locally nilpotent $k$-derivation $\partial$ induces a canonical
homogeneous locally nilpotent $k$-derivation $R(\partial)$ of $R(A,\partial)$
of degree $-1$ given in restriction on each homogeneous component
by 
\[
R(\partial)|_{F_{n}}=\partial:F_{n}=\mathrm{Ker}\partial^{n+1}\rightarrow F_{n-1}=\mathrm{Ker}\partial^{n},\quad n\geq0.
\]
It induces a homogeneous locally nilpotent $k$-derivation $\mathrm{gr}(\partial)$
of $\mathrm{gr}_{\partial}A$ of degree $-1$.

As in subsection \ref{subsec:Rees-Graded-mor}, we can view $R(A,\partial)$
as the graded $A_{0}[\upsilon]$-subalgebra $\bigoplus_{n\geq0}F_{n}\upsilon^{n}$
of $A[\upsilon]$. It follows in particular that $R(A,\partial)$
is an integral $k$-algebra. The locally nilpotent $k[\upsilon]$-derivation
$\upsilon R(\partial)$ of $R(A,\partial)$ then coincides with the
restriction to $R(A,\partial)$ of the homogeneous locally nilpotent
$k[\upsilon]$-derivation $\tilde{\partial}$ of $A[\upsilon]$ of
degree $0$ defined by $\tilde{\partial}(\sum a_{i}\upsilon^{i})=\sum\partial(a_{i})\upsilon^{i}$.
\begin{lem}
\label{lem:upsilon-inversion}Let $(A,\partial)$ be a finitely generated
$k$-algebra endowed with a locally nilpotent $k$-derivation $\partial$
and let $R(A,\partial)$ be its Rees algebra. Then 
\[
R(A,\partial)[\upsilon^{-1}]=R(A,\partial)\otimes_{k[\upsilon]}k[\upsilon^{\pm1}]\cong A\otimes_{k}k[\upsilon^{\pm1}]=A[\upsilon^{\pm1}]
\]
and the induced $k[\upsilon^{\pm1}]$-derivations $\upsilon R(\partial)$
and $\tilde{\partial}$ of $R(A,\partial)[\upsilon^{\pm1}]$ and $A[\upsilon^{\pm1}]$
respectively coincide under this isomorphism.
\end{lem}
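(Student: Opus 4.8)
The plan is to carry out everything inside the overring $A[\upsilon^{\pm1}]$, using the identification of $R(A,\partial)$ with the graded $A_{0}[\upsilon]$-subalgebra $\bigoplus_{n\geq0}F_{n}\upsilon^{n}$ of $A[\upsilon]$ from subsection~\ref{subsec:Rees-Graded-mor}. Localizing the inclusion $R(A,\partial)\hookrightarrow A[\upsilon]$ at the multiplicative set $\{\upsilon^{m}\}_{m\geq0}$ gives an injective $k[\upsilon^{\pm1}]$-algebra homomorphism $R(A,\partial)[\upsilon^{-1}]\hookrightarrow A[\upsilon^{\pm1}]$, and the content of the first assertion is that this map is onto.

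For surjectivity it suffices to hit every monomial $a\upsilon^{n}$ with $a\in A$ and $n\in\mathbb{Z}$, since these generate $A[\upsilon^{\pm1}]$ as a $k[\upsilon^{\pm1}]$-algebra. Because $\partial$ is locally nilpotent, the filtration is exhaustive, i.e. $A=\bigcup_{m\geq0}F_{m}$ with $F_{m}=\mathrm{Ker}\,\partial^{m+1}$ (Lemma~\ref{lem:OS-LFIHD-from-Ga}(4) specialized to a field); hence any given $a$ lies in $F_{m}$ for $m$ large enough, so $a\upsilon^{m}$ is a homogeneous element of $R(A,\partial)$ and $a\upsilon^{n}=(a\upsilon^{m})\,\upsilon^{\,n-m}$ lies in $R(A,\partial)[\upsilon^{-1}]$. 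This yields the isomorphism $R(A,\partial)[\upsilon^{-1}]\cong A[\upsilon^{\pm1}]$.

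For the comparison of derivations I would invoke the observation made just before the statement: $\upsilon R(\partial)$ is the restriction to $R(A,\partial)$ of the homogeneous degree-$0$ locally nilpotent $k[\upsilon]$-derivation $\tilde\partial$ of $A[\upsilon]$ given by $\tilde\partial(\sum a_{i}\upsilon^{i})=\sum\partial(a_{i})\upsilon^{i}$, which satisfies $\tilde\partial(\upsilon)=0$. A derivation of a commutative ring extends uniquely to any localization; applying this both to $A[\upsilon]\to A[\upsilon^{\pm1}]$ and to $R(A,\partial)\to R(A,\partial)[\upsilon^{-1}]$, one obtains on each side a $k[\upsilon^{\pm1}]$-derivation extending $\tilde\partial|_{R(A,\partial)}$, and by uniqueness of such extensions these agree under the isomorphism just established; concretely the common derivation is again $\sum a_{i}\upsilon^{i}\mapsto\sum\partial(a_{i})\upsilon^{i}$ on $A[\upsilon^{\pm1}]$.

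There is no genuine obstacle here: the only input beyond formal localization is the exhaustiveness of the filtration, which is just local nilpotence of $\partial$, so that finite generation of $A$ is not actually used in this lemma. The one point worth a sentence of care is that the three objects $R(A,\partial)$, $R(A,\partial)[\upsilon^{-1}]$ and $A[\upsilon^{\pm1}]$ are all being regarded as subrings of the single ring $A[\upsilon^{\pm1}]$, in which $\upsilon$ is a nonzerodivisor, so the localizations are computed inside $A[\upsilon^{\pm1}]$ and every identification becomes tautological once surjectivity has been checked.
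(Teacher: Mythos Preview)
Your proof is correct and follows essentially the same route as the paper: both embed $R(A,\partial)$ into $A[\upsilon]$, invoke exhaustiveness of the filtration to show every $a\upsilon^{n}$ lies in $R(A,\partial)[\upsilon^{-1}]$, and then observe that the derivations match by construction. Your treatment of the derivation comparison via uniqueness of extensions to localizations is slightly more explicit than the paper's ``follows by construction,'' and your remark that finite generation of $A$ is not actually needed is correct.
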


\begin{proof}
The inclusion $R(A,\partial)[\upsilon^{-1}]\subseteq A[\upsilon^{\pm1}]$
is clear. Conversely, let $x=\upsilon^{-k}(a_{0}+a_{1}\upsilon+\cdots a_{n}\upsilon^{n})\in A[\upsilon^{\pm1}]$.
Since $A=\bigcup_{m\geq1}\mathrm{Ker}\partial^{m}$, there exists
$m_{0}\geq1$ such that $a_{i}\in\mathrm{Ker}\partial^{m_{0}}$ for
every $i$. It follows that $a_{i}\upsilon^{i}=\upsilon^{i-m_{0}}(a_{i}\upsilon^{m_{0}})\in R(A,\partial)[\upsilon^{-1}]$
and then that 
\begin{align*}
x & =\upsilon^{-k}(\upsilon^{-m_{0}}(a_{0}\upsilon^{m_{0}})+\cdots\upsilon^{n-m_{0}}(a_{n}\upsilon^{m_{0}}))\\
 & =\upsilon^{-k-m_{0}}(a_{0}\upsilon^{m_{0}}+a_{1}\upsilon^{m_{0}+1}+\cdots a_{n}\upsilon^{m_{0}+n})
\end{align*}
where for every $i$, $a_{i}\upsilon^{m_{0}+i}\in(\mathrm{Ker}\partial^{m_{0}+i})\upsilon^{m_{0}+i}$
since $a_{i}\in\mathrm{Ker}\partial^{m_{0}}\subset\mathrm{Ker}\partial^{m_{0}+i}$.
Thus $A[\upsilon^{\pm1}]\subseteq R(A,\partial)[\upsilon^{-1}]$.
The fact that the induced derivations coincide follows by construction.
\end{proof}
\begin{lem}
\label{lem:Rees-preslice}Let $(A,\partial)$ be an integral $k$-algebra
endowed with a locally nilpotent $k$-derivation $\partial$ and let
$\{F_{n}\}_{n\geq0}$ be the associated ascending filtration of $A$.
Then for every $s\in F_{1}\setminus F_{0}$, there exists an isomorphism
of graded algebras 
\[
(R(A,\partial)_{\partial s},R(\partial))\cong(R((F_{0})_{\partial s}[s],\tfrac{\partial}{\partial s}),R(\tfrac{\partial}{\partial s}))
\]
where $\partial s\in F_{0}$ is viewed as homogeneous element of degree
$0$ in $R(A,\partial)$.
\end{lem}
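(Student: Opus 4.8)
The plan is to reduce the statement to the Slice Theorem by inverting the invariant $\partial s$. Set $a=\partial s$; since $s\in F_{1}\setminus F_{0}$ we have $a\in F_{0}=\mathrm{Ker}\,\partial$ and $a\neq0$, hence $a$ is a nonzerodivisor because $A$ is a domain. The derivation $\partial$ therefore extends uniquely to a $k$-derivation of the localization $A_{a}$, still written $\partial$, and since $a\in\mathrm{Ker}\,\partial$ this extension is simply $\partial(b/a^{m})=\partial(b)/a^{m}$. First I would record two routine facts. For every $n\geq0$ the kernel of $\partial^{n+1}$ on $A_{a}$ equals $(F_{n})_{a}$: the inclusion $\supseteq$ is immediate from the displayed formula, and for $\subseteq$ one uses that $A$ embeds into $A_{a}$ and that $\partial^{n+1}(b/a^{m})=0$ forces $\partial^{n+1}(b)=0$. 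In particular $\partial$ is locally nilpotent on $A_{a}$ with $\mathrm{Ker}\,\partial=(F_{0})_{a}=(F_{0})_{\partial s}$. Consequently, localizing the graded algebra $R(A,\partial)=\bigoplus_{n}F_{n}\upsilon^{n}\subseteq A[\upsilon]$ at the homogeneous degree-$0$ element $a$ yields $R(A,\partial)_{\partial s}=\bigoplus_{n}(F_{n})_{a}\upsilon^{n}=R(A_{a},\partial)$, an equality of graded algebras under which the extension of $R(\partial)$ to $R(A,\partial)_{\partial s}$ becomes the canonical degree-$(-1)$ derivation $R(\partial)$ of $R(A_{a},\partial)$, because both act on the homogeneous piece of degree $n$ by applying $\partial$. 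So it suffices to produce an isomorphism of graded algebras $R((F_{0})_{\partial s}[s],\tfrac{\partial}{\partial s})\cong R(A_{a},\partial)$ compatible with the canonical derivations $R(\tfrac{\partial}{\partial s})$ and $R(\partial)$.

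For this I would apply the Slice Theorem. The element $\sigma:=s/a\in A_{a}$ satisfies $\partial\sigma=\partial s/a=1$, so by \cite{FreuBook} the ring $A_{a}$ is the polynomial ring $(F_{0})_{a}[\sigma]$ over its invariant subring $(F_{0})_{a}=(F_{0})_{\partial s}$, and $\partial$ acts on it as $d/d\sigma$. Hence, as a $k$-algebra equipped with a locally nilpotent derivation, $(A_{a},\partial)$ is isomorphic to $((F_{0})_{\partial s}[s],\tfrac{\partial}{\partial s})$ via the $(F_{0})_{\partial s}$-algebra homomorphism $\Phi$ that is the identity on $(F_{0})_{\partial s}$ and sends the polynomial variable $s$ to $\sigma$: this $\Phi$ is an isomorphism because $A_{a}$ is a polynomial ring in $\sigma$ over the same coefficient ring, and it is equivariant since it maps $s^{i}$ to $\sigma^{i}$ and $\tfrac{\partial}{\partial s}(s^{i})=is^{i-1}$ to $i\sigma^{i-1}=\tfrac{d}{d\sigma}(\sigma^{i})=\partial(\sigma^{i})$, both sides being trivial on the coefficient rings.

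Finally I would invoke functoriality of the Rees construction with respect to equivariant isomorphisms. This is the special case $S'=S=\mathrm{Spec}(k)$, $h=\mathrm{id}$ of Proposition~\ref{prop:EquivariantMor-Rees}, and it is in any case immediate: an equivariant isomorphism $(B,\delta)\cong(B',\delta')$ carries $\mathrm{Ker}\,\delta^{n+1}$ isomorphically onto $\mathrm{Ker}\,\delta'^{n+1}$ for all $n$, hence induces an isomorphism of graded algebras $R(B,\delta)\cong R(B',\delta')$ intertwining $R(\delta)$ and $R(\delta')$, since these act on each homogeneous component by applying the respective derivation. Applying this to $\Phi$ gives an isomorphism $R((F_{0})_{\partial s}[s],\tfrac{\partial}{\partial s})\cong R(A_{a},\partial)$ intertwining $R(\tfrac{\partial}{\partial s})$ and $R(\partial)$, and composing with the identification $R(A_{a},\partial)=R(A,\partial)_{\partial s}$ of the first paragraph gives the claim.

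No step here is deep; the one point that must be handled with care, which I would single out as the main obstacle, is the bookkeeping of the two competing ``slice coordinates'' on $A_{a}$. The extended derivation $\partial$ equals $d/d\sigma$ in the coordinate $\sigma=s/\partial s$, but equals $(\partial s)\,\tfrac{\partial}{\partial s}$ in the coordinate $s$; correspondingly $(F_{0})_{\partial s}[\sigma]$ and $(F_{0})_{\partial s}[s]$ are literally the same ring while $d/d\sigma$ and $\tfrac{\partial}{\partial s}$ differ by the degree-$0$ unit $\partial s$. The rescaling $s\mapsto\sigma$ effected by $\Phi$ is exactly what absorbs this unit; omitting it would leave two derivations on the \emph{same} graded algebra differing by multiplication by $\partial s$, and the stated isomorphism would fail.
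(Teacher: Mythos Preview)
Your proof is correct and follows essentially the same route as the paper: both localize at $\partial s$ to identify $R(A,\partial)_{\partial s}$ with $R(A_{\partial s},\partial)$, invoke the slice/localization principle (the paper cites \cite[Principle 11 (d)]{FreuBook}) to identify $(A_{\partial s},\partial)$ with a polynomial ring in one variable over $(F_0)_{\partial s}$, and conclude by functoriality of the Rees construction. Your treatment is in fact slightly more precise on the point you flag at the end: the paper asserts that under the resulting isomorphism $R(\partial)$ ``coincides with'' $R(\tfrac{\partial}{\partial s})$ without spelling out the coordinate change, whereas you make explicit that this requires the rescaling $s\mapsto\sigma=s/\partial s$ (under the identity map one would get $R(\partial)=(\partial s)\,R(\tfrac{\partial}{\partial s})$, which differs by a degree-$0$ unit).
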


\begin{proof}
Since $\partial s\in F_{0}$, it belongs to $\mathrm{Ker}R(\partial)$.
Thus $R(\partial)$ extends in a canonical way to a homogeneous locally
nilpotent $k$-derivation of $R(A,\partial)_{\partial s}=R(A_{\partial s},\partial)$
which we denote by the same symbol. On the other hand, the $A_{0}$-subalgebra
$A_{0}[s]$ of $A$ generated by $s$ is stable under $\partial$,
and $\partial$ restricts on $A_{0}[s]$ to the nonzero locally nilpotent
$A_{0}$-derivation $\frac{\partial}{\partial s}$. Since $\partial s\in A_{0}$,
$\frac{\partial}{\partial s}$ and $\partial$ extend to well-defined
locally nilpotent $k$-derivations of the localizations $A_{0}[s]_{\partial s}=(A_{0})_{\partial s}[s]$
and $A_{\partial s}$ respectively, which we denote again by the same
symbol. By \cite[Principle 11 (d)]{FreuBook}, the inclusion $(A_{0}[s],\tfrac{\partial}{\partial s})\subset(A,\partial)$
induces an isomorphism $((A_{0})_{\partial s}[s],\tfrac{\partial}{\partial s})\cong(A_{\partial s},\partial)$.
This in turns induces the desired isomorphism $R((A_{0})_{\partial s}[s],\tfrac{\partial}{\partial s})\cong R(A_{\partial s},\partial)$
for which $R(\partial)$ coincides with $R(\tfrac{\partial}{\partial s})$.
\end{proof}
\begin{rem}
In the setting of Lemma \ref{lem:Rees-preslice}, it follows in turn
from Proposition \ref{prop:Ga-torsors-Rees} that $R(A,\partial)_{\partial s}$
is canonically isomorphic to the symmetric algebra of the free $(F_{0})_{\partial s}$-submodule
$F_{1}A_{\partial s}\cong(F_{0})_{\partial s}\cdot s\oplus(F_{0})_{\partial s}$
of rank $2$ of $A_{\partial s}$. This yields an isomorphism of graded
algebras
\[
(R(A,\partial)_{\partial s},R(\partial))\cong((F_{0})_{\partial s}[s,\upsilon],\tfrac{\partial}{\partial s})
\]
where $s$ and $\upsilon$ are homogeneous of degree $1$.
\end{rem}

\subsection{Rees algebras and equivariant birational morphisms}

We now consider the behavior of Rees algebras under certain equivariant
birational morphisms between affine varieties. Let $(A,\partial)$
be an integral $k$-algebra endowed with a non-zero locally nilpotent
$k$-derivation and let $A'\subset\mathrm{Frac}(A)$ be its normalization,
i.e. its integral closure in its field of fraction $\mathrm{Frac}(A)$.
By results of Seidenberg and Vasconcelos (see e.g. \cite[Proposition 1.2.15 and Proposition 1.3.37]{vdEBook}),
there exists a unique extension of $\partial$ to a locally nilpotent
$k$-derivation $\partial'$ of $A'$.
\begin{lem}
\label{lem:Normalization-Commute-Rees}With the above notation, the
Rees algebra $R(A',\partial')$ is the normalization of the Rees algebra
$R(A,\partial)$. Furthermore, the unique extension to $R(A',\partial')$
of the canonical homogeneous locally nilpotent $k$-derivation $R(\partial)$
of $R(A,\partial)$ coincides with the canonical homogeneous locally
nilpotent $k$-derivation $R(\partial')$ of $R(A',\partial')$. 
\end{lem}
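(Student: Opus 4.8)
The plan is to establish the two assertions separately, using the identification $R(A,\partial)\subset A[\upsilon]$ as the graded subalgebra $\bigoplus_n F_n\upsilon^n$ from subsection \ref{subsec:Rees-Graded-mor}. First I would verify that $R(A',\partial')$ is integral over $R(A,\partial)$. Since $A'$ is generated as an $A$-module by finitely many elements integral over $A$, and the extension $\partial'$ satisfies $\partial'(F'_n)\subseteq F'_{n-1}$ where $F'_n=\mathrm{Ker}(\partial')^{n+1}$, each homogeneous element $a\upsilon^n$ with $a\in F'_n$ is integral over $R(A,\partial)$: indeed $a$ is integral over $A$, hence satisfies a monic relation $a^m+c_{m-1}a^{m-1}+\cdots+c_0=0$ with $c_i\in A$; multiplying the $i$-th term by $\upsilon^{ni}$ and checking that $\partial^{(n-i)\text{-th power}}$ annihilates $c_i a^i$ appropriately — more carefully, one uses that $A'[\upsilon^{\pm1}]\cong R(A',\partial')[\upsilon^{-1}]$ by the analogue of Lemma \ref{lem:upsilon-inversion}, so $R(A',\partial')$ and $R(A,\partial)$ have the same fraction field, and $R(A',\partial')$ is a finitely generated module over $R(A,\partial)$ because $A'$ is finitely generated over $A$ and the $\upsilon$-grading is compatible with the filtrations. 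This shows $R(A',\partial')$ lies between $R(A,\partial)$ and its normalization.

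Next I would check that $R(A',\partial')$ is integrally closed. The key point is that $R(A',\partial')[\upsilon^{-1}]\cong A'[\upsilon^{\pm1}]$ is integrally closed (as $A'$ is normal and $k[\upsilon^{\pm1}]$ is a localization of a polynomial ring), and that $R(A',\partial')$ is the subring of $\upsilon$-degree-$\geq0$... more precisely, one recovers $R(A',\partial')$ from $A'[\upsilon^{\pm1}]$ together with the subring $A'$ sitting in degree $0$ via the condition that the degree-$n$ part consists of elements $a\upsilon^n$ with $a\in\mathrm{Ker}(\partial')^{n+1}$, i.e. with $\tilde{\partial'}^{\,n+1}(a\upsilon^n)=0$. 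So $R(A',\partial')=\{x\in A'[\upsilon^{\pm1}] : x \text{ is a polynomial in }\upsilon\text{ whose degree-}n\text{ coefficient lies in }F'_n\}$, and this is an intersection inside $A'[\upsilon^{\pm1}]$ of the polynomial ring $A'[\upsilon]$ (integrally closed, since $A'$ is) with the conditions cut out by the locally nilpotent derivation, each of which is a kernel condition. One then argues: if $z\in\mathrm{Frac}(R(A,\partial))=\mathrm{Frac}(R(A',\partial'))$ is integral over $R(A',\partial')$, then $z$ is integral over $A'[\upsilon^{\pm1}]$ hence lies in it; being integral over the graded ring $R(A',\partial')$ it is a polynomial in $\upsilon$ with coefficients integral over $A'$, hence in $A'$; and the homogeneous components of $z$ are each integral over $R(A',\partial')$, so writing $z=\sum a_n\upsilon^n$ it suffices to see $a_n\in F'_n$, which follows because $a_n\upsilon^n$ integral over $R(A',\partial')\subset A'[\upsilon]$ forces a monic equation whose reduction modulo the condition "$\tilde{\partial'}$ raised to a power kills it" is inherited. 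I would instead phrase this cleanly: $R(A',\partial')$ equals the intersection of $A'[\upsilon]$ with $R(A,\partial)[\upsilon^{-1}]\cdot(\text{something})$ — actually the cleanest route is to note $R(A',\partial')=A'[\upsilon]\cap R(A,\partial)[\upsilon^{-1}]$ inside $A'[\upsilon^{\pm1}]$, using Lemma \ref{lem:upsilon-inversion}, since an element of $A'[\upsilon]$ lies in $R(A',\partial')$ iff it lies in $R(A,\partial)[\upsilon^{-1}]$ (both conditions say the degree-$n$ coefficient is killed by $\partial^{n+1}$, the left using $\partial'$ which agrees with $\partial$ on $A$-integral elements). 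An intersection of two integrally closed subrings of a common field is integrally closed, which finishes it.

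For the derivation statement, by Seidenberg--Vasconcelos there is a \emph{unique} locally nilpotent $k$-derivation of $R(A',\partial')$ extending $R(\partial)$ on $R(A,\partial)$; so it is enough to check that $R(\partial')$ is such an extension. That $R(\partial')$ restricts to $R(\partial)$ on $R(A,\partial)$ is immediate from the definitions, since $\partial'|_A=\partial$ and $R(\partial')|_{F'_n}=\partial'$, $R(\partial)|_{F_n}=\partial$ with $F_n\subseteq F'_n$. That $R(\partial')$ is locally nilpotent and homogeneous of degree $-1$ is exactly the general construction recalled just before Lemma \ref{lem:upsilon-inversion} applied to $(A',\partial')$. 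Hence $R(\partial')$ is the unique extension, as claimed.

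The main obstacle I anticipate is the integrally-closed part: one must be careful that the extended derivation $\partial'$ is genuinely the "same" one appearing in both $R(A',\partial')$ and any putative extension of $R(\partial)$, and that the degree-wise kernel conditions defining $R(A',\partial')$ inside $A'[\upsilon]$ really are compatible with those defining $R(A,\partial)$ inside $A[\upsilon]$ after inverting $\upsilon$. The identity $R(A',\partial')=A'[\upsilon]\cap R(A,\partial)[\upsilon^{-1}]$ is the crux; once it is in place, integral closedness of $A'[\upsilon]$ (from normality of $A'$) and of $R(A,\partial)[\upsilon^{-1}]\cong A[\upsilon^{\pm1}]$ — wait, that last ring is \emph{not} normal since $A$ need not be; so instead one uses $R(A',\partial')[\upsilon^{-1}]\cong A'[\upsilon^{\pm1}]$ which \emph{is} normal, giving $R(A',\partial')=A'[\upsilon]\cap A'[\upsilon^{\pm1}]$ trivially, and the content is rather that $R(A',\partial')$ is \emph{module-finite} over $R(A,\partial)$ with the same fraction field. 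So the real work is the finiteness, which I would extract from finiteness of $A'$ over $A$ together with a grading/filtration bookkeeping argument showing $\bigoplus_n F'_n\upsilon^n$ is generated over $\bigoplus_n F_n\upsilon^n$ by the finitely many module generators of $A'$ over $A$ placed in suitable degrees.
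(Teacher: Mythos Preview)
Your overall architecture matches the paper's: use Lemma~\ref{lem:upsilon-inversion} to see that $R=R(A,\partial)$ and $R'=R(A',\partial')$ have the same fraction field, observe that every homogeneous element of $R'$ lies in $A'$ hence is integral over $A\subset R$ (so $R'$ sits inside the normalization of $R$), and then prove that $R'$ is itself normal. Your treatment of the derivation statement is correct and is exactly what the paper does.

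The gap is in the normality of $R'$. Your first sketch is on the right track --- reduce to a homogeneous element $a_n\upsilon^n\in A'[\upsilon]$ integral over $R'$ and show $a_n\in F'_n$ --- but you stop at ``forces a monic equation whose reduction \ldots\ is inherited'', which is not an argument. You then abandon this for the identity $R'=A'[\upsilon]\cap R[\upsilon^{-1}]$, correctly notice it fails (since $R[\upsilon^{-1}]=A[\upsilon^{\pm1}]$, the intersection is $A[\upsilon]$, not $R'$), and finally conclude that ``the real work is the finiteness''. That is a misdiagnosis: module-finiteness of $R'$ over $R$ (which you do not prove, and which is harder than needed --- elementwise integrality suffices for the inclusion $R'\subseteq \overline{R}$) says nothing about $R'$ being integrally closed.

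The paper closes exactly the gap you left. After reducing to $R'$ being integrally closed in $A'[\upsilon]$ and to homogeneous elements (via the Bourbaki fact that the integral closure of a graded subring in a graded ring is graded), take $g=h\upsilon^n$ with a homogeneous monic relation $g^m+\sum_{i<m}a_ig^i=0$, $a_i\in R'$. Homogeneity forces $a_i=b_{(m-i)n}\upsilon^{(m-i)n}$ with $b_{(m-i)n}\in F'_{(m-i)n}$, so $h^m+\sum_{i<m} b_{(m-i)n}h^i=0$ in $A'$. If $n=0$, this says $h$ is integral over $A'_0$, and $A'_0=\mathrm{Ker}\,\partial'$ is integrally closed in $A'$. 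If $n\geq 1$ and $h\in F'_d\setminus F'_{d-1}$ with $d>n$, then $h^m\in F'_{md}\setminus F'_{md-1}$, while each $b_{(m-i)n}h^i\in F'_{(m-i)n+id}\subseteq F'_{md-1}$ since $(m-i)n+id\leq md-1$ for $i\leq m-1$; this contradicts the relation. Hence $d\leq n$, i.e.\ $h\in F'_n$ and $g\in R'$. This filtration-degree comparison is the missing idea in your sketch.
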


\begin{proof}
Let $\{F_{n}\}_{n\geq0}$ and $\{F_{n}'\}_{n\geq0}$ be the ascending
filtrations of $A$ and $A'$ associated to $\partial$ and $\partial'$
respectively. Let $R=\bigoplus_{n\geq0}F_{n}\upsilon^{n}\cong R(A,\partial)$
and $R'=\bigoplus_{n\geq0}F_{n}'\upsilon^{n}\cong R(A',\partial')$.
Since $F_{n}=F_{n}'\cap A$ by construction of $\partial'$, we have
the following commutative diagram of inclusions \[\xymatrix {R \ar[d] \ar[r] & R' \ar[d] \\ A[\upsilon] \ar[r] & A'[\upsilon].}\]
By Lemma \ref{lem:upsilon-inversion}, we have $R[\upsilon^{-1}]=A[\upsilon^{\pm1}]$
and $R'[\upsilon^{-1}]=A'[\upsilon^{\pm1}]$, so that $R$ and $R'$
have the same field of fractions. The normalization of $R$ is thus
contained in that of $R'$, and since on the other hand every homogeneous
element $x'\in F_{n}'\subset A'$ is integral over $A$, $R'$ is
contained in the normalization of $R$. It is thus enough to show
that $R'$ is normal. If $h\in\mathrm{Frac}(R')\cong\mathrm{Frac}(A')(\upsilon)$
is integral over $R'$ then it is also integral over $R'[\upsilon^{-1}]\cong A'[\upsilon^{\pm1}]$,
hence belongs to this algebra, $A'[\upsilon^{\pm1}]$ being normal
as $A'$ is normal. It follows that $h=\upsilon^{-\ell}g$ for some
$g\in A'[\upsilon]$ which is integral over $R'$, and it remains
to prove that $R'$ is integrally closed in $A'[\upsilon]$.

Since the inclusion $R'\hookrightarrow A'[\upsilon]$ is a graded
homomorphism, the integral closure of $R'$ in $A'[\upsilon]$ is
a graded $R'$-subalgebra of $A'[\upsilon]$ \cite[§1.8 Proposition 20]{Bour}.
As a consequence, $R'$ is integrally closed in $A'[\upsilon]$ if
and only if every homogeneous element $g=h\upsilon^{n}\in A'[\upsilon]$
which is integral over $R'$ belongs to $R'$. Let $g^{m}+\sum_{i=0}^{m-1}a_{i}g^{i}=0$
be a homogeneous integral dependence relation with coefficients in
$R'$. Since $g^{i}=h^{i}\upsilon^{ni}$, $a_{i}$ is homogeneous
of degree $(m-i)n$, hence is of the form $b_{(m-i)n}\upsilon^{(m-i)n}$
for some $b_{(m-i)n}\in F'_{(m-i)n}$. This implies that the relation
$h^{m}+\sum_{i=0}^{m-1}b_{(m-i)n}h^{i}=0$ holds in $A'$. If $n=0$,
then $h\in A'$ is integral over $A_{0}'$, hence belongs to $A_{0}'$
since the latter is integrally closed in $A'$ by \cite[Proposition 1.13]{FreuBook}.
If $n\geq1$, then by definition $g\in R'$ if and only if $h\in F_{n}'$.
So suppose that $h\in F_{d}'\setminus F_{d-1}'$ for some $d>n$.
Then $h^{m}\in F_{md}'\setminus F_{md-1}'$ but on the other hand
$\sum_{i=0}^{m-1}b_{(m-i)n}h^{i}\in F'_{dm-1}$ as $F'_{(m-i)n+di}=F'_{mn+(d-n)i}$
is contained $F'_{dm-1}$ for every $i=0,\ldots,m-1$. This is absurd,
so $h\in F_{n}'$ and then $g\in R'$.
\end{proof}
\begin{cor}
Let $A$ be an integral normal $k$-algebra. Then for every nonzero
locally nilpotent $k$-derivation $\partial$ of $A$, the Rees algebra
$R(A,\partial)$ is integral and normal.
\end{cor}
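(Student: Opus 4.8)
The plan is to deduce this directly from Lemma \ref{lem:Normalization-Commute-Rees}, which has already done all the real work. First I would dispose of integrality: as observed just before Lemma \ref{lem:upsilon-inversion}, the Rees algebra $R(A,\partial)$ is identified with the graded $A_0[\upsilon]$-subalgebra $\bigoplus_{n\geq0}F_n\upsilon^n$ of $A[\upsilon]$, and since $A$ is a domain so is $A[\upsilon]$, hence so is any subalgebra. This gives integrality with no further argument.

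For normality, the key observation is that when $A$ is already normal, its integral closure $A'$ in $\mathrm{Frac}(A)$ is $A$ itself, and by the uniqueness of the locally nilpotent extension (the Seidenberg--Vasconcelos result quoted before Lemma \ref{lem:Normalization-Commute-Rees}) the extension $\partial'$ of $\partial$ to $A'=A$ is simply $\partial$. Therefore $R(A',\partial')=R(A,\partial)$. But Lemma \ref{lem:Normalization-Commute-Rees} asserts precisely that $R(A',\partial')$ is the normalization of $R(A,\partial)$; so $R(A,\partial)$ coincides with its own normalization, which is to say it is integrally closed in its fraction field, i.e. normal.

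I do not anticipate any obstacle here — this is a one-line consequence once Lemma \ref{lem:Normalization-Commute-Rees} is in place, and the only thing to be careful about is to note explicitly the two inputs (integrality of $R(A,\partial)$ as a subalgebra of the domain $A[\upsilon]$, and the identification $A'=A$, $\partial'=\partial$ in the normal case). If one wanted a self-contained normality argument instead of invoking the lemma wholesale, one could rerun the final paragraph of the proof of Lemma \ref{lem:Normalization-Commute-Rees} with $A'$ replaced by $A$: use Lemma \ref{lem:upsilon-inversion} to reduce to showing $R(A,\partial)$ is integrally closed in $A[\upsilon]$, invoke \cite[\S1.8 Proposition 20]{Bour} to reduce to homogeneous integral elements $g=h\upsilon^n$, and derive a contradiction from comparing filtration degrees in the relation $h^m+\sum b_{(m-i)n}h^i=0$. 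But invoking the lemma directly is cleaner and is the route I would take.

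\begin{proof}
Since $A$ is an integral domain, so is the polynomial ring $A[\upsilon]$, and hence so is its subalgebra $R(A,\partial)\cong\bigoplus_{n\geq0}F_n\upsilon^n$. Thus $R(A,\partial)$ is integral. As $A$ is normal, its integral closure in $\mathrm{Frac}(A)$ is $A$ itself, and by uniqueness of the locally nilpotent extension the derivation $\partial'$ of this closure is $\partial$; therefore $R(A',\partial')=R(A,\partial)$. By Lemma \ref{lem:Normalization-Commute-Rees}, $R(A',\partial')$ is the normalization of $R(A,\partial)$, so $R(A,\partial)$ is equal to its own normalization, i.e. it is normal.
\end{proof}
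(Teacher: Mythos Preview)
Your proposal is correct and is exactly the intended deduction: the paper states this corollary without proof, treating it as immediate from Lemma~\ref{lem:Normalization-Commute-Rees} (together with the integrality already noted before Lemma~\ref{lem:upsilon-inversion}). Your written-out argument is precisely the one-line specialization $A'=A$, $\partial'=\partial$ that the paper leaves implicit.
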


Let $(A,\partial)$ be an integral $k$-algebra endowed with a non-zero
locally nilpotent $k$-derivation, let $I\subset A$ be a $\partial$-invariant
ideal and $f\in I$ be a $\partial$-invariant element, so that $\partial f=0$
by \cite[Corollary 1.23]{FreuBook}. Let $\tilde{\partial}$ be the
locally nilpotent $k[t]$-derivation of $A[t]$ of degree $0$ defined
by $\tilde{\partial}(\sum a_{i}t^{i})=\sum\partial(a_{i})t^{i}$ and
let $\overline{\partial}$ be the locally nilpotent $k$-derivation
of $A[t]/(1-ft)$ that it induces. Since $\partial I\subset I$, $\overline{\partial}$
restricts to a locally nilpotent $k$-derivation of the integral $k$-algebra
\[
A[I/f]=(\bigoplus_{n\geq0}I^{n}t^{n})/(1-ft)\subseteq A[t]/(1-ft)\cong A[f^{-1}]
\]
which we denote by $\partial'$. The natural inclusion $A\hookrightarrow A[I/f]$
induces an isomorphism of $k$-algebras $A[f^{-1}]\cong A[I/f][f^{-1}]$.
Furthermore, $A$ is a $\partial'$-invariant subalgebra of $A[I/f]$
and the restriction of $\partial'$ to $A$ is equal to $\partial$.
Following \cite{KaZa99}, we call the pair $(A[I/f],\partial')$ the
\emph{equivariant affine modification of} $(A,\partial)$ with center
at the $\partial$-invariant ideal $I$ and $\partial$-invariant
divisor $f$. 
\begin{lem}
With the above notation, the pair $(R(A[I/f],\partial'),R(\partial'))$
is isomorphic to the equivariant affine modification $(R(A,\partial)[J/f],R(\partial)')$
of $(R(A,\partial),R(\partial))$ with center at the $R(\partial)$-invariant
homogeneous ideal $J\subset R(A,\partial)$ generated by the elements
of $I$ and with $R(\partial)$-invariant divisor $f$.
\end{lem}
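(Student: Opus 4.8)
The plan is to realize both pairs as sub-pairs of $(R(A,\partial)[f^{-1}],R(\partial))$ and to check that the underlying graded subalgebras coincide. Since $\partial f=0$ we have $f\in F_{0}=\mathrm{Ker}\partial$, so $f$ is a homogeneous element of degree $0$ of $R(A,\partial)$ annihilated by $R(\partial)$, and $f\in I\cap F_{0}$; writing each $a\in I$ in degree equal to its filtration degree, the homogeneous ideal of $R(A,\partial)$ generated by the elements of $I$ is $J=\bigoplus_{n\geq0}(I\cap F_{n})\upsilon^{n}$, which is $R(\partial)$-invariant because $\partial I\subseteq I$ and $\partial F_{n}\subseteq F_{n-1}$, and which contains $f$ in degree $0$. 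First I would record that localization at the invariant element $f$ commutes with forming Rees algebras, so that $R(A,\partial)[f^{-1}]=R(A[f^{-1}],\partial)=\bigoplus_{n}(F_{n})_{f}\upsilon^{n}$, and that the canonical extension of $R(\partial)$ to this algebra restricts on the one hand to $R(\partial')$ on $R(A[I/f],\partial')$ — because $\partial'$ is the restriction of $\partial$ to the $\partial$-invariant subalgebra $A[I/f]\subseteq A[f^{-1}]$ — and on the other hand to $R(\partial)'$ on $R(A,\partial)[J/f]$ — by the very definition of the equivariant affine modification, $R(\partial)$ annihilating $f$. Hence it suffices to prove the equality of subalgebras $R(A[I/f],\partial')=R(A,\partial)[J/f]$ inside $R(A,\partial)[f^{-1}]$.

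The inclusion $R(A,\partial)[J/f]\subseteq R(A[I/f],\partial')$ is immediate: $R(A,\partial)=\bigoplus F_{n}\upsilon^{n}$ maps into $R(A[I/f],\partial')=\bigoplus F_{n}(A[I/f])\upsilon^{n}$ because $F_{n}\subseteq F_{n}(A[I/f])$, and for $a\in I\cap F_{n}$ one has $a/f\in A[I/f]$ with $(\partial')^{n+1}(a/f)=\partial^{n+1}(a)/f=0$, so $(a/f)\upsilon^{n}\in R(A[I/f],\partial')$; since $R(A,\partial)[J/f]$ is generated over $R(A,\partial)$ by $J/f$, this inclusion follows. For the reverse inclusion I would reduce to a statement internal to $R(A,\partial)$. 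Using $F_{n}(A[I/f])=\bigcup_{M}(I^{M}\cap F_{n})f^{-M}$ and $R(A,\partial)[J/f]=\bigcup_{M}J^{M}f^{-M}$ and comparing homogeneous components, it suffices to prove that $J^{M}=\bigoplus_{n\geq0}(I^{M}\cap F_{n})\upsilon^{n}$ for every $M\geq0$, equivalently that each homogeneous ideal $J^{M}$ is saturated with respect to $\upsilon$, equivalently the ideal-theoretic identity $I^{M}\cap F_{n}=\sum_{k_{1}+\cdots+k_{M}=n}(I\cap F_{k_{1}})\cdots(I\cap F_{k_{M}})$ for all $M,n$. One inclusion here is formal (from $F_{k_{1}}\cdots F_{k_{M}}\subseteq F_{n}$ and $I^{M}\subseteq I$); the other is the heart of the matter, and it genuinely uses both the local nilpotency of $\partial$ and the $\partial$-invariance of $I$, since for a general ideal of $A[\upsilon]$ the formation of powers does not commute with contraction to $R(A,\partial)$.

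To establish $I^{M}\cap F_{n}\subseteq\sum_{k_{1}+\cdots+k_{M}=n}(I\cap F_{k_{1}})\cdots(I\cap F_{k_{M}})$, the route I favour is to localize at a pre-slice. Choosing $s\in F_{1}\setminus F_{0}$ with $\partial s\neq0$, Lemma \ref{lem:Rees-preslice} and \cite[Principle 11]{FreuBook} identify $A_{\partial s}$ with a polynomial ring in one variable over $(F_{0})_{\partial s}$ carrying the restriction of $\partial$; as $\partial$ has characteristic zero, every $\partial$-invariant ideal of this ring is extended from $(F_{0})_{\partial s}$, so $I_{\partial s}=I_{0}\cdot A_{\partial s}$ with $I_{0}=I_{\partial s}\cap(F_{0})_{\partial s}$, and a direct inspection then gives $I_{\partial s}^{M}\cap(F_{n})_{\partial s}=\sum_{k_{1}+\cdots+k_{M}=n}(I_{\partial s}\cap(F_{k_{1}})_{\partial s})\cdots(I_{\partial s}\cap(F_{k_{M}})_{\partial s})$. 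Thus the $F_{0}$-module $(I^{M}\cap F_{n})\big/\sum_{k_{1}+\cdots+k_{M}=n}\prod_{i}(I\cap F_{k_{i}})$ vanishes after inverting $\partial s$ for every pre-slice $s$, hence (since $\partial\neq0$) is supported on the proper closed subscheme cut out by the plinth ideal $\partial(F_{1})$, and one must conclude that it is zero. A more hands-on alternative is to apply the co-morphism $\mu^{*}=\exp(t\partial)\colon A\hookrightarrow A[t]$: an element $b\in I^{M}\cap F_{n}$ gives $\mu^{*}(b)=\sum_{\gamma}\mu^{*}(a_{1}^{\gamma})\cdots\mu^{*}(a_{M}^{\gamma})$ with $a_{i}^{\gamma}\in I$, where $\mu^{*}(b)$ has $t$-degree $\leq n$ while, $A[t]$ being a domain, each summand has $t$-degree exactly $\sum_{i}\deg_{t}\mu^{*}(a_{i}^{\gamma})$; one then removes the summands of top $t$-degree one at a time by cancelling their leading coefficients (which lie in $F_{0}$, hence remain available as multipliers of factors from $I$), until every summand has $t$-degree $\leq n$, at which point $b$ visibly lies in the right-hand side. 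Making either of these reductions fully rigorous — the globalization from the pre-slice localizations in the first, or the termination of the leading-term reduction in the second — is what I expect to be the main obstacle.

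Once $J^{M}=\bigoplus_{n}(I^{M}\cap F_{n})\upsilon^{n}$ is known for all $M$, the two graded subalgebras of $R(A,\partial)[f^{-1}]$ coincide, and since $R(\partial')$ and $R(\partial)'$ are both obtained from the canonical extension of $R(\partial)$ by restriction, the induced isomorphism $R(A[I/f],\partial')\cong R(A,\partial)[J/f]$ is an isomorphism of graded $k$-algebras carrying $R(\partial')$ to $R(\partial)'$; that the center of the affine modification on the right is $J$ and its divisor is $f$ is built into that identification.
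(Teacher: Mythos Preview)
Your overall strategy coincides with the paper's: embed both $R(A[I/f],\partial')$ and $R(A,\partial)[J/f]$ into $R(A,\partial)_{f}$ via the localization maps and compare their images. The paper does this in one line, asserting that the image of $R(A[I/f],\partial')$ consists of the elements $f^{-\ell}g$ with $g\in I^{\ell}R(A,\partial)=J^{\ell}$, and then identifying this with $R(A,\partial)[J/f]$. You go further and correctly isolate the content of that assertion: one needs $(J^{M})_{n}=(I^{M}\cap F_{n})\upsilon^{n}$, equivalently
\[
I^{M}\cap F_{n}\;=\;\sum_{k_{1}+\cdots+k_{M}=n}(I\cap F_{k_{1}})\cdots(I\cap F_{k_{M}})\qquad(M,n\geq0).
\]
You are also right that neither of your two suggested approaches makes this step rigorous.

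The reason they cannot be completed is that this identity is \emph{false}, and with it the lemma as stated. Take $A=k[x,y,z]$ with the triangular derivation $\partial=x\,\partial_{y}+y\,\partial_{z}$, so that $F_{0}=\ker\partial=k[x,w]$ with $w=2xz-y^{2}$, and let $I=(x,y,z)$ and $f=x$. Then $I$ is $\partial$-invariant and $f\in I\cap F_{0}$, yet $w\in I^{2}\cap F_{0}$ while $(I\cap F_{0})^{2}=(x,w)^{2}\subset k[x,w]$ does not contain $w$. This discrepancy survives the union over powers of $f$: on the affine-modification side $A[I/f]=k[x,y',z']$ with $y'=y/x$, $z'=z/x$ and $\partial'(y')=1$, so $\ker\partial'=k[x,\,w/x^{2}]$; on the other side $(R(A,\partial)[J/f])_{0}=\bigcup_{\ell}x^{-\ell}(x,w)^{\ell}=k[x,\,w/x]$. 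Since $w/x^{2}\notin k[x,w/x]$, the two graded algebras already differ in degree $0$. So the step you flagged as ``the main obstacle'' is not merely a missing argument but a genuine obstruction; the paper's proof glosses over exactly this point, and your localization argument cannot be globalized precisely because the conclusion fails away from the pre-slice locus.
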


\begin{proof}
Every element $h$ of $J$ is a finite sum $h=\sum h_{i}f_{i}$ where
$h_{i}\in I$ and 
\[
f_{i}=\sum f_{ij}\in\bigoplus_{n\geq0}F_{n}=R(A,\partial).
\]
Since $A=\bigcup_{n\geq0}F_{n}$, each $h_{i}$ is homogeneous of
a certain degree when viewed as an element of $R(A,\partial)$. Since
$h_{i}f_{ij}\in I$ for every $i,j$ and $I$ is $\partial$-stable,
it follows that $J$ is an $R(\partial)$-stable homogeneous ideal
of $R(A,\partial)$. Viewing $f$ as a homogeneous element of degree
$0$ in $R(A[I/f],\partial')$, the image of $R(A[I/f],\partial')$
by the injective homogeneous localization homomorphism 
\[
R(A[I/f],\partial')\rightarrow R(A[I/f],\partial')[f^{-1}]=R(A_{f},\partial')=R(A_{f},\partial)=R(A,\partial)_{f}
\]
is equal to the graded subalgebra of $R(A_{f},\partial)$ whose elements
have the form $f^{-\ell}\sum g_{i}$ where $\sum g_{i}\in I^{\ell}R(A,\partial)=J^{\ell}$.
On the other hand, it follows from the definition of $R(A,\partial)[J/f]$
that this sub-algebra is the image of $R(A,\partial)[J/f]$ by the
injective homogeneous localization homomorphism 
\[
R(A,\partial)[J/f]\rightarrow R(A,\partial)[J/f][f^{-1}]=R(A,\partial)_{f}.
\]
The equivariance then follows readily from the construction of the
$k$-derivations $R(\partial')$ and $R(\partial)'$.
\end{proof}
\begin{cor}
Let $A$ be a finitely generated $k$-algebra endowed with a nonzero
locally nilpotent $k$-derivation $\partial$. If $R(A,\partial)$
is a finitely generated $k$-algebra, then so is $R(A[I/f]),\partial')$
for every equivariant affine modification $A[I/f]$ of $A$.
\end{cor}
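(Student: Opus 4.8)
The plan is to reduce everything to the preceding lemma, which identifies $(R(A[I/f],\partial'),R(\partial'))$ with the equivariant affine modification $(R(A,\partial)[J/f],R(\partial)')$ of $(R(A,\partial),R(\partial))$, where $J\subset R(A,\partial)$ is the $R(\partial)$-invariant homogeneous ideal generated by the elements of $I$ and $f$ is viewed as a homogeneous element of degree $0$. Granting this identification, it suffices to prove the purely ring-theoretic statement that $R(A,\partial)[J/f]$ is a finitely generated $k$-algebra whenever $B:=R(A,\partial)$ is, and the compatibility of $R(\partial')$ with $R(\partial)'$ recorded in that lemma plays no role here.

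First I would observe that a finitely generated $k$-algebra is Noetherian by the Hilbert basis theorem, so $B$ is Noetherian and the ideal $J\subset B$ is finitely generated, say $J=(a_{1},\dots,a_{r})$. Then the Rees algebra $\bigoplus_{n\geq 0}J^{n}T^{n}$ of $J$ over $B$ is generated as a $B$-algebra by $a_{1}T,\dots,a_{r}T$, since every element of $J^{n}$ is a $B$-linear combination of $n$-fold products of the $a_{i}$; hence $\bigoplus_{n\geq 0}J^{n}T^{n}$ is a finitely generated $k$-algebra. By definition $R(A,\partial)[J/f]=\bigl(\bigoplus_{n\geq 0}J^{n}T^{n}\bigr)/(1-fT)$ is a quotient of this algebra, so it too is a finitely generated $k$-algebra. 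Combining this with the isomorphism $R(A[I/f],\partial')\cong R(A,\partial)[J/f]$ supplied by the preceding lemma yields the corollary. (Note that one cannot simply argue that $R(A,\partial)[J/f]$ is a $k$-subalgebra of the finitely generated algebra $R(A,\partial)_{f}$, since subalgebras of finitely generated algebras need not be finitely generated; the Rees-algebra presentation above is what makes the argument work.)

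The only point that requires a little care — and which I would regard as the main, and rather mild, obstacle — is making sure that the center $J$ of the modification of $B$ really is a finitely generated ideal. The cleanest route is the one above: invoke the Hilbert basis theorem for $B$ and be done. If one prefers explicit generators, note that $A\cong B/(1-\upsilon)B$ is itself finitely generated (hence Noetherian), so $I=(g_{1},\dots,g_{m})$ for some $g_{j}$; each $g_{j}$ lies in some $F_{d_{j}}=\mathrm{Ker}\,\partial^{d_{j}+1}$ and contributes a homogeneous element $g_{j}\upsilon^{d_{j}}\in B$, and using $\mathcal{F}_{p}\cdot\mathcal{F}_{q}\subseteq\mathcal{F}_{p+q}$ one checks that these elements together with $\upsilon$ generate $J$. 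Either way, no characteristic-zero input beyond what is already built into the construction of equivariant affine modifications and of $R(\partial)$, $R(\partial')$ is needed.
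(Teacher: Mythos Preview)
Your main argument is correct and is exactly the paper's approach: invoke the preceding lemma to identify $R(A[I/f],\partial')$ with $R(A,\partial)[J/f]$, use that $R(A,\partial)$ is Noetherian so $J$ is finitely generated, and conclude that the affine modification is of finite type. You spell out the last implication via the Rees presentation $\bigoplus_{n\geq 0}J^{n}T^{n}$, which the paper leaves implicit, but the substance is identical.

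One small slip in your optional final paragraph: the explicit generators $g_{j}\upsilon^{d_{j}}$ already suffice to generate $J$, and $\upsilon$ should \emph{not} be thrown in. Indeed $\upsilon$ corresponds to $1\in F_{1}$, which lies in $I$ only when $I=A$; in general $\upsilon\notin J$. The check you indicate via $F_{p}\cdot F_{q}\subseteq F_{p+q}$ goes through without it: any $h\in I$ written as $\sum a_{i}g_{i}$ with $a_{i}\in F_{e_{i}}$ gives $h\upsilon^{e_{i}+d_{i}}=(a_{i}\upsilon^{e_{i}})(g_{i}\upsilon^{d_{i}})$, and higher-degree placements of $h$ are obtained by multiplying by powers of $\upsilon\in B$, which is a ring element, not an ideal generator. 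This does not affect your proof, since your primary route through the Hilbert basis theorem is clean and complete.
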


\begin{proof}
Indeed, if $R(A,\partial)$ is of finite type over $k$, then $J=IR(A,\partial)$
is a finitely generated ideal, which implies in turn that $R(A[I/f],\partial')\cong(R(A,\partial)[J/f]$
is of finite type over $k$.
\end{proof}

\subsection{Finitely generated Rees algebras}

It is well known that the ring of invariants of a $\mathbb{G}_{a}$-action
on an affine $k$-variety $X=\mathrm{Spec}(A)$ is in general not
finitely generated (see e.g. \cite[Chapter 7]{FreuBook} for a survey).
As a consequence, the Rees algebra $R(A,\partial)$ as well as the
associated graded algebra $\mathrm{gr}_{\partial}A$ of an integral
$k$-algebra of finite type $A$ endowed with a non-zero locally nilpotent
$k$-derivation are in general not finitely generated. Our aim in
this subsection is to give an algebro-geometric construction of all
pairs $(A,\partial)$ consisting of a $k$-algebra of finite type
and a locally nilpotent $k$-derivation of $A$ for which the Rees
algebra $(A,\partial)$ is finitely generated. Since normalization
is a finite morphism, it follows from the Artin-Tate lemma that a
$k$-algebra is finitely generated if and only its normalization is
finitely generated. By Lemma \ref{lem:Normalization-Commute-Rees},
we can thus restrict without loss of generality to the case of normal
$k$-algebras.
\begin{lem}
\label{lem:Equiv-charac-Rees-Finite-Gen}Let $(A,\partial)$ be an
integral normal $k$-algebra of finite type endowed with a locally
nilpotent $k$-derivation $\partial$, let $\{F_{n}\}_{n\geq0}$ be
the associated ascending filtration and let $R(A,\partial)=\bigoplus_{n\geq0}F_{n}$
be its Rees algebra. Then the following are equivalent:

1) The algebra $R(A,\partial)$ is finitely generated over $k$,

2) The associated graded algebra $\mathrm{gr}_{\partial}A$ is finitely
generated over $k$,

3) The $k$-algebra $A_{0}=F_{0}=\mathrm{ker}\partial$ is finitely
generated and $R(A,\partial)_{+}=\bigoplus_{n>0}F_{n}$ is a finitely
generated $R(A,\partial)$-module.
\end{lem}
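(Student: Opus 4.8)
The plan is to prove the equivalence of (1)--(3) by reducing everything to a single standard fact about non-negatively graded rings, together with the canonical graded isomorphism $\mathrm{gr}_\partial A\cong R(A,\partial)/\upsilon R(A,\partial)$ (cf.\ Section~\ref{subsec:Rees-Graded-mor}). Throughout I would write $R=R(A,\partial)=\bigoplus_{n\geq0}F_n\upsilon^n\subseteq A[\upsilon]$, so that $R$ is $\mathbb{Z}_{\geq0}$-graded with degree-zero part $R_0=F_0=A_0=\mathrm{Ker}\,\partial$, irrelevant ideal $R_+=\bigoplus_{n>0}F_n\upsilon^n$ with $R/R_+\cong A_0$, and distinguished homogeneous element $\upsilon\in R_1\subseteq R_+$ with $\upsilon R=(\upsilon)$.

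First I would record two elementary facts about an arbitrary $\mathbb{Z}_{\geq0}$-graded ring $B=\bigoplus_{n\geq0}B_n$. (i) The ideal $B_+=\bigoplus_{n>0}B_n$ is finitely generated (equivalently, finitely generated as a $B$-module) if and only if $B$ is a finitely generated $B_0$-algebra; the nontrivial implication follows by induction on $n$, writing a homogeneous element of $B_n$ as a $B$-linear combination of fixed homogeneous positive-degree generators of $B_+$, extracting the degree-$n$ part, and invoking the inductive hypothesis on the (strictly lower degree) coefficients. (ii) If $B$ is a finitely generated $k$-algebra then so is $B_0=B/B_+$, being a quotient of it; conversely, if $B_0$ is a finitely generated $k$-algebra and $B$ is a finitely generated $B_0$-algebra, then $B$ is a finitely generated $k$-algebra. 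Combining (i) and (ii) gives: $B$ is a finitely generated $k$-algebra if and only if $B_0$ is and $B_+$ is a finitely generated ideal. Applied to $B=R$ this is precisely $(1)\Leftrightarrow(3)$. The implication $(1)\Rightarrow(2)$ is then immediate, since $\mathrm{gr}_\partial A\cong R/\upsilon R$ is a quotient of the finitely generated $k$-algebra $R$.

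It remains to prove $(2)\Rightarrow(3)$, which I expect to be the only implication with any content. Assuming $\mathrm{gr}_\partial A$ finitely generated over $k$, it is Noetherian, and its degree-zero part is $F_0=A_0$, which is therefore a finitely generated $k$-algebra by (ii); this is the first half of (3). For the second half, the graded surjection $R\twoheadrightarrow R/\upsilon R\cong\mathrm{gr}_\partial A$ carries $R_+$ onto the irrelevant ideal $(\mathrm{gr}_\partial A)_+$, which is finitely generated because $\mathrm{gr}_\partial A$ is Noetherian; lifting finitely many homogeneous generators to homogeneous elements $g_1,\dots,g_m\in R_+$ yields $R_+=(g_1,\dots,g_m)+\upsilon R=(g_1,\dots,g_m,\upsilon)$ since $\upsilon R=(\upsilon)$, so $R_+$ is a finitely generated ideal, equivalently a finitely generated $R$-module. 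This is (3), and together with $(3)\Rightarrow(1)$ it closes the cycle $(1)\Rightarrow(2)\Rightarrow(3)\Rightarrow(1)$. I do not anticipate a genuine obstacle here; the only point needing care is that (2) a priori controls $R$ only modulo $\upsilon$, so the generating set of $R_+$ obtained by lifting must be enlarged by $\upsilon$ itself --- harmless, since $\upsilon R=(\upsilon)$ is principal and contributes a single generator.
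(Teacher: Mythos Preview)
Your proof is correct and follows essentially the same route as the paper's: both establish the cycle $3)\Rightarrow 1)\Rightarrow 2)\Rightarrow 3)$, with the only substantive step being $2)\Rightarrow 3)$, handled in both cases by lifting homogeneous generators of $(\mathrm{gr}_\partial A)_+$ to $R_+$ and adjoining the single element $\upsilon$. The paper carries out this lift by an explicit degree-by-degree verification that $R_+$ equals the ideal $(\upsilon,\,a_i\upsilon^{d(i)})$, whereas you package the same computation as the standard graded Nakayama-type statement ``generators of $B_+/\upsilon B$ lift to generators of $B_+$ once $\upsilon$ is thrown in''; your additional observation that (i) and (ii) already give $(1)\Leftrightarrow(3)$ directly is a mild streamlining but not a different idea.
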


\begin{proof}
As in (\ref{eq:Graded-inclusions}), we identify $R(A,\partial)$
with the graded sub-$A_{0}$-algebra $\bigoplus_{n\geq0}F_{n}\upsilon^{n}$
of $A[\upsilon]$. The implication 3) $\Rightarrow$ 1) is straightforward
and the implication 1) $\Rightarrow$ 2) follows immediately from
the definition of $\mathrm{gr}_{\partial}A=R(A,\partial)/\upsilon R(A,\partial)$.
To show the implication 2) $\Rightarrow$ 3), we can assume without
loss of generality that $\mathrm{gr}_{\partial}A=k[b_{1},\ldots,b_{r}]$
for some nonzero homogeneous elements $b_{i}\in F_{d(i)}/F_{d(i)-1}$,
$i=1,\ldots,r$, where $d(i)=0$ for $i=1,\ldots,m$ and $d(i)>0$
for $i=m+1,\ldots,r$. It follows in particular that $F_{0}=F_{0}/F_{-1}$
is generated by $b_{1},\ldots,b_{m}$. Choosing representatives $a_{i}\in F_{d(i)}\setminus F_{d(i)-1}$
of the classes $b_{i}$, we have $F_{0}=k[b_{1},\ldots,b_{m}]=k[a_{1},\ldots,a_{m}]$.
We claim that $R(A,\partial)_{+}$ is equal to the homogeneous ideal
$I$ generated by $\upsilon$ and the elements $a_{i}\upsilon^{d(i)}$,
$i=m+1,\ldots,r$. Indeed, let $f\upsilon^{d}\in F_{d}\upsilon^{d}\subset R(A,\partial)_{+}$
be a homogeneous element and let $d_{0}$ be minimal such that $f\in F_{d_{0}}\setminus F_{d_{0}-1}$.
If $d_{0}<d$ then $f\upsilon^{d_{0}}\in F_{d_{0}}\upsilon^{d_{0}}\subset R(A,\partial)$
and then $f\upsilon^{d}=(f\upsilon^{d_{0}})\upsilon^{d-d_{0}}\in I$.
Otherwise, if $d_{0}=d$, the residue class $\overline{f}$ of $f$
in $F_{d}/F_{d-1}$ is nonzero, and by hypothesis, there exists a
homogeneous polynomial $P\in F_{0}[t_{m+1},\ldots,t_{r}]$ of degree
$d$ with respect to the weights $d(t_{i})=d(i)$, $i=m+1,\ldots,r$,
such that $\overline{f}=P(b_{m+1},\ldots,b_{r})$. It follows that
\[
f\upsilon^{d}-P(a_{m+1}\upsilon^{d(m+1)},\ldots,a_{r}\upsilon^{d(r)})\in F_{d}\upsilon^{d}
\]
is contained in the subspace $F_{d-1}\upsilon^{d}$, hence is equal
to $(g_{d-1}\upsilon^{d-1})\upsilon$ for some element $g_{d-1}\upsilon^{d-1}\in F_{d-1}\upsilon^{d-1}$
of $R(A,\partial)$, which implies in turn that $f\upsilon^{d}$ belongs
to $I$.
\end{proof}

\subsubsection{Geometric criterion for finite generation}

Recall that a $\mathbb{P}^{1}$\emph{-fibration} between algebraic
$k$-varieties is a surjective projective morphism of finite type
$\pi:Y\rightarrow Y_{0}$ whose fiber $Y_{\eta}$ over the generic
point $\eta$ of $Y_{0}$ is isomorphic to the projective line $\mathbb{P}_{k(Y_{0})}^{1}$
over the field of rational functions $k(Y_{0})$ of $Y_{0}$. 
\begin{prop}
\label{prop:ReesAlg-P1-fib}Let $(A,\partial)$ be an integral normal
$k$-algebra of finite type endowed with a nontrivial locally nilpotent
$k$-derivation $\partial$ whose Rees algebra $R(A,\partial)=\bigoplus_{n\geq0}F_{n}\upsilon^{n}\subseteq A[\upsilon]$
is a finitely generated $k$-algebra. Let $A_{0}=F_{0}$ and let 
\[
X=\mathrm{Spec}(A)\hookrightarrow Y=\mathrm{Proj}_{k}(R(A,\partial))
\]
be the open embedding of schemes over $Y_{0}=\mathrm{Spec}(A_{0})$
induced by the graded inclusion $\eta:R(A,\partial)\hookrightarrow A[\upsilon]$
$($see (\ref{eq:Graded-inclusions})$)$. Then the following hold:

1) The schemes $Y_{0}$ and $Y$ are normal $k$-varieties,

2) The structure morphism $\pi:Y\rightarrow Y_{0}$ is a $\mathbb{P}^{1}$-fibration,

3) The effective Weil divisor $B=V_{+}(\upsilon)$ on $Y$ is ample
and the restriction of the sheaf $\mathcal{O}_{Y}(B)$ to the generic
fiber $Y_{\eta}\cong\mathbb{P}_{k(Y_{0})}^{1}$ of $\pi$ is equal
to $\mathcal{O}_{\mathbb{P}_{k(Y_{0})}^{1}}(1)$.
\end{prop}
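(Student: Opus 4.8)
The plan is to work mostly with the graded inclusion $\eta\colon R(A,\partial)\hookrightarrow A[\upsilon]$ and the $\upsilon$-inversion isomorphism $R(A,\partial)[\upsilon^{-1}]\cong A[\upsilon^{\pm1}]$ from Lemma \ref{lem:upsilon-inversion}, together with Lemma \ref{lem:Rees-preslice} to understand the behaviour over the locus where $\partial$ admits a slice.

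\emph{Step 1 (normality).} Since $A$ is integral normal, so is $A_0=\mathrm{Ker}\,\partial$ by \cite[Proposition 1.13]{FreuBook}, giving that $Y_0=\mathrm{Spec}(A_0)$ is a normal $k$-variety (finite type by hypothesis 3 of Lemma \ref{lem:Equiv-charac-Rees-Finite-Gen}). For $Y$, I would invoke the Corollary following Lemma \ref{lem:Normalization-Commute-Rees}: since $A$ is normal, $R(A,\partial)$ is integral and normal, and it is finitely generated by hypothesis, so $\mathrm{Proj}_k(R(A,\partial))$ is a normal $k$-variety of finite type.

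\emph{Step 2 ($\mathbb{P}^1$-fibration).} The structure morphism $\pi\colon Y=\mathrm{Proj}_k(R(A,\partial))\to Y_0$ is projective of finite type by construction, and surjective since $R(A,\partial)$ contains $A_0[\upsilon]$ so that $\pi$ admits the section $V_+(\upsilon)\cong\mathrm{Spec}(A_0/\!\ldots)$ — more carefully, $\pi$ is surjective because $X=\mathrm{Spec}(A)\hookrightarrow Y$ is the open set $D_+(\upsilon)$ by Lemma \ref{lem:Graded-Hom-Open-Immersion}, and the composite $X\to Y_0$ is the quotient morphism, which is dominant with every fibre nonempty (it is faithfully flat in codimension one, but surjectivity already follows from $\pi$ proper plus $X\to Y_0$ dominant). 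To identify the generic fibre, localize at the generic point $\eta$ of $Y_0$: then $A_0$ becomes the field $k(Y_0)$, and $A\otimes_{A_0}k(Y_0)$ is a localization of $A$ on which $\partial$ extends to a locally nilpotent $k(Y_0)$-derivation which is nontrivial (as $\partial\neq0$ and $\mathrm{Frac}(A_0)=k(Y_0)$) and hence admits a slice $s$ after a further localization of $A_0$ — i.e. there is $a\in A_0\setminus\{0\}$ and $s\in F_1\setminus F_0$ with $\partial s=a$. By Lemma \ref{lem:Rees-preslice} and the subsequent Remark, $R(A,\partial)_{a}\cong(A_0)_a[s,\upsilon]$ with $s,\upsilon$ homogeneous of degree $1$, so $\mathrm{Proj}$ of this over $(A_0)_a$ is $\mathbb{P}^1_{(A_0)_a}$; passing to the generic point gives $Y_\eta\cong\mathbb{P}^1_{k(Y_0)}$, as required.

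\emph{Step 3 (ampleness of $B$ and its restriction to $Y_\eta$).} By definition $\mathcal{O}_Y(B)=\mathcal{O}_Y(1)$ for the grading on $R(A,\partial)$, up to the identification $B=V_+(\upsilon)$, and this is relatively ample for $\pi$ since $R(A,\partial)$ is generated in degrees $\le$ some bound over $A_0$; since $Y_0$ is affine, relative ampleness over $Y_0$ plus $Y_0$ affine gives ampleness of $B$ on $Y$ (a very ample power embeds $Y$ in $\mathbb{P}^N_{A_0}$, which is projective over the affine $Y_0$, hence quasi-projective, and $\mathcal{O}(B)$ ample there). Finally, to compute $\mathcal{O}_Y(B)|_{Y_\eta}$, use the chart from Step 2: over $(A_0)_a$ the divisor $V_+(\upsilon)$ in $\mathrm{Proj}((A_0)_a[s,\upsilon])=\mathbb{P}^1_{(A_0)_a}$ is the single section $\{\upsilon=0\}$, a divisor of relative degree $1$, so $\mathcal{O}_Y(B)|_{Y_\eta}\cong\mathcal{O}_{\mathbb{P}^1_{k(Y_0)}}(1)$.

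The main obstacle I anticipate is making Step 2 fully rigorous at the level of the generic fibre: one must pass from the "slice after localization'' picture (clear over a principal open $\mathrm{Spec}((A_0)_a)$ via Lemma \ref{lem:Rees-preslice}) to an honest statement about $Y_\eta$, and check that the finite generation hypothesis is what guarantees $Y\to Y_0$ is actually a morphism \emph{of finite type} with $Y$ of finite type (so that "$\mathbb{P}^1$-fibration'' in the stated sense applies) — this is where hypothesis (1) of Proposition \ref{prop:ReesAlg-P1-fib}, rather than just normality, is essential. A secondary technical point is confirming surjectivity of $\pi$ on the nose rather than mere dominance; the cleanest route is that $\pi$ is proper and its image is closed and contains the dense open image of $X\to Y_0$, hence is all of $Y_0$.
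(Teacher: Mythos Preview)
Your proposal is correct and follows essentially the same approach as the paper: normality via \cite[Proposition 1.13]{FreuBook} and Lemma~\ref{lem:Normalization-Commute-Rees}, the $\mathbb{P}^{1}$-fibration and the computation of $\mathcal{O}_{Y}(B)|_{Y_{\eta}}$ via the slice-localization of Lemma~\ref{lem:Rees-preslice}, and ampleness from the Proj construction together with affineness of $Y_{0}$. The technical worries you flag in your last paragraph are exactly what the paper dispatches by citing \cite[Proposition 4.6.18]{EGAII} (finite type of $\pi$ and existence of $d\geq 1$ with $\mathcal{O}_{Y}(d)$ invertible and $\pi$-ample, so $B$ is $\mathbb{Q}$-Cartier rather than Cartier) and \cite[Proposition 4.5.10]{EGAII} (passage from $\pi$-ample to ample over an affine base).
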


\begin{proof}
By Lemma \ref{lem:Equiv-charac-Rees-Finite-Gen}, $A_{0}$ is a $k$-algebra
of finite type. Since $A$ is normal by assumption, the normality
of $Y_{0}=\mathrm{Spec}(A_{0})$ and $Y$ follow from \cite[Proposition 1.13]{FreuBook}
and Lemma \ref{lem:Normalization-Commute-Rees} respectively. Since
$R(A,\partial)$ is finitely generated over $A_{0}$, it follows from
\cite[Proposition 4.6.18]{EGAII} that $\pi:Y=\mathrm{Proj}_{k}(R(A,\partial))\rightarrow Y_{0}$
is a morphism of finite type and that there exists $d\geq1$ such
that the quasi-coherent $\mathcal{O}_{Y}$-module $\mathcal{O}_{Y}(d)$
associated to $F_{d}$ is invertible and $\pi$-ample. It follows
that $dB=V_{+}(\upsilon^{d})$ is a $\pi$-ample Cartier divisor,
and since $Y_{0}$ is affine, we deduce in turn from \cite[Proposition 4.5.10]{EGAII}
that $B$ is $\mathbb{Q}$-Cartier and ample on $Y$. By Lemma \ref{lem:Graded-Hom-Open-Immersion},
the image of the open embedding $X\hookrightarrow Y$ coincides with
the complement of the support of $B$ on $Y$. Furthermore, by Lemma
\ref{lem:Projective-Ga-action-extension}, the inclusion $X\hookrightarrow Y$
is equivariant for the $\mathbb{G}_{a}$-actions associated with the
locally nilpotent $k$-derivations $\partial$ and $\upsilon R(\partial)$
on $A$ and $R(A,\partial)$ respectively. Lemma \ref{lem:Rees-preslice}
implies that for every $s\in F_{1}\setminus F_{0}$, we have an isomophism
\[
(R(A,\partial)_{\partial s},\upsilon R(\partial))\cong((A_{0})_{\partial s}[s,\upsilon],\upsilon\tfrac{\partial}{\partial s}).
\]
It follows that the restriction of $\pi$ over the principal affine
open subset $(Y_{0})_{\partial s}\cong\mathrm{Spec}((A_{0})_{\partial s})$
is isomorphic to the trivial $\mathbb{P}^{1}$-bundle $\mathrm{Proj}_{k}((A_{0})_{\partial s}[s,\upsilon])\rightarrow(Y_{0})_{\partial s}$
and that the restriction of $\mathcal{O}_{Y}(B)$ over $(Y_{0})_{\partial s}$
is equal to $\mathcal{O}_{\mathbb{P}_{(A_{0})_{\partial s}}^{1}}(1)$. 
\end{proof}
Conversely, given a normal affine $k$-variety $Y_{0}$ and a $\mathbb{P}^{1}$-fibration
$\pi:Y\rightarrow Y_{0}$ where $Y$ is a normal $k$-variety, it
is a natural question to characterize which effective Weil divisors
$B$ on $Y$ have the property that their complements are affine varieties
carrying $\mathbb{G}_{a}$-actions with finitely generated associated
Rees algebras. Recall that a Weil divisor $B$ on a $k$-variety $Y$
is called \emph{semi-ample} if there exists $n\geq1$ such that the
sheaf $\mathcal{O}_{Y}(nB)$ is invertible and generated by its global
sections. We then have the following criterion:
\begin{thm}
\label{thm:Semi-ample-complement}Let $Y_{0}=\mathrm{Spec}(A_{0})$
be a normal affine $k$-variety and let $\pi:Y\rightarrow Y_{0}$
be a $\mathbb{P}^{1}$-fibration where $Y$ is a normal $k$-variety.
Let $B$ an effective semi-ample Weil divisor on $Y$ with the following
properties: 

a) The scheme $X=Y\setminus B$ is an affine $k$-variety,

b) The restriction of the sheaf $\mathcal{O}_{Y}(B)$ to the generic
fiber $Y_{\eta}\cong\mathbb{P}_{k(Y_{0})}^{1}$ of $\pi$ is equal
to $\mathcal{O}_{\mathbb{P}_{k(Y_{0})}^{1}}(1)$.

\noindent Then the following hold:

1) There exists a nontrivial $\mathbb{G}_{a,Y_{0}}$-action on $Y$
which leaves $B$ invariant and restricts to a $\mathbb{G}_{a,Y_{0}}$-action
on $X$.

2) The Rees algebra $R(A,\partial)$ of the locally nilpotent $k$-derivation
$\partial$ of $A=\Gamma(X,\mathcal{O}_{X})$ corresponding to the
induced $\mathbb{G}_{a,Y_{0}}$-action on $X$ is a finitely generated
$A_{0}$-algebra isomorphic to $R(Y,B)=\bigoplus_{n\geq0}H^{0}(Y,\mathcal{O}_{Y}(nB))$.
\end{thm}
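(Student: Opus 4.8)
The plan is to build the derivation $\partial$ on $A=\Gamma(X,\mathcal{O}_X)$ from the $\mathbb{P}^1$-fibration structure, and then identify $R(A,\partial)$ with the section ring $R(Y,B)$ by a filtration argument parallel to Proposition~\ref{prop:ReesAlg-P1-fib}. First I would observe that $R(Y,B)=\bigoplus_{n\ge 0}H^0(Y,\mathcal{O}_Y(nB))$ is a finitely generated $A_0$-algebra: semi-ampleness of $B$ gives some $d\ge 1$ with $\mathcal{O}_Y(dB)$ invertible and globally generated, so the Veronese subring $\bigoplus_n H^0(Y,\mathcal{O}_Y(ndB))$ is finitely generated over $A_0=H^0(Y_0,\mathcal{O}_{Y_0})=H^0(Y,\mathcal{O}_Y)$ (here one uses that $\pi$ has connected fibers, being a $\mathbb{P}^1$-fibration, so $\pi_*\mathcal{O}_Y=\mathcal{O}_{Y_0}$), and then the full ring $R(Y,B)$ is a finite module over that Veronese subring. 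Moreover $\mathrm{Spec}(R(Y,B))$ with the grading gives a morphism whose Proj recovers $Y$ together with $B=V_+$ of the degree-one part, because $dB$ is $\pi$-ample and $Y_0$ affine forces $B$ itself to be ample on $Y$ (as in the proof of Proposition~\ref{prop:ReesAlg-P1-fib}, via \cite[Proposition 4.5.10]{EGAII}); in particular $X=Y\setminus B=\mathrm{Proj}(R(Y,B))\setminus V_+(\upsilon)=\mathrm{Spec}(R(Y,B)_{(\upsilon)})$, so $A=R(Y,B)_{(\upsilon)}$.

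Next I would construct the $\mathbb{G}_{a,Y_0}$-action. Condition~(b) says the generic fiber $Y_\eta\cong\mathbb{P}^1_{k(Y_0)}$ meets $B$ in a single $k(Y_0)$-rational point with multiplicity one; removing it gives $Y_\eta\setminus B_\eta\cong\mathbb{A}^1_{k(Y_0)}$, which carries the translation $\mathbb{G}_a$-action over $k(Y_0)$, fixing the point at infinity. The task is to spread this out: the rational $\mathbb{G}_a$-action on $Y$ over $Y_0$ must be shown to be regular and to leave $B$ invariant. I would do this on the level of the graded ring: the derivation $\tfrac{\partial}{\partial s}$ on the generic fiber, where $s$ is a local affine coordinate, extends to a homogeneous degree-$0$ locally nilpotent derivation $\delta$ of $R(Y,B)\otimes_{A_0}k(Y_0)$; one then checks $\delta$ restricts to $R(Y,B)$ itself. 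Concretely, choose the degree-$1$ generator $\upsilon$ of $R(Y,B)$ corresponding to $B$ and a complementary local section; shrinking over a principal affine cover $(Y_0)_{g}$ of $Y_0$ one arranges $R(Y,B)_{g}\cong (A_0)_g[s,\upsilon]$ by the argument in Proposition~\ref{prop:ReesAlg-P1-fib} (using that over such a chart the fibration is the trivial $\mathbb{P}^1$-bundle and $\mathcal{O}_Y(B)$ restricts to $\mathcal{O}(1)$), and the derivation $\upsilon\tfrac{\partial}{\partial s}$ is manifestly defined there; these local derivations glue because they all restrict to the same derivation on the generic fiber and $R(Y,B)$ is the intersection of its localizations (being normal, or directly as a graded subring of a polynomial ring after inverting $\upsilon$). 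This yields a homogeneous degree-$0$ locally nilpotent $k[\upsilon]$-derivation of $R(Y,B)$, hence a $\mathbb{G}_{a,Y_0}$-action on $\mathrm{Spec}_{Y_0}(R(Y,B))$ commuting with the $\mathbb{G}_{m}$-action, which descends to $\mathrm{Proj}=Y$, leaves $V_+(\upsilon)=B$ invariant, and restricts to $X$. Let $\partial$ be the induced locally nilpotent derivation of $A=R(Y,B)_{(\upsilon)}$.

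Finally I would identify $R(A,\partial)$ with $R(Y,B)$. Set $R=R(Y,B)=\bigoplus_n R_n$ with $R_n=H^0(Y,\mathcal{O}_Y(nB))$. Inverting $\upsilon$ gives $R[\upsilon^{-1}]\cong A[\upsilon^{\pm 1}]$ compatibly with the degree-$0$ derivation $\upsilon\tfrac{\partial}{\partial s}\mapsto\tilde\partial$, exactly as in Lemma~\ref{lem:upsilon-inversion}. So $R$ is naturally a graded $A_0[\upsilon]$-subalgebra of $A[\upsilon]$, namely $R=\bigoplus_n G_n\upsilon^n$ for an ascending sequence of $A_0$-submodules $G_n\subseteq A$ with $\bigcup_n G_n=A$. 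It then suffices to prove $G_n=F_n=\mathrm{Ker}\,\partial^{n+1}$ for all $n$. The inclusion $G_n\subseteq F_n$ follows because $\upsilon R(\partial)$ is homogeneous of degree $-1$ on $R$ (the derivation lowers the $\upsilon$-degree by one and is locally nilpotent), so each $\upsilon^n$-component of $R$ is killed by the $(n+1)$-st power of $\partial$ when we pass to $A$. For the reverse inclusion, work over the chart $(Y_0)_g$ where $R_g\cong(A_0)_g[s,\upsilon]$ and $\partial=\tfrac{\partial}{\partial s}$: there $F_n\otimes(A_0)_g=(A_0)_g[s]_{\le n}=G_n\otimes(A_0)_g$, and since both $F_n$ and $G_n$ are subsheaves of the quasi-coherent sheaf $\tilde A$ on the integral normal scheme $Y_0$ that agree after restriction to a covering by such charts, they coincide. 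The compatibility of the derivations is then immediate by construction. \textbf{The main obstacle} is the middle step: showing that the rational $\mathbb{G}_a$-action coming from the single multiplicity-one point on the generic fiber genuinely extends to a regular action on all of $Y$ (equivalently, that the local degree-$0$ derivations glue to a global locally nilpotent derivation of the finitely generated ring $R(Y,B)$) — normality of $Y$ and the product structure over the $(Y_0)_g$ charts are what make this go through, but verifying that the charts actually cover $Y_0$ and that the glued operator is still locally finite requires care.
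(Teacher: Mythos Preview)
There is a genuine gap in your first paragraph: you assert that semi-ampleness of $B$ together with $Y_0$ affine forces $B$ to be ample on $Y$, so that $\mathrm{Proj}(R(Y,B))$ recovers $Y$. This is false. Semi-ample only gives that some $\mathcal{O}_Y(dB)$ is globally generated, not $\pi$-ample, and the example immediately following the theorem in the paper (the threefold $xv=y(yu+1)$ with its small resolution) is built precisely to exhibit a pair $(Y,B)$ satisfying all the hypotheses with $B$ semi-ample but \emph{not} ample. In general the canonical morphism $\psi:Y\to Y'=\mathrm{Proj}_k(R(Y,B))$ is only birational and may contract curves lying in $B$. What the paper does instead is work on $Y'$: since $X=Y\setminus B$ is affine it contains no complete curves, so $B\cdot C>0$ for every complete curve meeting $X$, whence $\psi|_X$ is quasi-finite birational and therefore an open immersion onto $D_+(\upsilon)\subset Y'$ by Zariski's Main Theorem. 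The identification $A\cong R(Y,B)_{(\upsilon)}$ you need then follows, but only after this step; your argument as written does not establish it.

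Your construction of the $\mathbb{G}_{a,Y_0}$-action by gluing local derivations over charts $(Y_0)_g$ on which the $\mathbb{P}^1$-fibration trivialises is also more delicate than necessary, and as you note yourself, such charts need not cover $Y_0$ since $\pi$ may have degenerate fibers. The paper's approach is more direct: once one knows $R(Y,B)$ is finitely generated over $A_0$ and that its generic fiber is $k(Y_0)[s,\upsilon]$, the generic-fiber derivation $\upsilon\tfrac{\partial}{\partial s}$ applied to a finite set of homogeneous generators produces finitely many elements of $R(Y,B)\otimes_{A_0}k(Y_0)$; a single $f\in A_0$ clears all denominators, and $f\upsilon\tfrac{\partial}{\partial s}$ is then a global homogeneous degree-$0$ locally nilpotent $A_0$-derivation of $R(Y,B)$. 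This avoids any gluing and any need for the trivialising charts to cover. Your final filtration comparison $G_n=F_n$ likewise relies on those charts covering $Y_0$; once the derivation is constructed on $R(Y,B)$ as above, the identification with the Rees filtration follows more cleanly from Lemma~\ref{lem:Rees-preslice} applied to the single slice $s$ (after inverting $\partial s$).
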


\begin{proof}
By definition, $\mathcal{L}=\mathcal{O}_{Y}(B)$ is the reflexive
subsheaf of rank $1$ of the constant sheaf $\mathcal{K}_{Y}$ of
rational functions on $Y$ defined by 
\[
\mathcal{L}(U)=\{f\in\mathcal{K}_{Y}(U),\mathrm{div}(f)+B|_{U}\geq0\}\cup\{0\}
\]
for every open subset $U$ of $Y$. The fact that $B$ is effective
implies that the constant section $1$ of $\mathcal{K}_{Y}$ is contained
in $\mathcal{L}$. We denote by $\upsilon\in H^{0}(Y,\mathcal{L})$
the corresponding global section of $\mathcal{L}$ whose zero locus
is equal to $B$. We then get an inclusion 
\[
H^{0}(Y,\mathcal{O}_{Y})\hookrightarrow H^{0}(Y,\mathcal{L}),\;f\mapsto f\upsilon.
\]
Since $Y$ is projective over the affine variety $Y_{0}$, by \cite[Chapter III,Theorem 5.2]{Ha77},
we have $H^{0}(Y,\mathcal{O}_{Y})=A_{0}$ and $H^{0}(Y,\mathcal{L}^{\otimes n})$
is a finitely generated $A_{0}$-module for every $n$. Furthermore,
the restriction homomorphism 
\[
H^{0}(Y,\mathcal{L}^{\otimes n})\rightarrow H^{0}(Y_{\eta},\mathcal{L}^{\otimes n})=H^{0}(Y_{\eta},\mathcal{O}_{Y_{\eta}}(nB|_{Y_{\eta}}))\cong H^{0}(\mathbb{P}_{k(Y_{0})}^{1},\mathcal{O}_{\mathbb{P}_{k(Y_{0})}^{1}}(n))
\]
is surjective. The fact that for an effective semi-ample Weil divisor
$B$ the algebra $R(Y,\mathcal{L})=R(Y,B)$ is finitely generated
over $A_{0}$ is a classical result due to Zariski (see e.g. \cite{Za62}).
Let us briefly recall the argument. Since $\mathcal{L}$ is semi-ample,
it follows from \cite[Theorem 2.1.27]{LazBook} that for sufficiently
big and divisible $d\geq1$, the sheaf $\mathcal{L}^{\otimes d}$
is invertible and the rational map 
\[
\psi_{d}:Y\dashrightarrow\mathbb{P}(H^{0}(Y,\mathcal{L}^{\otimes d})^{*})
\]
is an everywhere defined morphism of $Y_{0}$-schemes with connected
fibers, whose image is a normal variety $\pi_{d}:Y_{d}\rightarrow Y_{0}$
projective over $Y_{0}$, and such that we have $\mathcal{L}^{\otimes d}=\psi_{d}^{*}\mathcal{O}_{Y_{d}}(1)$.
This implies that the Veronese subring $R(Y,\mathcal{L})^{(d)}=\bigoplus_{n\geq0}H^{0}(Y,\mathcal{L}^{\otimes nd})$
is finitely generated over $A_{0}$, and hence that $R(Y,B)=R(Y,\mathcal{L})$
is finitely generated by \cite[Corollary 1.2.5]{ADHL15}.

It follows that $Y'=\mathrm{Proj}_{k}(R(Y,B))$ is a normal variety,
projective over $Y_{0}$ and that the canonical rational map of $Y_{0}$-schemes
\[
\psi:Y\dashrightarrow Y'=\mathrm{Proj}_{k}(R(Y,B))
\]
is a morphism. Since by hypothesis the restriction of $\mathcal{L}$
to $Y_{\eta}$ is invertible and very ample, $\psi$ restricts to
an isomorphism over the generic point $\eta$ of $Y_{0}$, hence is
birational. Furthermore, since $X=Y\setminus B$ is affine hence does
not contain any complete curve, it follows that $B\cdot C>0$ for
every complete curve in $Y$ intersecting $X$. This implies that
the restriction of $\psi$ to $Y\setminus B$ is quasi-finite and
birational, hence an isomorphism onto its image by Zariski Main Theorem.
The latter coincides by construction with the complement 
\[
D_{+}(\upsilon)\cong\mathrm{Spec}(R(Y,B)/(1-\upsilon)R(Y,B))
\]
of the Weil divisor $B'=V_{+}(\upsilon)$ on $Y'$. Hypothesis b)
implies further that there exists a global section $s\in H^{0}(Y,\mathcal{L})$
different from $\upsilon$ such that 
\[
\mathrm{Proj}_{k}(R(Y,B))\times_{Y_{0}}\mathrm{Spec}(k(Y_{0}))\cong\mathrm{Proj}_{k(Y_{0})}(k(Y_{0})[s,\upsilon]).
\]
It follows that there exists $f\in A_{0}$ such that the homogeneous
locally nilpotent $k(Y_{0})$-derivation $f\upsilon\frac{\partial}{\partial s}$
of degree $0$ of $k(Y_{0})[s,\upsilon]$ extends to a homogeneous
locally nilpotent $A_{0}$-derivation $\tilde{\partial}$ of $R(Y,B)$
of degree $0$ defining a $\mathbb{G}_{a,Y_{0}}$-action on $Y'$
leaving the Weil divisor $B'$ invariant and inducing the trivial
action on $B'$. Since $\psi:Y\rightarrow Y'$ restricts to an isomorphism
$Y\setminus B\rightarrow Y'\setminus B'$, this action lifts to a
$\mathbb{G}_{a,Y_{0}}$-action on $Y$ leaving $B$ invariant and
hence $X=Y\setminus B$ invariant. By construction, the Rees algebra
of the associated locally nilpotent $A_{0}$-derivation $\partial$
of $\Gamma(X,\mathcal{O}_{X})$ is isomorphic to the finitely generated
algebra $R(Y,B)=R(Y',B')$.
\end{proof}
Given a pair $(\pi:Y\rightarrow Y_{0},B)$ satisfying the hypotheses
of Theorem \ref{thm:Semi-ample-complement}, the proof actually shows
that the composition of the open embedding $X=Y\setminus B\hookrightarrow Y$
with the canonical morphism 
\[
\psi:Y\rightarrow Y'=\mathrm{Proj}_{k}(R(Y,B))
\]
of schemes over $Y_{0}$ is an open embedding of $X$ in $Y'$ as
the complement of the ample Weil divisor $B'=\psi_{*}(B)$. The following
example illustrates the fact that even when $X$ is smooth, the variety
$Y'$ can have bad singularities supported along $B'$ so that, depending
on the context, it can be more convenient to consider a model $(Y,B)$
with better singularities but non-ample boundary divisor $B$.
\begin{example}
Let $X\subset\mathbb{A}_{k}^{4}=\mathrm{Spec}(k[x,y,u,v])$ be the
smooth affine $3$-fold with equation $xv=y(yu+1)$. The locally nilpotent
$k[x,y]$-derivation 
\[
\partial=x\tfrac{\partial}{\partial u}+y^{2}\tfrac{\partial}{\partial v}
\]
of the coordinate ring $A$ of $X$ defines a $\mathbb{G}_{a}$-action
on $X$. The ring of invariants $A_{0}$ is equal to $k[x,y]$ and
the corresponding $\mathbb{G}_{a}$-invariant morphism $\pi=\mathrm{pr}_{x,y}:X\rightarrow\mathbb{A}_{k}^{2}$
restricts to a $\mathbb{G}_{a}$-torsor over the complement of the
origin $(0,0)$. On the other hand, $\pi^{-1}((0,0))$ is isomorphic
to $\mathbb{A}_{k}^{2}=\mathrm{Spec}(k[u,v])$ and consists of $\mathbb{G}_{a}$-fixed
points only.

The Rees algebra $R(A,\partial)$ is isomorphic to the quotient of
$k[x,y][u,v,\upsilon]$ by the homogeneous ideal generated by $xv-y^{2}u-y\upsilon$,
where $u$, $v$ and $\upsilon$ all have weight $1$. So $Y'=\mathrm{Proj}_{k}(R(A,\partial))$
is isomorphic to the closed sub-variety in $\mathbb{A}_{k}^{2}\times\mathbb{P}_{k}^{2}=\mathrm{Proj}_{k[x,y]}(k[x,y][u,v,\upsilon])$
defined by the equation $xv-y^{2}u-y\upsilon=0$, and $X=Y'\setminus B'$
where $B'$ is the irreducible ample relative hyperplane section $\{\upsilon=0\}$,
isomorphic to the blow-up of $\mathbb{A}_{k}^{2}$ with center at
the closed subscheme with defining ideal $(x,y^{2})$. The projection
$\overline{\pi}=\mathrm{pr}_{x,y}:Y'\rightarrow\mathbb{A}_{k}^{2}$
restricts to a locally trivial $\mathbb{P}^{1}$-bundle over the complement
of the origin whereas the fiber $\overline{\pi}^{-1}(0,0)$ is isomorphic
to $\mathbb{P}_{k}^{2}=\mathrm{Proj}_{k}(k[u,v,\upsilon])$. The $k[x,y]$-derivation
$\partial$ extends to the homogeneous $k[x,y,\upsilon]$-derivation
$\upsilon\partial$ of degree $0$ of $k[x,y][u,v,\upsilon]$ defining
a $\mathbb{G}_{a}$-action 
\[
((x,y),[u:v:\upsilon])\mapsto((x,y),[u+tx\upsilon:v+y^{2}\upsilon:\upsilon])
\]
on $\mathbb{A}_{k}^{2}\times\mathbb{P}_{k}^{2}$, leaving $Y'$ invariant.
Its restriction to $X$ is equal to that defined by $\partial$ whereas
it restriction to $B'$ is the trivial $\mathbb{G}_{a}$-action.

It is easily seen by the Jacobian criterion that $Y'$ has a unique
singular point $p=((0,0),[1:0:0])$, which is contained in $B'$.
Let $c:Y\rightarrow Y'$ be the blow-up of the Weil divisor $D=\overline{\pi}^{-1}(0,0)$
and let $E$ be its exceptional locus. Since $D$ is $\mathbb{G}_{a}$-invariant,
the $\mathbb{G}_{a}$-action on $Y'$ lifts to a $\mathbb{G}_{a}$-action
on $Y$. Furthermore, since $Y'\setminus\{p\}$ is smooth, $D|_{Y'\setminus\{p\}}$
is a Cartier divisor, which implies that $c$ induces a $\mathbb{G}_{a}$-equivariant
isomorphism between $Y'\setminus\{p\}$ and $c^{-1}(Y'\setminus\{p\})\cong Y\setminus E$.
In particular, $c$ induces a $\mathbb{G}_{a}$-equivariant isomorphism
between $X$ and $c^{-1}(X)=Y\setminus c^{-1}(B)$.

The intersection of $Y'$ with the affine chart $V=\{u\neq0\}$ of
$\mathbb{A}_{k}^{2}\times\mathbb{P}_{k}^{2}$ is isomorphic to the
sub-variety $xv-yz=0$ in $\mathbb{A}_{k}^{4}$, where $z=y-\upsilon$.
The point $p$ is thus a non-$\mathbb{Q}$-factorial singularity of
$Y'$, the divisor $D|_{V}$ is not $\mathbb{Q}$-Cartier, and the
blow-up $c:Y\rightarrow Y'$ of $D$ is a small resolution of $p$
with exceptional locus $E\cong\mathbb{P}_{k}^{1}$. The threefold
$Y$ is thus smooth and $B=c^{-1}(B')$ is a $\mathbb{G}_{a}$-invariant
irreducible semi-ample Cartier divisor which is not ample, such that
$Y\setminus B$ is equivariantly isomorphic to $X$.
\end{example}

\subsubsection{\label{subsec:Rees-Algo}The Rees algebra algorithm}

Let $X=\mathrm{Spec}(A)$ be a normal affine variety endowed with
a nontrivial $\mathbb{G}_{a}$-action determined by a locally nilpotent
$k$-derivation $\partial$ of $A$. Let $A_{0}=F_{0}=\mathrm{Ker}\partial$
and $\{F_{n}\}_{n\geq0}$ be the associated ascending filtration of
$A$ by its $A_{0}$-submodules. In the case where $A_{0}$ is noetherian,
an algorithm to compute the modules $F_{n}$ was given by Freudenburg
\cite{Fr16,FreuBook} in the form of an extension of van den Essen's
kernel algorithm for a locally nilpotent derivation \cite[$\S$ 1.4]{vdEBook}.
In the case where the Rees algebra $R(A,\partial)$ is finitely generated,
we describe below an extension of these algorithms, which computes
generators of $R(A,\partial)$ from a given set of generators of $A$
as a $k$-algebra.

As in (\ref{eq:Graded-inclusions}), we identify $R(A,\partial)$
with the graded $A_{0}$-subalgebra $\bigoplus_{n\geq0}F_{n}\upsilon^{n}$
of $A[\upsilon]$. Let $a_{1},\ldots,a_{m}\in A$ be a finite collection
of generators of $A$ as a $k$-algebra. For every $i=1,\ldots,m$,
we choose an integer $e(i)$ so that $a_{i}\in F_{e(i)}$. We obtain
a graded subalgebra 
\[
R_{0}=k[\upsilon,\{a_{i}\upsilon^{e(i)}\}_{i=1,\ldots,m}]\subseteq R(A,\partial).
\]
If equality holds, we are done. Otherwise, there exists an element
$a\in F_{d}\setminus F_{d-1}\subset A$, for some $d\geq0$, such
that $a\upsilon^{d}\in R(A,\partial)\setminus R_{0}$. Since $a\in A$,
there exists a polynomial $\tilde{P}\in k[X_{1},\ldots,X_{m}]$ such
that $a=\tilde{P}(a_{1},\ldots,a_{m})$. Letting $P\in k[X_{0},\ldots,X_{m}]$
be the homogenization of $\tilde{P}$ with respect to the weights
$\mathbf{e}=(1,e(1),\ldots,e(m))$, we have $P(\upsilon,a_{1}\upsilon^{e(1)},\ldots,a_{m}\upsilon^{e(m)})=a\upsilon^{N}$
in $R(A,\partial)$ for some $N>d$. Let $N$ be minimal with the
property that $a\upsilon^{N}\in R_{0}$ and consider the graded homomorphism

\begin{eqnarray*} k[X_0,\dots,X_m]& \stackrel\phi\to & R(A,\partial)\\ X_0&\mapsto&\upsilon\\ X_i&\mapsto&a_i\upsilon^{e(i)},\textrm{ for } i\geq 1. 
\end{eqnarray*} Since $k[X_{0},\dots,X_{m}]$ is noetherian, the $\mathbf{e}$-homogeneous
ideal $\phi^{-1}(\upsilon R(A,\partial))\subset k[X_{0},\ldots,X_{m}]$
is finitely generated, say by elements $Q_{1},\dots,Q_{s}\in k[X_{0},\dots,X_{m}]$.
By definition, there exists $q_{i}\in A$ and integers $f(i)$ such
that 
\[
\tfrac{1}{\upsilon}Q_{i}(\upsilon,a_{1}\upsilon^{e(1)},\dots,a_{m}\upsilon^{e(m)})=q_{i}\upsilon^{f(i)}\in R(A,\partial),\textrm{ for }1\leq i\leq s.
\]
Since $N>d$, the polynomial $P$ belongs to $\phi^{-1}(\upsilon R(A,\partial))$,
and it follows that $P=\sum_{i=1}^{s}P_{i}Q_{i}$ for some $\mathbf{e}$-homogeneous
elements $P_{i}\in k[X_{0},\dots,X_{m}]$. Hence 
\[
a\upsilon^{N-1}=\sum_{i=1}^{s}q_{i}\upsilon^{f(i)}P_{i}(\upsilon,a_{1}\upsilon^{e(1)},\dots,a_{m}\upsilon^{e(m)}),
\]
 and it follows that 
\[
a\upsilon^{N-1}\in k[\upsilon,a_{1}\upsilon^{e(1)},\dots,a_{m}\upsilon^{e(m)},q_{1}\upsilon^{f(1)},\dots,q_{s}\upsilon^{f(s)}]\subseteq R(A,\partial).
\]
Thus by adding the generators $q_{i}\upsilon^{f(i)}$, $i=1,\ldots,s$,
to the previous ones, we obtain a subalgebra $R_{1}\subset R(A,\partial)$
with the property that 
\[
\min\{N\,\textrm{such that }a\upsilon^{N}\in R_{1}\}\leq\min\{N\,\textrm{such that }a\upsilon^{N}\in R_{0}\}-1.
\]
If $R(A,\partial)$ is finitely generated over $k$, say $\ensuremath{R(A,\partial)=k[g_{1}\upsilon^{m(1)},\dots,g_{l}\upsilon^{m(l)}]}$
with $g_{i}\in A$, then for each $g_{i}$ there exists a minimal
number $N_{i}$ such that $g_{i}\upsilon^{N_{i}}\in R_{0}\subset R(A,\partial)$.
By iterating the above procedure at most $M=\max_{1\leq i\leq l}\{N_{i}-m(i)\}$
times, we obtain a finitely generated subalgebra $R_{M}\subseteq R(A,\partial)$
which contains all the $g_{i}\upsilon^{m(i)}$, $i=1,\ldots,l$, hence
is equal to $R(A,\partial)$. 

\subsection{Relation between global Rees algebras and relative Rees algebras
of the fixed point free locus}

Let $X=\mathrm{Spec}(A)$ be a normal affine $k$-variety endowed
with a nontrivial $\mathbb{G}_{a}$-action determined by a locally
nilpotent $k$-derivation $\partial$ of $A$. Let $X^{\mathbb{G}_{a}}$
denote the fixed locus of this $\mathbb{G}_{a}$-action. By \cite[10.4]{LMB00}
the induced $\mathbb{G}_{a}$-action $\mu$ on $Y=X\setminus X^{\mathbb{G}_{a}}$
admits a categorical quotient in the category of algebraic spaces
in the form of an \'etale locally trivial $\mathbb{G}_{a}$-torsor
$\rho:Y\rightarrow S$ over a certain algebraic $k$-space $S$. Let
$\{\mathcal{F}_{n}\}_{n\geq0}$ be the filtration of $\rho_{*}\mathcal{O}_{Y}$
associated to the locally nilpotent $\mathcal{O}_{S}$-derivation
$\delta_{S}$ of $\rho_{*}\mathcal{O}_{Y}$ corresponding to the action
$\mu$. By Proposition \ref{prop:Ga-torsors-Rees}, $\mathcal{F}_{1}$
is an \'etale locally free sheaf of rank $2$ on $S$, and the Rees
$\mathcal{O}_{S}$-algebra $\mathcal{R}(Y,\mu)$ is isomorphic to
the symmetric algebra $\mathrm{Sym}^{.}\mathcal{F}_{1}$ of $\mathcal{F}_{1}$.
The relative spectrum $p:V=\mathrm{Spec}_{S}(\mathcal{R}(Y,\mu))\rightarrow S$
is thus an \'etale locally trivial vector bundle of rank $2$ on
$S$.
\begin{lem}
\label{lem:Local-vs-Global-codim2}With the above notation, suppose
that every irreducible component of the fixed locus $X^{\mathbb{G}_{a,k}}$
has codimension at least $2$ in $X$. Then $R(A,\partial)\cong\Gamma(V,\mathcal{O}_{V})$
as graded algebras.
\end{lem}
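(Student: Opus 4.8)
The plan is to identify both graded algebras in the statement with the Rees algebra $\bigoplus_{n\geq 0} F_n\upsilon^n\subseteq A[\upsilon]$, the grading being by the exponent of $\upsilon$. Write $j\colon Y=X\setminus X^{\mathbb{G}_a}\hookrightarrow X$ for the open immersion. The essential step is that, since $A$ is a normal domain and $X^{\mathbb{G}_a}$ has codimension $\geq 2$ in $X$ by hypothesis, the open set $Y$ contains every point of codimension $1$ of $X$; hence, inside $\mathrm{Frac}(A)$,
\[
A\subseteq\Gamma(Y,\mathcal{O}_Y)\subseteq\bigcap_{\mathrm{ht}\,\mathfrak{p}=1}A_{\mathfrak{p}}=A,
\]
so that $\Gamma(Y,\mathcal{O}_Y)=A$, the last equality being the algebraic Hartogs property of the normal noetherian domain $A$. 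Since $\rho\colon Y\to S$ is affine, this gives $\Gamma(S,\rho_*\mathcal{O}_Y)=\Gamma(Y,\mathcal{O}_Y)=A$, and because the $\mathbb{G}_a$-action on $Y$ is just the restriction of the one on $X$, the locally nilpotent $\mathcal{O}_S$-derivation $\delta_S$ of $\rho_*\mathcal{O}_Y$ induces on global sections precisely the derivation $\partial$ of $A$.

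Granting this, I would next compute the global sections of the filtration sheaves $\mathcal{F}_n=\bigcap_{i>n}\mathcal{K}er D^{(i)}\subseteq\rho_*\mathcal{O}_Y$, where in characteristic zero $D^{(i)}=\tfrac{1}{i!}\delta_S^{\,i}$. By left exactness of $\Gamma(S,-)$ and the identification of $(\Gamma(S,\rho_*\mathcal{O}_Y),\Gamma(S,\delta_S))$ with $(A,\partial)$ just established,
\[
\Gamma(S,\mathcal{F}_n)=\bigcap_{i>n}\mathrm{Ker}\,\partial^{\,i}=\mathrm{Ker}\,\partial^{\,n+1}=F_n,
\]
the middle equality because $\mathrm{Ker}\,\partial^{\,n+1}\subseteq\mathrm{Ker}\,\partial^{\,i}$ for every $i\geq n+1$.

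It then remains to assemble the pieces. As $p\colon V=\mathrm{Spec}_S(\mathcal{R}(Y,\mu))\to S$ is affine, $\Gamma(V,\mathcal{O}_V)=\Gamma(S,\mathcal{R}(Y,\mu))$; and by Proposition \ref{prop:Ga-torsors-Rees} the sheaf $\mathcal{R}(Y,\mu)=\bigoplus_{n\geq 0}\mathcal{F}_n\upsilon^n$ is the increasing union of its finite truncations $\bigoplus_{n\leq N}\mathcal{F}_n\upsilon^n$. Since $S$ is a noetherian---hence quasi-compact and quasi-separated---algebraic space, $\Gamma(S,-)$ commutes with this filtered colimit and with finite direct sums, so
\[
\Gamma(V,\mathcal{O}_V)=\bigoplus_{n\geq 0}\Gamma(S,\mathcal{F}_n)\,\upsilon^n=\bigoplus_{n\geq 0}F_n\upsilon^n=R(A,\partial),
\]
an equality of graded $k$-algebras: the grading on the left is the one induced by the $\mathbb{G}_m$-action on $V$ attached to the grading of $\mathcal{R}(Y,\mu)$, and the products agree because both are computed inside $A[\upsilon]$.

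The only genuinely non-formal input is the Hartogs-type identity $\Gamma(Y,\mathcal{O}_Y)=A$, which is exactly where the codimension $\geq 2$ assumption on the fixed locus enters. Without that assumption one still obtains an injective graded homomorphism $R(A,\partial)\hookrightarrow\Gamma(V,\mathcal{O}_V)$, but it need not be surjective (for instance, a local slice of $\partial$ regular only on $Y$ need not extend to $X$, as already happens for $\partial=x\tfrac{\partial}{\partial y}$ on $\mathbb{A}^2_k$). Everything else is routine bookkeeping with left-exact functors on the noetherian algebraic space $S$ together with the structural description of $\mathcal{R}(Y,\mu)$ recalled in Section 1.
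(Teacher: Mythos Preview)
Your proof is correct and follows essentially the same route as the paper: Hartogs to get $\Gamma(Y,\mathcal{O}_Y)=A$, identification of $\Gamma(S,\mathcal{F}_n)$ with $F_n$, and then commuting $\Gamma(S,-)$ with the direct sum. The only cosmetic difference is that for the identification $\Gamma(S,\mathcal{F}_n)=F_n$ the paper argues via extension of rational functions across the codimension-$2$ locus, whereas you invoke left exactness of $\Gamma(S,-)$ applied to the kernel sheaf $\mathcal{K}er(\delta_S^{\,n+1})$; your formulation is arguably cleaner, and your explicit justification for swapping $\Gamma$ and $\bigoplus$ via quasi-compactness of $S$ fills in a step the paper leaves implicit.
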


\begin{proof}
Since $X\setminus Y=X^{\mathbb{G}_{a}}$ has codimension at least
$2$ in the normal affine variety $X$, we have 
\[
\Gamma(S,\rho_{*}\mathcal{O}_{Y})=\Gamma(Y,\mathcal{O}_{Y})=\Gamma(X,\mathcal{O}_{X})=A.
\]
Furthermore, since $\mu$ is the restriction to $Y$ of the $\mathbb{G}_{a}$-action
determined by $\partial$, for every $n\geq0$, the subspaces $F_{n}=\mathrm{Ker}\partial^{n+1}$
of $A$ and $\Gamma(S,\mathcal{F}_{n})$ of $\Gamma(S,\rho_{*}\mathcal{O}_{Y})$
coincide. Indeed, by definition $\Gamma(S,\delta_{S})$ and $\partial$
extend to the same derivation $\tilde{\partial}$ of the field of
rational functions $\mathrm{Frac}(A)$ of $X$. It is clear that $F_{n}\subseteq\Gamma(S,\mathcal{F}_{n})$
and that conversely every element $f\in\Gamma(S,\mathcal{F}_{n})$
is a rational function on $X$, defined everywhere except maybe on
$X^{\mathbb{G}_{a}}$, and with the property that $\Gamma(S,\delta_{S})^{n+1}f=\tilde{\partial}^{n+1}f=0$.
Since $X^{\mathbb{G}_{a}}$ has codimension at least $2$ and $X$
is normal, $f$ is everywhere defined on $X$ and satisfies $\partial^{n+1}f=0$.
So $f$ is an element of $F_{n}$. We thus obtain isomorphisms of
graded algebras 
\begin{align*}
R(A,\partial) & =\bigoplus_{n\geq0}F_{n}=\bigoplus_{n\geq0}\Gamma(S,\mathcal{F}_{n})\cong\Gamma(S,\bigoplus_{n\geq0}\mathcal{F}_{n})\\
 & \cong\Gamma(S,\bigoplus_{n\geq0}\mathrm{Sym}^{.}\mathcal{F}_{1})=\Gamma(S,p_{*}\mathcal{O}_{V})=\Gamma(V,\mathcal{O}_{V}).
\end{align*}
\end{proof}
Let $\{F_{n}\}_{n\geq0}$ be the increasing filtration of $A$ associated
to $\partial$ and let $A_{0}=F_{0}=\mathrm{Ker}\partial$. Then for
every $n\geq1$, we have an exact sequence of $A_{0}$-modules 
\[
0\rightarrow A_{0}\rightarrow F_{n}\stackrel{\partial}{\longrightarrow}F_{n-1}
\]
in which the last homomorphism is in general not surjective. In contrast,
for the $\mathbb{G}_{a}$-torsor $\rho:Y\rightarrow S$, the sequences
of $\mathcal{O}_{S}$-module homomorphisms
\[
0\rightarrow\mathcal{O}_{S}\rightarrow\mathcal{F}_{n}\stackrel{\delta_{S}}{\longrightarrow}\mathcal{F}_{n-1}\rightarrow0,\quad n\geq1,
\]
are all exact. Suppose as in Lemma \ref{lem:Local-vs-Global-codim2}
that each irreducible component of $X\setminus Y=X^{\mathbb{G}_{a}}$
has codimension at least two. Taking global sections over $S$ in
the above exact sequence, we obtain for every $n\geq1$ a long exact
sequence 
\[
0\rightarrow\Gamma(S,\mathcal{O}_{S})=A_{0}\rightarrow\Gamma(S,\mathcal{F}_{n})=F_{n}\stackrel{\Gamma(S,\delta_{S})=\partial}{\longrightarrow}\Gamma(S,\mathcal{F}_{n-1})=F_{n-1}\stackrel{d_{1,n}}{\rightarrow}H_{\mathrm{\acute{e}t}}^{1}(S,\mathcal{O}_{S})\rightarrow\cdots
\]
in which the coboundary homomorphism $d_{1,n}:F_{n-1}\rightarrow H_{\mathrm{\acute{e}t}}^{1}(S,\mathcal{O}_{S})$
maps the constant section $1$ to the isomorphism class of the $\mathbb{G}_{a,k}$-torsor
$\rho:Y\rightarrow S$ in $H_{\mathrm{\acute{e}t}}^{1}(S,\mathcal{O}_{S})$.
This provides a cohomological interpretation of the lack surjectivity
of the homomorphism $\partial:F_{n}\rightarrow F_{n-1}$ together
with an identification $\mathrm{Im}(\partial|_{F_{n}})=\mathrm{Ker}(d_{1,n})$.
\begin{example}
\label{subsec:Rees-algebra-SL2} Let 
\[
\mathrm{SL}_{2}=\left\{ M=\left(\begin{array}{cc}
x & u\\
y & v
\end{array}\right)\in\mathcal{M}_{2}(k),\,\det M=1\right\} \cong\mathrm{Spec}(k[x,y,u,v]/(xv-yu-1)).
\]
The projection $f=\mathrm{pr}_{x,y}:\mathrm{SL}_{2}\rightarrow S=\mathbb{A}_{k}^{2}\setminus\{(0,0)\}$
is a $\mathbb{G}_{a}$-torsor for the $\mathbb{G}_{a,S}$-action $\mu$
defined by right multiplication with unipotent upper triangular matrices.
Let $\left\{ \mathcal{F}_{n}\right\} _{n\geq0}$ be the corresponding
ascending filtration of $\mathcal{A}=f_{*}\mathcal{O}_{\mathrm{SL}_{2}}$.
By Proposition \ref{prop:Ga-torsors-Rees}, $\mathcal{F}_{1}$ is
a locally free sheaf of rank $2$ on $S$, and the Rees algebra $\mathcal{R}(\mathrm{SL}_{2},\mu)=\bigoplus_{n\geq0}\mathcal{F}_{n}$
is isomorphic the symmetric algebra of $\mathcal{F}_{1}$. As a consequence
of \cite[Corollary 4.1.1]{Hor64}, $\mathcal{F}_{1}$ is equal to
the restriction to $S$ of a locally free sheaf $\mathcal{E}$ of
rank $2$ on $\mathbb{A}_{k}^{2}$, and since the latter is free by
virtue of \cite{Ses58}, it follows that $\mathcal{F}_{1}\cong\mathcal{O}_{S}^{\oplus2}$.

Explicitly, since the $\mathbb{G}_{a,S}$-torsor $f:\mathrm{SL}_{2}\rightarrow S$
becomes trivial on the cover of $S$ by the principal affine open
subsets $S_{x}=\mathrm{Spec}(k[x^{\pm1},y])$ and $S_{y}=\mathrm{Spec}(k[x,y^{\pm1}])$,
with equivariant trivializations 
\[
\mathrm{SL}_{2}|_{S_{x}}\cong S_{x}\times\mathrm{Spec}(k[x^{-1}u])\quad\textrm{and}\quad\mathrm{SL}_{2}|_{S_{y}}\cong S_{y}\times\mathrm{Spec}(k[y^{-1}v]),
\]
it follows that the sub-$\mathcal{O}_{S}$-module $\mathcal{F}_{1}$
of $\mathcal{A}$ is the extension of $\mathcal{O}_{S}$ by itself
with trivializations $\mathcal{F}_{1}|_{S_{x}}\cong\mathcal{O}_{S_{x}}\cdot x^{-1}u\oplus\mathcal{O}_{S_{x}}$
and $\mathcal{F}_{1}|_{S_{y}}\cong\mathcal{O}_{S_{y}}\cdot y^{-1}v\oplus\mathcal{O}_{S_{y}}$
and transition matrix 
\[
M=\left(\begin{array}{cc}
1 & -x^{-1}y^{-1}\\
0 & 1
\end{array}\right)\in\mathrm{GL}_{2}\left(k[x^{\pm1},y^{\pm1}]\right).
\]
Since $M$ is equal to the product 
\[
M=M_{x}\cdot M_{y}=\left(\begin{array}{cc}
x^{-1} & 0\\
-y & x
\end{array}\right)\cdot\left(\begin{array}{cc}
x & -y^{-1}\\
y & 0
\end{array}\right)
\]
where $M_{x}\in\mathrm{GL}_{2}(k[x^{\pm1},y])$ and $M_{y}\in\mathrm{GL}_{2}(k[x,y^{\pm1}])$,
we see that $\mathcal{F}_{1}$ is equal to the free sub-$\mathcal{O}_{S}$-module
of $\mathcal{A}$ generated by $u$ and $v$, so that $\mathcal{R}(\mathrm{SL}_{2},\mu)\cong\mathcal{O}_{S}[u,v]$.

Composing with the structure map $S\rightarrow\mathrm{Spec}(k)$,
we view $X=\mathrm{SL}_{2}$ as the normal affine $k$-variety with
$\mathbb{G}_{a}$-action associated to the locally nilpotent $k$-derivation
$\partial=x\frac{\partial}{\partial u}+y\frac{\partial}{\partial v}$
of its coordinate ring $A=k[x,y,u,v]/(xv-yu-1)$. The associated filtration
$\left\{ F_{n}\right\} _{n\geq0}$ is given by $A_{0}=F_{0}=\mathrm{Ker}\partial=k[x,y]$
and 
\begin{align*}
F_{n} & =\mathrm{Ker}\partial^{n+1}=\sum_{p+q=n}A_{0}\cdot u^{p}v^{q}+F_{n-1},\quad n\geq1.
\end{align*}
The Rees algebra $R(A,\partial)$ is thus equal to the quotient of
the polynomial ring $A_{0}[u,v,\upsilon]$, endowed with the grading
given by the weights $(\omega_{u},\omega_{v},\omega_{\upsilon})=(1,1,1)$,
by the principal homogeneous ideal generated by $xv-yu-\upsilon$,
hence to the polynomial ring $A_{0}[u,v]=\Gamma(S,\mathcal{R}(\mathrm{SL}_{2},\mu))$.
\end{example}

\section{Examples and Applications}

In this section, we first illustrate the computation and geometric
properties of Rees algebras on a series of classical examples in the
study of additive group actions on affine varieties, with a particular
focus on the interplay between the relative and absolute Rees algebras
and the construction of vector bundles of rank two on certain geometric
quotients. We then consider an application of Rees algebras to the
construction of families of affine extensions of $\mathbb{G}_{a}$-torsors
over punctured smooth surfaces.

\subsection{\label{subsec:Danielewski-hypersurfaces}Danielewski hypersurfaces
in $\mathbb{A}_{k}^{3}$}

Given a polynomial $P\in k[x,y]$ such that $P(0,y)$ is non-constant,
with simple roots, and an integer $n\geq1$, we let $S_{n,P}$ be
the smooth surface in $\mathbb{A}_{k}^{3}=\mathrm{Spec}(k[x,y,z])$
with equation $x^{n}z=P(x,y)$. For every nonzero polynomial $q(x)\in k[x]$,
the surface $S_{n,P}$ is equipped with a nontrivial $\mathbb{G}_{a}$-action
$\mu$ associated to the locally nilpotent $k[x]$-derivation 
\[
q(x)\partial_{n,P}=q(x)(x^{n}\tfrac{\partial}{\partial y}+\frac{\partial P}{\partial y}(x,y)\tfrac{\partial}{\partial z})
\]
of its coordinate ring $A_{n,P}$. Letting $d=\deg_{y}(P)$, the corresponding
ascending filtration of $A_{n,P}$ is given by $A_{0}=F_{0}=k[x]$
and 
\[
F_{dn+k}=k[x]\cdot y^{dn+k}+k[x]\cdot z^{n}+F_{dn+k-1}.
\]
Let $k[x][y,\upsilon,z]$ be endowed with the grading given by the
weights $(\omega_{x},\omega_{y},\omega_{\upsilon},\omega_{z})=(0,1,1,d)$
and let $\tilde{P}(x,y,\upsilon)\in k[x][y,\upsilon]$ be the unique
homogeneous polynomial with respect to the induced grading such that
$P(x,y)=\tilde{P}(x,y,1)$. The Rees algebra $R(A_{n,P},\partial_{n,P})$
is isomorphic to the quotient of $A_{0}[y,\upsilon,z]$ by the principal
homogeneous ideal generated by $x^{n}z-\tilde{P}(x,y,\upsilon)$.
So $\mathrm{Proj}_{k}(R(A_{n,P},\partial_{n,P}))$ is isomorphic to
the closed sub-scheme $\overline{S}_{n,P}$ of $\mathbb{A}_{k}^{1}\times\mathbb{P}(1,1,d)=\mathrm{Proj}_{k}(A_{0}[\upsilon,y,z])$
with equation $x^{n}z-\tilde{P}(x,y,\upsilon)=0$, in which $S_{n,P}$
embeds as the complement of the relative hyperplane section 
\[
\{\upsilon=0\}\cap\overline{S}_{n,P}=\left\{ x^{n}z-P(x,y,0)=0\right\} .
\]
The fiber of $\mathrm{pr}_{x}:\overline{S}_{n,P}\rightarrow\mathbb{A}_{k}^{1}$
over the origin is equal to the union of $\deg(\tilde{P}(0,y,\upsilon))$
copies of the projective line $\mathbb{P}_{k}^{1}$ all intersecting
at the point $[0:0:1]\in\mathbb{P}(1,1,d)$.

Since $P(0,y)$ has simple roots, the $\mathbb{G}_{a}$-action associated
to $\partial_{n,P}$ is fixed point free and the $\mathbb{G}_{a}$-invariant
projection $\mathrm{pr}_{x}:S_{n,P}\rightarrow\mathbb{A}_{k}^{1}$
factors through a $\mathbb{G}_{a}$-torsor $\rho:S_{n,P}\rightarrow\breve{\mathbb{A}}_{k}^{1}$
over the irreducible non-separated curve $\delta:\breve{\mathbb{A}}_{k}^{1}\rightarrow\mathbb{A}_{k}^{1}$
obtained from $\mathbb{A}_{k}^{1}=\mathrm{Spec}(k[x])$ by replacing
the origin $\{0\}$ by as many disjoint copies as there are irreducible
components in the fiber $\mathrm{pr}_{x}^{-1}(\{0\})\cong\mathrm{Spec}(k[y,z]/(P(0,y)))$
\cite{Du05,DuPo09}. We can thus consider the action $\mu$ as being
given by a locally nilpotent $\mathcal{O}_{\breve{\mathbb{A}}_{k}^{1}}$-derivation
$\breve{\partial}$ of $\rho_{*}\mathcal{O}_{S_{n,P}}$. By Proposition
\ref{prop:Ga-torsors-Rees}, the Rees algebra $\mathcal{R}(S_{n,P},\mu)$
is then canonically isomorphic to the symmetric algebra $\mathrm{Sym}^{\cdot}\mathcal{F}_{1}$
of the locally free sheaf $\mathcal{F}_{1}=\mathcal{K}er\breve{\partial}^{2}$
of rank $2$ on $\breve{\mathbb{A}}_{k}^{1}$ which fits in the exact
sequence 
\[
0\rightarrow\mathcal{O}_{\breve{\mathbb{A}}_{k}^{1}}\rightarrow\mathcal{F}_{1}\stackrel{\breve{\partial}}{\rightarrow}\mathcal{O}_{\breve{\mathbb{A}}_{k}^{1}}\rightarrow0.
\]
By Lemma \ref{lem:Local-vs-Global-codim2}, we have $R(A_{n,P},\partial_{n,P})\cong\Gamma(V,\mathcal{O}_{V})$
where $p:V=\mathrm{Spec}_{\breve{\mathbb{A}}_{k}^{1}}(\mathrm{Sym}^{\cdot}\mathcal{F}_{1})\rightarrow\breve{\mathbb{A}}_{k}^{1}$
is the vector bundle of rank $2$ on $\breve{\mathbb{A}}_{k}^{1}$
determined by $\mathcal{F}_{1}$.

If $\deg P(0,y)\geq2$, then 
\[
R(A_{n,P},\partial_{n,P})\otimes_{A_{0}}(A_{0}/(x))\cong k[y,\upsilon,z]/(\tilde{P}(0,y,\upsilon))
\]
is not a polynomial ring in two variables over $k=A_{0}/(x)$. So
$R(A_{n,P},\partial_{n,P})$ is not isomorphic to a polynomial ring
in two variables over $A_{0}$, which implies that $V$ is a nontrivial
vector bundle over $\breve{\mathbb{A}}_{k}^{1}$, since otherwise
$\Gamma(V,\mathcal{O}_{V})\cong R(A_{n,P},\partial_{n,P})$ would
be isomorphic to a polynomial ring in two variables over $\Gamma(\breve{\mathbb{A}}_{k}^{1},\mathcal{O}_{\breve{\mathbb{A}}_{k}^{1}})=A_{0}$.

Otherwise, if $\deg P(0,y)=1$, then $\mathrm{pr}_{x}:S_{n,P}\rightarrow\mathbb{A}_{k}^{1}=\mathrm{Spec}(A_{0})$
is a $\mathbb{G}_{a}$-torsor for the $\mathbb{G}_{a}$-action determined
by the $k$-derivation $\partial_{n,P}$, hence is the trivial one
since $\mathrm{Spec}(A_{0})$ is affine. It follows in turn that $V$
is the trivial rank $2$ vector bundle on $\mathrm{Spec}(A_{0})$.
In the special case where $P$ is equal to the constant polynomial
$y$, $S_{n,P}$ is isomorphic to $\mathbb{A}_{k}^{2}=\mathrm{Spec}(k[x,z])$
and the $\mathbb{G}_{a}$-action given by $q(x)\partial_{n,P}$ concides
with that given by the locally nilpotent $k[x]$-derivation $\partial=q(x)\frac{\partial}{\partial z}$.
It is a classical result \cite{Ren68} that every $\mathbb{G}_{a}$-action
on $\mathbb{A}_{k}^{2}$ is conjugate to an action defined by a locally
nilpotent derivation of this form. The corresponding filtration is
given by $A_{0}=F_{0}=k[x]$ and 
\begin{align*}
F_{1} & =k[x]y\oplus k[x]\cdot1\\
F_{n} & =k[x]\cdot y^{n}\oplus F_{n-1}\cong\mathrm{Sym}_{F_{0}}^{n}F_{1},\quad\textrm{for }n\geq2
\end{align*}
so that we have an isomorphism of graded $k[x]$-algebras $R(k[x,z],\partial)\cong k[x][y,\upsilon]$,
where $\upsilon$ and $y$ both have homogeneous degree $1$.

\subsection{\label{exa:Ga-extension-preEx} A smooth affine threefold whose geometric
quotient is quasi-projective but not quasi-affine}

It is known in general that the geometric quotient $X/\mathbb{G}_{a}$
of a proper $\mathbb{G}_{a}$-action on a factorial affine variety
is a quasi-affine variety (see e.g. \cite{DeFiGe94}). For smooth
affine threefolds $X$, factorial or not, it is a consequence of Chow's
Lemma that the algebraic space geometric quotient $X/\mathbb{G}_{a}$
of a proper $\mathbb{G}_{a}$-action is a quasi-projective surface.
In this subsection, we consider a simple example of a smooth non-factorial
affine threefold $X$ endowed with a proper $\mathbb{G}_{a}$-action,
whose geometric quotient is a smooth quasi-projective surface which
is not quasi-affine.

Let $S_{0}=\mathrm{Spec}(k[x,y])$ and let $X\subset\mathbb{A}_{S_{0}}^{3}=\mathrm{Spec}(k[x,y][w_{1},w_{2},w_{3}])$
be the smooth threefold defined by the system of equations
\[
\begin{cases}
yw_{3}-w_{1}w_{2} & =0\\
xw_{2}-y(yw_{1}+1) & =0\\
xw_{3}-w_{1}(yw_{1}+1) & =0.
\end{cases}
\]
The threefold $X$ can be endowed with a fixed point free $\mathbb{G}_{a,S_{0}}$-action
$\mu:\mathbb{G}_{a,S_{0}}\times_{S_{0}}X\rightarrow X$ induced by
the locally nilpotent $k[x,y]$-derivation 
\[
\partial=x\tfrac{\partial}{\partial w_{1}}+y^{2}\tfrac{\partial}{\partial w_{2}}+(2yw_{1}+1)\tfrac{\partial}{\partial w_{3}}
\]
of its coordinate ring $A$. The ring of invariants $A_{0}=\mathrm{Ker}\partial$
is equal to $k[x,y]$. The Rees algebra $R(A,\partial)$ is isomorphic
to the quotient of the polynomial ring $A_{0}[\upsilon,W_{1},W_{2},W_{3}]$
in four variables over $A_{0}$ with weights $(\omega_{\upsilon},\omega_{W_{1}},\omega_{W_{2}},\omega_{W_{3}})=(1,1,1,2)$
by the homogeneous ideal $I$ generated by the polynomials $xW_{2}-y(yW_{1}+\upsilon)$,
$yW_{3}-W_{1}W_{2}$ and $xW_{3}-W_{1}(yW_{1}+\upsilon)$. So $\mathrm{Proj}_{k}(R(A,\partial))$
is isomorphic to the closed sub-scheme $Y$ of 
\[
\mathbb{A}_{k}^{2}\times\mathbb{P}(1,1,1,2)=\mathrm{Proj}_{A_{0}}(A_{0}[\upsilon,W_{1},W_{2},W_{3}])
\]
defined by the vanishing of these polynomials. The threefold $X$
embeds in $Y$ as the complement of the relative hyperplane section
$V_{+}(\upsilon)\cap Y\subset\mathbb{P}(1,1,2)$, defined by the equations
$xW_{2}-y^{2}W_{1}=0$, $yW_{3}-W_{1}W_{2}=0$ and $xW_{3}-yW_{1}^{2}=0$.

It is easily seen from this description that the restriction of $\overline{\pi}=\mathrm{pr}_{\mathbb{A}_{k}^{2}}:Y\rightarrow S_{0}=\mathbb{A}_{k}^{2}$
over the complement of the origin $o=\{(0,0)\}$ is a Zariski locally
trivial $\mathbb{P}^{1}$-bundle having $\{\upsilon=0\}$ as a section,
so that the restriction of the algebraic quotient morphism $\pi=\mathrm{pr}_{x,y}:X\rightarrow S_{0}$
over $S_{0}\setminus\{o\}$ is a $\mathbb{G}_{a}$-torsor $P\rightarrow S_{0}\setminus\{o\}$.
More explicitly, letting $S_{0,x}=\mathrm{Spec}(k[x^{\pm1},y])$ and
$S_{0,y}=\mathrm{Spec}(k[x,y^{\pm1}])$, we have $\mathbb{G}_{a}$-equivariant
isomorphisms 
\[
\begin{cases}
\pi^{-1}(S_{0,x})\cong\mathrm{Spec}(k[x^{\pm1},y][x^{-1}w_{1}]) & \cong S_{0,x}\times\mathbb{G}_{a}\\
\pi^{-1}(S_{0,y})\cong\mathrm{Spec}(k[x,y^{\pm1}][-y^{-2}w_{2}]) & \cong S_{0,y}\times\mathbb{G}_{a}
\end{cases}
\]
where $\mathbb{G}_{a}$ acts on $S_{0,x}\times\mathbb{G}_{a}$ and
$S_{0,y}\times\mathbb{G}_{a}$ by translations on the second factor.
On the other hand, the scheme-theoretic fiber $\overline{\pi}^{-1}(o)\subset\mathbb{P}(1,1,1,2)$
is the union of the surface $\{W_{1}=0\}\cong\mathbb{P}(1,1,2)$ and
the line $\{\upsilon=W_{2}=0\}$. It follows that $\pi^{-1}(o)=\overline{\pi}^{-1}(o)\setminus V_{+}(\upsilon)$
is isomorphic to $\mathbb{A}_{k}^{2}=\mathrm{Spec}(k[w_{2},w_{3}])$.
In particular the class group of $X$ is isomorphic to $\mathbb{Z}$,
generated by the class of the divisor $\pi^{-1}(o)$, and the algebraic
quotient morphism $\pi:X\rightarrow S_{0}$ is not the geometric quotient
of the fixed point free action $\mu$ on $X$. Let 
\[
\tau=\mathrm{pr}_{S_{0}}:S_{1}=\{xv-yu=0\}\subset S_{0}\times\mathbb{P}_{[u:v]}^{1}\rightarrow S_{0}
\]
be the blow-up of the origin $o\in S_{0}$. The morphism $\pi$ lifts
to a morphism 
\[
\tilde{\pi}:X\rightarrow S_{1},\quad(x,y,w_{1},w_{2},w_{3})\mapsto((x,y),[yw_{1}+1:w_{2}]).
\]
which maps $\pi^{-1}(o)$ dominantly onto the exceptional divisor
$E\cong\tau^{-1}(o)\cong\mathrm{Proj}(k[u,v])$ of $\tau$.
\begin{lem}
The morphism $\tilde{\pi}:X\rightarrow S_{1}$ factors through a Zariski
locally trivial $\mathbb{G}_{a}$-torsor $\rho:X\rightarrow S$ over
the smooth quasi-projective but not quasi-affine surface $S=S_{1}\setminus\{o_{1}\}$,
where $o_{1}=((0,0),[0:1])\in E\subset S_{1}$.
\end{lem}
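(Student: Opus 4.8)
The plan is to check that $\tilde\pi$ is $\mathbb{G}_a$-invariant with image exactly $S:=S_1\setminus\{o_1\}$, so that it factors as $X\xrightarrow{\rho}S\hookrightarrow S_1$, and then to trivialize $\rho$ over an explicit two-member affine open cover of $S$, producing global slices from the defining equations of $X$.

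For invariance: since $\tau\circ\tilde\pi=\pi$ and $\pi$ is $\mathbb{G}_a$-invariant (its ring of invariants is $A_0=k[x,y]$), the two morphisms $\mathbb{G}_a\times X\to S_1$ given by $\tilde\pi\circ\mu$ and by $\tilde\pi$ composed with the projection $\mathbb{G}_a\times X\to X$ are both liftings along $\tau$ of one and the same morphism to $S_0$; they agree over the dense open $S_0\setminus\{o\}$, where $\tau$ is an isomorphism, hence they coincide, so $\tilde\pi$ is invariant. For the image: over $S_0\setminus\{o\}$, $\tilde\pi$ agrees with the restriction of $\pi$, which is the $\mathbb{G}_a$-torsor $P\to S_0\setminus\{o\}$ discussed above and in particular surjects onto $S_1\setminus E$; over $E=\tau^{-1}(o)$ one has $\tilde\pi^{-1}(E)=\pi^{-1}(o)=\mathrm{Spec}(k[w_2,w_3])$, on which $y=w_1=0$, so $\tilde\pi$ there is $(w_2,w_3)\mapsto((0,0),[1:w_2])$, with image $E\setminus\{[0:1]\}=E\setminus\{o_1\}$. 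Hence $\tilde\pi(X)=(S_1\setminus E)\cup(E\setminus\{o_1\})=S$, the point $o_1$ is omitted, and $\rho\colon X\to S$ is a surjective $\mathbb{G}_a$-invariant morphism.

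Next I would cover $S$ by the affine opens $W_1:=\tau^{-1}(S_{0,y})\cong S_{0,y}=\mathrm{Spec}(k[x,y^{\pm1}])$ and $W_2:=(S_0\times\{u\neq0\})\cap S_1=\mathrm{Spec}(k[x,t])$, where $t=v/u$ and $y=xt$ (note $o_1\notin W_2$, as $u=0$ at $o_1$); these cover $S$ because a point of $S_1$ with $u=0$ has $x=0$ and, being distinct from $o_1$, is of the form $((0,y),[0:1])$ with $y\neq0$, hence lies in $\tau^{-1}(S_{0,y})$. Over $W_1$, $\rho$ coincides with $\pi$, which by the trivialization recalled above is the trivial torsor $\pi^{-1}(S_{0,y})\cong\mathrm{Spec}(k[x,y^{\pm1}][-y^{-2}w_2])$ with slice $-y^{-2}w_2$. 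Over $W_2$, one finds $\rho^{-1}(W_2)=\pi^{-1}(S_{0,x})\cup\pi^{-1}(o)$, which is contained in $\{x\neq0\}\cup\{yw_1+1\neq0\}$ because $y=0$ on $\pi^{-1}(o)$; on $\{x\neq0\}$ the function $w_1/x$ and on $\{yw_1+1\neq0\}$ the function $w_3/(yw_1+1)$ are regular and agree on the overlap by the third defining equation $xw_3=w_1(yw_1+1)$, so they glue to a regular function $\sigma$ on $\rho^{-1}(W_2)$ with $\partial\sigma=(\partial w_1)/x=1$. This global slice makes the $\mathbb{G}_a$-action on $\rho^{-1}(W_2)$ free with categorical quotient $\{\sigma=0\}$, and one checks that $\rho$ restricts to an isomorphism $\{\sigma=0\}\xrightarrow{\sim}W_2$ (it is an isomorphism both over $W_2\setminus E$, where $\{\sigma=0\}$ is a section of the trivial torsor $\pi^{-1}(S_{0,x})$, and over $E\cap W_2=E\setminus\{o_1\}$, where it is $\{w_3=0\}=\mathrm{Spec}(k[w_2])\to\mathrm{Spec}(k[t])$ using $\partial|_{\pi^{-1}(o)}=\partial_{w_3}$); hence $\rho^{-1}(W_2)\cong W_2\times\mathbb{G}_a$ equivariantly. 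Assembling the two charts, $\rho\colon X\to S$ is a Zariski locally trivial $\mathbb{G}_a$-torsor, and by construction $\tilde\pi$ factors through it.

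Finally, $S_1$ is the blow-up of the smooth affine surface $S_0=\mathbb{A}^2_k$ at a point, hence a smooth surface projective over $S_0$, so $S=S_1\setminus\{o_1\}$ is smooth and quasi-projective; and $S$ is not quasi-affine because, $S_1$ being normal and $o_1$ a closed point, $\Gamma(S,\mathcal O_S)=\Gamma(S_1,\mathcal O_{S_1})=\Gamma(S_0,\mathcal O_{S_0})=k[x,y]$, while the canonical morphism $S\to\mathrm{Spec}\,\Gamma(S,\mathcal O_S)=S_0$ is $\tau|_S$, which contracts the curve $E\setminus\{o_1\}\cong\mathbb{A}^1_k$ to the point $o$ and so is not injective, a fortiori not an open immersion. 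The steps I expect to be delicate are the exact determination of the image of $\tilde\pi$ — that precisely the single point $o_1$ is missing — and the construction of the slice $\sigma$ over $W_2$, which hinges on the third defining equation forcing $w_1/x$ to extend regularly across $\pi^{-1}(o)$; once these are settled, Zariski local triviality is immediate from the two affine charts and non-quasi-affineness is the short formal argument above. Alternatively, after establishing invariance and the image, one may invoke the fixed point freeness of the action together with \cite{LMB00} to obtain $X/\mathbb{G}_a$ as an étale-locally trivial torsor, identify it with $S$ via Zariski's Main Theorem, and then upgrade to Zariski triviality using that $\mathbb{G}_a$-torsors over affine schemes are trivial.
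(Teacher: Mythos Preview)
Your argument is correct in spirit and reaches the right conclusion, but it takes a different route from the paper and contains one slip. The paper argues softly: it observes that $\tilde\pi$ is $\mathbb{G}_a$-invariant, computes the image as $S=S_1\setminus\{o_1\}$ exactly as you do, then notes that each fibre of $\rho:X\to S$ is a single $\mathbb{G}_a$-orbit and that $\rho$ is smooth, and concludes directly that $\rho$ is a $\mathbb{G}_a$-torsor (Zariski local triviality then being automatic for $\mathbb{G}_a$). You instead construct explicit slices over a two-chart cover, which is more hands-on and has the virtue of displaying the trivializations; this also sidesteps any appeal to the ``smooth with one-orbit fibres $\Rightarrow$ torsor'' principle.

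The slip is your identification $\rho^{-1}(W_2)=\pi^{-1}(S_{0,x})\cup\pi^{-1}(o)$. Since $W_2=\{u\neq0\}$ and $\tilde\pi$ sends a point to $[yw_1+1:w_2]$, one has $\rho^{-1}(W_2)=\{yw_1+1\neq0\}$, which is a \emph{proper} open subset of $\pi^{-1}(S_{0,x})\cup\pi^{-1}(o)$: for instance $(x,y,-1/y,0,0)$ with $x,y\neq0$ lies in $\pi^{-1}(S_{0,x})$ but has $yw_1+1=0$. Fortunately this does not damage your proof: your glued function $\sigma$ is in fact just $w_3/(yw_1+1)$ on $\rho^{-1}(W_2)=\{yw_1+1\neq0\}$ (no gluing is needed), one checks directly from $\partial w_3=2yw_1+1$, $\partial(yw_1+1)=xy$ and the relation $xw_3=w_1(yw_1+1)$ that $\partial\sigma=1$, and the zero locus $\{\sigma=0\}$ lands in $\{yw_1+1\neq0\}$ anyway, so your section check over $W_2\setminus E$ and $E\cap W_2$ goes through unchanged. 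With this correction the explicit trivialization stands, and the remainder --- image computation, invariance, and the non-quasi-affineness via $\Gamma(S,\mathcal{O}_S)=k[x,y]$ and the contraction of $E\setminus\{o_1\}$ --- matches the paper.
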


\begin{proof}
The induced $\mathbb{G}_{a}$-action on $\pi^{-1}(o)\cong\mathrm{Spec}(k[w_{2},w_{3}])$
is the translation defined by the locally nilpotent $k$-derivation
$\partial_{w_{3}}$ of $k[w_{2},w_{3}]$. The morphism $\tilde{\pi}:X\rightarrow S_{1}$
is $\mathbb{G}_{a}$-invariant and the induced morphism
\[
\tilde{\pi}|_{\pi^{-1}(o)}:\pi^{-1}(o)=\mathrm{Spec}(k[w_{2},w_{3}])\rightarrow E,\quad(w_{2},w_{3})\mapsto[1:w_{2}]
\]
factors as the composition of the geometric quotient $\pi^{-1}(o)\rightarrow\pi^{-1}(o)/\mathbb{G}_{a}\simeq\mathrm{Spec}(k[w_{2}])$
with the open immersion $\pi^{-1}(o)/\mathbb{G}_{a}\hookrightarrow E$
of $\pi^{-1}(o)/\mathbb{G}_{a}$ as the complement of the point $o_{1}=((0,0),[0:1])\in E$.
It follows that $\tilde{\pi}$ factors through a surjective morphism
$\rho:X\rightarrow S=S_{1}\setminus\{o_{1}\}$ whose fibers all consist
of precisely one $\mathbb{G}_{a}$-orbit. Since $\rho$ is a smooth
morphism, it is thus a $\mathbb{G}_{a}$-torsor. By construction $S$
is smooth and quasi-projective and $\tau|_{S}:S\rightarrow B$ induces
an isomorphism $(\tau|_{S})^{*}:A_{0}\rightarrow\Gamma(S,\mathcal{O}_{S})$.
If $S$ was quasi-affine, then $\tau|_{S}:S\rightarrow S_{0}=\mathrm{Spec}(\Gamma(S,\mathcal{O}_{S}))$
would be an open immersion, which is impossible since $\tau|_{S}$
contracts $E\cap S\cong\mathbb{A}_{k}^{1}$ to the point $o\in S_{0}$.
\end{proof}
By Proposition \ref{prop:Ga-torsors-Rees}, the Rees $\mathcal{O}_{S}$-algebra
$\mathcal{R}(X,\mu)$ is equal to the symmetric algebra of a Zariski
locally free sheaf $\mathcal{F}_{1}$ of rank $2$ on $S$. The corresponding
rank $2$ vector bundle $p:V=\mathrm{Spec}_{S}(\mathrm{Sym}^{\cdot}\mathcal{F}_{1})\rightarrow S$
is nontrivial. Indeed otherwise $\Gamma(V,\mathcal{O}_{V})$ would
be isomorphic to a polynomial ring in two variables over $\Gamma(S,\mathcal{O}_{S})\cong A_{0}$
but on the other hand it follows from Lemma \ref{lem:Local-vs-Global-codim2}
and the description above that 
\[
\Gamma(V,\mathcal{O}_{V})\cong R(A,\partial)\cong A_{0}[\upsilon,W_{1},W_{2},W_{3}]/I
\]
is not a polynomial ring in two variables over $A_{0}$.

\subsection{A triangular $\mathbb{G}_{a}$-action on $\mathbb{A}_{k}^{3}$ and
the Russell cubic threefold}

Let $\mathbb{A}_{k}^{4}=\mathrm{Spec}(k[x,y,z,t])$ be endowed with
the $\mathbb{G}_{a}$-action defined by the triangular locally nilpotent
$k[x,t]$-derivation 
\[
\Delta=x^{2}\tfrac{\partial}{\partial y}+2y\tfrac{\partial}{\partial z}.
\]
The kernel $A_{0}=F_{0}$ of $\Delta$ is equal to $k[x,t,w]$ where
$w=x^{2}z-y^{2}$, and for every $n=i+2j\geq1$, we have 
\[
F_{n}=\mathrm{Ker}\Delta^{n+1}=\sum_{i+2j=n}A_{0}y^{i}z^{j}+F_{n-1}.
\]
The Rees algebra $R(k[x,y,z,t],\Delta)$ is isomorphic to the quotient
of the polynomial ring $A_{0}[y,\upsilon,z]$, where $y$, $\upsilon$
and $z$ have homogenenous degrees $1$, $1$ and $2$ respectively,
by the homogeneous ideal generated by $x^{2}z-y^{2}-w\upsilon^{2}$.

The $\mathbb{G}_{a}$-action on $\mathbb{A}_{k}^{4}$ defined by $\Delta$
is fixed point free outside the $\mathbb{G}_{a}$-invariant plane
\[
P=\{x=y=0\}\cong\mathrm{Spec}(k[z,t]).
\]
The induced $\mathbb{G}_{a}$-action $\mu$ on the quasi-affine fourfold
$\mathbb{A}_{k}^{4}\setminus P$ admits a geometric quotient in the
category of algebraic spaces in the form of an \'etale locally trivial
$\mathbb{G}_{a}$-torsor $\rho:\mathbb{A}_{k}^{4}\setminus P\rightarrow S$
over an algebraic space $S=(\mathbb{A}_{k}^{4}\setminus P)/\mathbb{G}_{a}$.
The latter is the $\mathbb{A}^{1}$-cylinder $\mathfrak{S}\times\mathrm{Spec}(k[t])$
over a $2$-dimensional smooth algebraic space of finite type
\[
\delta:\mathfrak{S}\rightarrow\mathbb{A}_{k}^{2}\setminus\{(0,0)\}=\mathrm{Spec}(k[x,w])\setminus\{(0,0)\}
\]
obtained from $\mathbb{A}_{k}^{2}\setminus\{(0,0\}$ by replacing
the curve $\{x=0\}\cong\mathrm{Spec}(k[w^{\pm1}])$ by the total space
of the \'etale double cover $\mathrm{pr}_{w}:\mathrm{Spec}(k[y,w^{\pm1}]/(y^{2}+w))\cong\mathrm{Spec}(k[y^{\pm1}])\rightarrow\mathrm{Spec}(k[w^{\pm1}])$
(see e.g. \cite{Du-ExQuot19} and the references therein). Let $\left\{ \mathcal{F}_{n}\right\} _{n\geq0}$
be the ascending filtration of $\mathcal{\rho}_{*}\mathcal{O}_{\mathbb{A}_{k}^{4}\setminus P}$
associated to the $\mathbb{G}_{a,S}$-action $\mu$. Since $\rho:\mathbb{A}_{k}^{4}\setminus P\rightarrow S$
is a $\mathbb{G}_{a,S}$-torsor, it follows from Proposition \ref{prop:Ga-torsors-Rees}
that the Rees $\mathcal{O}_{S}$-algebra $\mathcal{R}(\mathbb{A}_{k}^{4}\setminus P,\mu)$
is equal to the symmetric algebra of the rank $2$ \'etale locally
free sheaf $\mathcal{F}_{1}$ on $S$. Let $p:V=\mathrm{Spec}_{S}(\mathrm{Sym}^{\cdot}\mathcal{F}_{1})\rightarrow S$
be the corresponding vector bundle. Since $P$ has pure codimension
$2$ in $\mathbb{A}_{k}^{4}$, we have $\Gamma(V,\mathcal{O}_{V})=R(k[x,y,z,t],\Delta)$
by Lemma \ref{lem:Local-vs-Global-codim2}. Since $R(k[x,y,z,t],\Delta)$
is not isomorphic to a polynomial ring in two variables over $\Gamma(S,\mathcal{O}_{S})=A_{0}$,
it follows that $p:V\rightarrow S$ is a nontrivial vector bundle.\\

The closed subsets $X_{1}$ and $X_{3}$ of $\mathbb{A}_{k}^{4}$
with equations $x^{2}z=y^{2}+t$ and $x^{2}z=y^{2}+t^{3}+x$ are $\mathbb{G}_{a}$-invariant,
respectively isomorphic to $\mathbb{A}_{k}^{3}=\mathrm{Spec}(k[x,y,z])$
and the Russell cubic threefold \cite{KaML97}. The restrictions of
$\rho:\mathbb{A}_{k}^{4}\setminus P\rightarrow S$ to $X_{1}\setminus P=\mathrm{Spec}(k[x,y,z])\setminus\{x=y=0\}$
and $X_{3}\setminus P=X_{3}\setminus\{x=y=t=0\}$ are $\mathbb{G}_{a}$-torsors
over the closed subspaces $S_{1}$ and $S_{3}$ of $S$ whose ideal
sheaves are generated by $w-t$ and $w-(t^{3}+x)$ respectively. These
two spaces are isomorphic to $\mathfrak{S}$ \cite[Lemma 3.2]{DuFa18},
so that $\mathbb{A}_{k}^{3}\setminus\{x=y=0\}$ and $X_{3}\setminus\{x=y=t=0\}$
are \'etale locally trivial $\mathbb{G}_{a}$-torsors over the same
space $\mathfrak{S}$. The Rees algebras for the induced $\mathbb{G}_{a}$-actions
on $X_{1}$ and $X_{3}$ are isomorphic to the quotients of $R(k[x,y,z,t],\Delta)$
by the homogeneous ideals generated by $w-t$ and $w-(t^{3}+x)$ respectively,
hence to 
\[
R_{1}=k[x,t][y,\upsilon,z]/(x^{2}z-y^{2}-t\upsilon^{2})\quad\textrm{and}\quad R_{3}=k[x,t][y,\upsilon,z]/(x^{2}z-y^{2}-(t^{3}+x)\upsilon^{2})
\]
respectively. Since these are not polynomial rings in two variables
over 
\[
\Gamma(\mathfrak{S},\mathcal{O}_{\mathfrak{S}})\cong k[x,t]=\Gamma(X_{1},\mathcal{O}_{X_{1}})^{\mathbb{G}_{a}}=\Gamma(X_{3},\mathcal{O}_{X_{3}})^{\mathbb{G}_{a}},
\]
it follows that the restrictions $V_{1}$ and $V_{3}$ of $V$ to
$S_{1}\cong\mathfrak{S}$ and $S_{3}\cong\mathfrak{S}$ are both nontrivial
vector bundles of rank $2$.
\begin{lem}
The vector bundles $p_{1}:V_{1}\rightarrow\mathfrak{S}$ and $p_{3}:V_{3}\rightarrow\mathfrak{S}$
are not isomorphic.
\end{lem}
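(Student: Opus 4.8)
The plan is to show that $V_{1}$ and $V_{3}$ are not even isomorphic as $\mathfrak{S}$-schemes, which is a fortiori enough, by exhibiting a discrete invariant that separates a suitable ``fibre'' of the two bundles. Suppose, for contradiction, that $\Phi\colon V_{1}\to V_{3}$ is an isomorphism over $\mathfrak{S}$; an isomorphism of vector bundles is in particular such a $\Phi$. Since $\Gamma(\mathfrak{S},\mathcal{O}_{\mathfrak{S}})\cong k[x,t]$ and, by Lemma~\ref{lem:Local-vs-Global-codim2} applied to the $\mathbb{G}_{a}$-varieties $X_{1}$ and $X_{3}$ (whose fixed loci $\{x=y=0\}$ and $\{x=y=t=0\}$ have codimension $\geq 2$), there are identifications $\Gamma(V_{i},\mathcal{O}_{V_{i}})\cong R_{i}$ of graded $k[x,t]$-algebras, taking global sections of $\Phi$ yields an isomorphism of $k[x,t]$-algebras $R_{1}\cong R_{3}$. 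Any such isomorphism fixes $x$ and $t$, hence carries the ideal $xR_{1}$ onto $xR_{3}$ and induces an isomorphism of $k[t]$-algebras $R_{1}/xR_{1}\cong R_{3}/xR_{3}$, i.e. an isomorphism of affine $k$-schemes
\[
\mathrm{Spec}\bigl(k[t,y,\upsilon,z]/(y^{2}+t\upsilon^{2})\bigr)\;\cong\;\mathrm{Spec}\bigl(k[t,y,\upsilon,z]/(y^{2}+t^{3}\upsilon^{2})\bigr),
\]
obtained by setting $x=0$ in the explicit presentations of $R_{1}$ and $R_{3}$ given above (note that the term $x\upsilon^{2}$ in the defining equation of $R_{3}$ vanishes, while $t^{3}+x$ specializes to $t^{3}$).

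I would then derive the contradiction by comparing the singular loci of the two hypersurfaces. Both $y^{2}+t\upsilon^{2}$ and $y^{2}+t^{3}\upsilon^{2}$ are irreducible in $k[t,y,\upsilon,z]$, so the two coordinate rings are integral domains and there are no embedded-component subtleties. The Jacobian criterion gives, for the first hypersurface, singular locus $\{y=\upsilon=0\}\cong\mathbb{A}_{k}^{2}$, which is irreducible; for the second it gives $\{y=t=0\}\cup\{y=\upsilon=0\}$, a union of two copies of $\mathbb{A}_{k}^{2}$ meeting along the line $\{y=t=\upsilon=0\}$, hence reducible with two irreducible components. Since the number of irreducible components of the singular locus is preserved by any isomorphism of schemes, we reach a contradiction; thus no isomorphism $V_{1}\cong V_{3}$ over $\mathfrak{S}$ exists, and in particular $p_{1}\colon V_{1}\to\mathfrak{S}$ and $p_{3}\colon V_{3}\to\mathfrak{S}$ are non-isomorphic as vector bundles.

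The only step that deserves to be written out with some care, and which I do not expect to be a genuine obstacle, is the first one: verifying that under the identification $\Gamma(V_{i},\mathcal{O}_{V_{i}})\cong R_{i}$ the canonical subalgebra $k[x,t]=\Gamma(\mathfrak{S},\mathcal{O}_{\mathfrak{S}})$ coincides with the degree-zero part $A_{0}=\mathrm{Ker}\,\partial_{i}$ of $R_{i}$, so that passing to $R_{i}/xR_{i}$ is intrinsic and corresponds to restricting $V_{i}$ over the closed subspace $\{x=0\}\subset\mathfrak{S}$. Everything else is elementary. One could alternatively try to argue purely on the level of rank-two bundles, by computing the extension classes of $0\to\mathcal{O}_{\mathfrak{S}}\to\mathcal{F}_{1}^{(i)}\to\mathcal{O}_{\mathfrak{S}}\to0$ inside $H^{1}_{\mathrm{\acute{e}t}}(\mathfrak{S},\mathcal{O}_{\mathfrak{S}})$ and invoking the non-isomorphism of the associated $\mathbb{G}_{a}$-torsors (equivalently, of $\mathbb{A}^{3}$ and the Russell cubic), but that route is considerably more delicate because an abstract bundle isomorphism need not respect the distinguished sub-line-bundle $\mathcal{F}_{0}^{(i)}$; the scheme-theoretic shortcut above avoids this entirely, the decisive point being simply the contrast between $t$ and $t^{3}$, which materializes precisely along the locus $\{x=0\}$ over which both bundles are non-trivial.
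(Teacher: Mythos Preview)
Your argument is correct and takes a genuinely different route from the paper. The paper argues that a vector-bundle isomorphism $V_1\cong V_3$ would induce a \emph{graded} $k[x,t]$-algebra isomorphism $R_1\cong R_3$, then passes to the quotients by $(1-\upsilon)$ to obtain $\Gamma(X_1,\mathcal{O}_{X_1})\cong\Gamma(X_3,\mathcal{O}_{X_3})$, contradicting the theorem of Kaliman and Makar-Limanov that the Russell cubic $X_3$ is not isomorphic to $\mathbb{A}_k^3$. You instead quotient by $x$ and distinguish the two resulting hypersurfaces in $\mathbb{A}_k^4$ by an elementary count of irreducible components of their singular loci.

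Your approach is more self-contained, since it avoids invoking the deep Russell-cubic result, and it actually proves the slightly stronger statement that $V_1$ and $V_3$ are not even isomorphic as schemes over $\mathfrak{S}$. The paper's route, by contrast, ties the lemma directly into the exotic-structures narrative and uses the grading coming from the vector-bundle hypothesis in an essential way: the passage from a graded isomorphism $R_1\cong R_3$ to an isomorphism of the quotients by $(1-\upsilon)$ is precisely where one must know that the isomorphism respects the $\mathbb{G}_m$-weight, and your remark about the distinguished sub-line-bundle $\mathcal{F}_0^{(i)}\subset\mathcal{F}_1^{(i)}$ identifies exactly the point that needs care there. Your reduction modulo $x$ sidesteps this entirely, at the price of losing the connection to $X_1$ versus $X_3$.
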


\begin{proof}
Indeed, if $V_{1}$ and $V_{3}$ were isomorphic as vector bundles
then the graded $\Gamma(\mathfrak{S},\mathcal{O}_{\mathfrak{S}})$-algebras
\[
\Gamma(V_{1},\mathcal{O}_{V_{1}})\cong\Gamma(\mathfrak{S},p_{1*}\mathcal{O}_{V_{1}})\quad\textrm{and}\quad\Gamma(V_{3},\mathcal{O}_{V_{3}})\cong\Gamma(\mathfrak{S},p_{3*}\mathcal{O}_{V_{3}})
\]
would be isomorphic. Combined with Lemma \ref{lem:Local-vs-Global-codim2},
this would imply that $R_{1}$ and $R_{3}$ are isomorphic graded
$k[x,t]$-algebras, hence that $k[x,y,z]=\Gamma(X_{1},\mathcal{O}_{X_{1}})\cong R_{1}/(1-\upsilon)R_{1}$
and $\Gamma(X_{3},\mathcal{O}_{X_{3}})\cong R_{3}/(1-\upsilon)R_{3}$
were isomorphic, contradicting the fact the Russel cubic $X_{3}$
is not isomorphic to $\mathbb{A}_{k}^{3}$ \cite{KaML97}.
\end{proof}

\subsection{Winkelmann\textquoteright s proper locally trivial action on $\mathbb{A}_{k}^{5}$}

A $\mathbb{G}_{a}$-action on an affine space $\mathbb{A}_{k}^{n}$
is called a translation if its geometric quotient $\mathbb{A}_{k}^{n}/\mathbb{G}_{a}$
is isomorphic to $\mathbb{A}_{k}^{n-1}$ and $\mathbb{A}_{k}^{n}$
is equivariantly isomorphic to $(\mathbb{A}_{k}^{n}/\mathbb{G}_{a})\times\mathbb{G}_{a}$
on which $\mathbb{G}_{a}$ acts by translations on the second factor.
It is classical that proper $\mathbb{G}_{a}$-actions on $\mathbb{A}_{k}^{2}$
and $\mathbb{A}_{k}^{3}$ are translations. The question whether a
proper $\mathbb{G}_{a}$-action on $\mathbb{A}_{k}^{4}$ is a translation
is still widely open (see e.g. \cite{DFJ14,Ka18} for partial results).
Examples of proper $\mathbb{G}_{a}$-actions on affine spaces $\mathbb{A}_{k}^{n}$,
$n\geq5$, which fail to be translations were constructed by Winkelmann
\cite{Win90}. We consider the simplest of these examples, in dimension
$5$.

Let $A=k[u,v][x,y,z]$ and let $\mu$ be the fixed point free $\mathbb{G}_{a}$-action
on $\mathbb{A}_{k}^{5}=\mathrm{Spec}(A)$ determined by the triangular
locally nilpotent $k[u,v]$-derivation 
\[
\partial=u\tfrac{\partial}{\partial x}+v\tfrac{\partial}{\partial y}+(1+w)\tfrac{\partial}{\partial z},
\]
where $w=xv-yu$.
\begin{lem}
\label{lem:Rees-WinkelA5}The Rees algebra $R(A,\partial)$ is isomorphic
to the quotient of the polynomial ring $A[w,c_{1},c_{2},\upsilon]$
endowed with the grading defined by the weights $\omega_{u}=\omega_{v}=\omega_{w}=\omega_{c_{1}}=\omega_{c_{2}}=0$,
$\omega_{x}=\omega_{y}=\omega_{z}=\omega_{\upsilon}=1$ by the homogeneous
ideal $I$ generated by the polynomials $vc_{1}-uc_{2}-w(w+1)$, $c_{2}x-c_{1}y+wz$,
$w\upsilon+uy-vx$, $c_{2}\upsilon+vz-(1+w)y$ and $c_{1}\upsilon+uz-(1+w)x$.
\end{lem}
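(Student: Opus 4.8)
The plan is to construct an explicit graded homomorphism onto $R(A,\partial)$ and then prove it is an isomorphism, using a Pfaffian structure theorem together with a codimension argument. Write $c_{1}=(1+w)x-uz$ and $c_{2}=(1+w)y-vz$, so that a one-line computation gives $\partial w=\partial c_{1}=\partial c_{2}=0$ and $vc_{1}-uc_{2}=w(w+1)$. First I would record that $A_{0}=\mathrm{Ker}\,\partial=k[u,v,w,c_{1},c_{2}]$, with the single relation $vc_{1}-uc_{2}=w(w+1)$: this follows (and can be reproved) by checking, via the slices $x/u$, $y/v$, $z/(1+w)$ of $\partial$ and the associated Dixmier projections (as used in the proof of Lemma~\ref{lem:Rees-preslice}), that $A_{0}$ agrees with $C:=k[u,v,w,c_{1},c_{2}]/(vc_{1}-uc_{2}-w(w+1))$ after inverting $u$, $v$ or $1+w$, and then applying algebraic Hartogs to the normal (hence $S_{2}$) smooth hypersurface $C$ and the codimension-$2$ locus $\{u=v=w+1=0\}\subset\mathrm{Spec}(C)$. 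Since $\partial^{2}x=\partial^{2}y=\partial^{2}z=0$, the elements $x,y,z,1$ lie in $F_{1}=\mathrm{Ker}\,\partial^{2}$, so $u\mapsto u$, $v\mapsto v$, $w\mapsto w$, $c_{i}\mapsto c_{i}$, $x\mapsto x\upsilon$, $y\mapsto y\upsilon$, $z\mapsto z\upsilon$, $\upsilon\mapsto\upsilon$ defines a graded homomorphism $\Phi\colon P:=A[w,c_{1},c_{2},\upsilon]\to R(A,\partial)\subset A[\upsilon]$; a direct substitution shows that the five generators $r=vc_{1}-uc_{2}-w(w+1)$, $g_{0}=c_{2}x-c_{1}y+wz$, $g_{1}=w\upsilon+uy-vx$, $g_{2}=c_{2}\upsilon+vz-(1+w)y$, $g_{3}=c_{1}\upsilon+uz-(1+w)x$ of $I$ all lie in $\ker\Phi$, so $\Phi$ factors through a graded homomorphism $\overline{\Phi}\colon B:=P/I\to R(A,\partial)$.

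The next step is to show that $B$ is an integral domain of dimension $6$. The key observation is that $I$ is the ideal of $4\times 4$ Pfaffians of the $5\times 5$ alternating matrix
\[
T=\begin{pmatrix}
0 & \upsilon & z & x & y\\
-\upsilon & 0 & -(w+1) & -u & -v\\
-z & w+1 & 0 & c_{1} & c_{2}\\
-x & u & -c_{1} & 0 & -w\\
-y & v & -c_{2} & w & 0
\end{pmatrix},
\]
whose five maximal Pfaffians are, up to sign, precisely $r,g_{0},g_{1},g_{2},g_{3}$. Setting $\upsilon=0$, the Pfaffian ideal $\overline{I}$ of $T|_{\upsilon=0}$ contains the three $2\times 2$ minors of $\bigl(\begin{smallmatrix}x & y & z\\ u & v & 1+w\end{smallmatrix}\bigr)$ together with $r$; since that determinantal ideal has height $2$ and $r$ (being linear in $c_{1}$) is a nonzerodivisor modulo it, $\mathrm{ht}\,\overline{I}=3$, and hence also $\mathrm{ht}\,I=3$. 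By the Buchsbaum--Eisenbud structure theorem for ideals of submaximal Pfaffians, $I$ and $\overline{I}$ are then perfect of grade $3$, so $B$ is Cohen--Macaulay of dimension $6$ and $B/\upsilon B$ is Cohen--Macaulay of dimension $5$; as $B$ is unmixed, $\upsilon$ is a nonzerodivisor on it. Finally, using $g_{1},g_{2},g_{3}$ to solve for $w,c_{1},c_{2}$ (after which $g_{0}$ and $r$ hold identically) gives $B[\upsilon^{-1}]\cong k[u,v,x,y,z][\upsilon^{\pm 1}]=A[\upsilon^{\pm 1}]$, a domain; since $\upsilon$ is a nonzerodivisor, $B$ itself is a domain.

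To conclude, $\overline{\Phi}$ is an isomorphism. By Lemma~\ref{lem:upsilon-inversion}, $R(A,\partial)[\upsilon^{-1}]\cong A[\upsilon^{\pm 1}]$, and under this together with the isomorphism $B[\upsilon^{-1}]\cong A[\upsilon^{\pm 1}]$ above, $\overline{\Phi}[\upsilon^{-1}]$ is the automorphism $x,y,z\mapsto x\upsilon,y\upsilon,z\upsilon$; in particular $\overline{\Phi}$ becomes an isomorphism after inverting $\upsilon$, so $\overline{\Phi}(B)$ has dimension $6=\dim B$ and, $B$ being a domain, $\overline{\Phi}$ is injective. Applying Lemma~\ref{lem:Rees-preslice} and the Remark following it to the local slices $x$, $y$, $z$ over the principal opens $D(u)$, $D(v)$, $D(1+w)$ of $\mathrm{Spec}(A_{0})$ gives $R(A,\partial)_{u}\cong(A_{0})_{u}[x\upsilon,\upsilon]$ and likewise after inverting $v$ or $1+w$; since $\overline{\Phi}(B)$ already contains $u,v,w,c_{1},c_{2}$ (which generate $A_{0}$ by the first paragraph) and $x\upsilon,y\upsilon,z\upsilon,\upsilon$, the injection $\overline{\Phi}$ is bijective after inverting any of $u$, $v$, $1+w$. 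Thus $B$ and $R(A,\partial)$ agree on the complement of $V(u,v,1+w)$ in $\mathrm{Spec}(B)$, whose coordinate ring is $k[x,y,c_{1},c_{2}]$, so this closed set has codimension $2$; since $B$ is Cohen--Macaulay, hence $S_{2}$, every element of $R(A,\partial)$ -- a regular function on that open subset, the two algebras sharing a fraction field -- already lies in $B$. Hence $\overline{\Phi}$ is surjective, and the proof is complete.

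The crux is the second step: exhibiting the alternating matrix $T$ and verifying $\mathrm{ht}\,\overline{I}=3$ so that Buchsbaum--Eisenbud applies; everything afterwards is formal. An alternative that bypasses the structure theorem is to compute the degree modules directly, proving $F_{n}=\sum_{i+j+\ell\le n}A_{0}\,x^{i}y^{j}z^{\ell}$ for all $n\ge 0$ -- again by comparing both sides after inverting $u$, $v$ and $1+w$, where each becomes the degree-$\le n$ part of a polynomial ring in the corresponding slice, and using that $F_{n}$ is reflexive over the normal ring $A_{0}$ -- from which surjectivity of $\overline{\Phi}$ onto $R(A,\partial)=\bigoplus_{n\ge 0}F_{n}\upsilon^{n}$ is immediate.
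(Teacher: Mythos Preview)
Your proof is correct, but it takes a substantially different route from the paper's. The paper first asserts (citing Winkelmann) that $A_{0}=k[u,v,w,c_{1},c_{2}]/(vc_{1}-uc_{2}-w(1+w))$ and that $F_{1}=A_{0}x+A_{0}y+A_{0}z+A_{0}$, so that $R(A,\partial)$ is \emph{a priori} a quotient of $B$; this gives surjectivity of $B\to R(A,\partial)$ at the outset. To show $B$ is a $6$-dimensional domain, the paper simply observes that $w$ and $1+w$ generate the unit ideal, computes directly that $B_{1+w}\cong A_{0}[(1+w)^{-1}][z,\upsilon]$ and $B_{w}\cong A_{0}[w^{-1}][x,y]$ are integral of dimension $6$, and concludes via the injection $B\hookrightarrow B_{w}\times B_{1+w}$. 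Since $R(A,\partial)$ is itself a $6$-dimensional domain, the surjection $B\to R(A,\partial)$ has trivial kernel.

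Your argument trades this elementary comaximal-localization trick for the Buchsbaum--Eisenbud structure theorem, and trades the direct description of $F_{1}$ for an $S_{2}$/Hartogs argument on the codimension-$2$ locus $V(u,v,1+w)$. What you gain is a structural explanation---the ideal $I$ is Gorenstein of codimension $3$, presented by the Pfaffians of your matrix $T$---and a surjectivity proof that does not require knowing the degree modules in advance. What the paper's approach buys is brevity: once one spots that $(w,1+w)=(1)$, the entire integrality and dimension check reduces to two polynomial-ring computations, with no need for Cohen--Macaulayness, Pfaffians, or reflexivity. Your alternative closing remark (computing $F_{n}$ directly via slices and reflexivity) is in fact closer in spirit to what the paper does implicitly when it asserts the form of $F_{1}$.
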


\begin{proof}
By \cite{Win90}, the kernel $A_{0}=F_{0}$ of $\partial$ is generated
by $u$, $v$, $w=xv-yu$, $c_{1}=x(1+w)-uz$ and $c_{2}=y(1+w)-vz$,
with the unique relation\textcolor{blue}{{} }$vc_{1}-uc_{2}-w(1+w)=0$.
Furthermore, since we have 
\[
F_{1}=F_{0}\cdot x+F_{0}\cdot y+F_{0}\cdot z+F_{0},
\]
the Rees algebra $R(A,\partial)$ is generated over $A_{0}$ by $x$,
$y$, $z$ and $\upsilon$. These elements of $A$ satisfy the linear
dependence relations $c_{1}\upsilon+uz-(1+w)x=0$, $c_{2}\upsilon+vz-(1+w)y=0$
and $w\upsilon+uy-vx=0$ over $A_{0}$. Furthermore, the element $c_{2}x-c_{1}y+wz\in F_{1}$
belongs to $F_{0}$, which yields the additional relation $c_{2}x-c_{1}y+wz=0\cdot\upsilon=0$.
It follows that $R(A,\partial)$ is a quotient of the ring $B=k[u,v,w,c_{1},c_{2}][x,y,z,\upsilon]/I$.
The images of $w$ and $(1+w)$ generate the unit ideal in $B$, and
the localizations $B_{(1+w)}\cong A_{0}[(1+w)^{-1}][z,\upsilon]$
and $B_{w}\cong A_{0}[w^{-1}][x,y]$ are integral domains of dimension
$6$. The homomorphism $B\rightarrow B_{w}\times B_{(1+w)}$ is thus
injective, which implies in turn that $B$ is an integral domain of
dimension $6$. Since $R(A,\partial)$ is itself an integral domain
of dimension $6$ as $A$ is of dimension $5$, we conclude that $R(A,\partial)=B$.
\end{proof}
The image of the algebraic quotient morphism $\pi:\mathbb{A}_{k}^{5}\rightarrow Q=\mathrm{Spec}(A_{0})$
is equal to the complement of the codimension $2$ closed subset $W=\{u=v=1+w=0\}\cong\mathrm{Spec}(k[c_{1},c_{2}])$
in the smooth affine quadric 
\[
Q=\{vc_{1}-uc_{2}-w(1+w)=0\}\subset\mathbb{A}_{k}^{5}.
\]
Furthermore, the corestriction $\rho:\mathbb{A}_{k}^{5}\rightarrow S=Q\setminus W$
of $\pi$ is a $\mathbb{G}_{a}$-torsor, whose class in $H^{1}(S,\mathcal{O}_{S})$
is represented by the \v{C}ech $1$-cocycle 
\[
\left\{ \frac{c_{1}}{u(1+w)},-\frac{c_{2}}{v(1+w)},\frac{w}{uv}\right\} \in C^{1}(\mathcal{U},\mathcal{O}_{S})
\]
on the covering $\mathcal{U}$ of $S$ by the principal affine open
subsets $S_{u}$, $S_{v}$ and $S_{1+w}$. Viewing $\mathbb{A}_{k}^{5}$
as a $\mathbb{G}_{a}$-torsor over $S$, it follows from Proposition
\ref{prop:Ga-torsors-Rees} that the quasi-coherent $\mathcal{O}_{S}$-algebra
$\mathcal{R}(\mathbb{A}_{k}^{5},\mu)$ is isomorphic to the symmetric
algebra $\mathrm{Sym}^{\cdot}\mathcal{F}_{1}$ of the rank $2$ locally
free sheaf $\mathcal{F}_{1}=\mathcal{K}er\delta_{S}^{2}$, where $\delta_{S}$
denotes the $\mathcal{O}_{S}$-derivation of $\rho_{*}\mathcal{O}_{\mathbb{A}_{k}^{5}}$
induced by $\partial$. Let $p:V=\mathrm{Spec}_{S}(\mathrm{Sym}^{\cdot}\mathcal{F}_{1})\rightarrow S$
be the corresponding vector bundle.

It follows from the proof of Lemma \ref{lem:Rees-WinkelA5} that the
morphism $\overline{p}:Y=\mathrm{Spec}(R(A,\partial))\rightarrow Q$
induced by the inclusion $A_{0}=\Gamma(Q,\mathcal{O}_{Q})\subset R(A,\partial)$
is vector bundle of rank $2$, which becomes trivial on the cover
of $Q$ by the the principal affine open subsets $Q_{w}$ and $Q_{(1+w)}$,
and whose restriction over $S=Q\setminus W$ coincides with the vector
bundle $p:V=\mathrm{Spec}_{S}(\mathrm{Sym}^{\cdot}\mathcal{F}_{1})\rightarrow S$.
 
\begin{lem}
The vector bundles $\overline{p}:Y=\mathrm{Spec}(R(A,\partial))\rightarrow Q$
and $p:V=\mathrm{Spec}_{S}(\mathrm{Sym}^{\cdot}\mathcal{F}_{1})\rightarrow S$
are nontrivial.
\end{lem}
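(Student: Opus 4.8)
The plan is to reduce both non-triviality statements to the single assertion that $R(A,\partial)$ is not a polynomial ring in two variables over $A_{0}$, and then to derive a contradiction from the presentation obtained in Lemma \ref{lem:Rees-WinkelA5}. First, since the $\mathbb{G}_{a}$-action $\mu$ on $\mathbb{A}_{k}^{5}$ is fixed point free, its fixed locus is empty, so in particular every irreducible component has (vacuously) codimension at least $2$, and Lemma \ref{lem:Local-vs-Global-codim2} yields an isomorphism of graded algebras $\Gamma(V,\mathcal{O}_{V})\cong R(A,\partial)$. On the other hand $W=Q\setminus S$ has codimension $2$ in the smooth affine variety $Q$, hence $\Gamma(S,\mathcal{O}_{S})=\Gamma(Q,\mathcal{O}_{Q})=A_{0}$; and since a vector bundle on the normal variety $Q$ is recovered as the push-forward of its restriction to $S$, the bundle $\overline{p}$ is trivial if and only if $p$ is. It therefore suffices to prove that $\overline{p}$ is not trivial, that is, that $R(A,\partial)=\Gamma(Y,\mathcal{O}_{Y})$ is not isomorphic, as a graded $A_{0}$-algebra, to $A_{0}[T_{1},T_{2}]$ with $T_{1},T_{2}$ homogeneous of degree $1$.

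Suppose it were. Passing to degree-one parts, the $A_{0}$-module $F_{1}=\mathrm{Ker}\,\partial^{2}$ would then be free of rank $2$. By the description recalled in the proof of Lemma \ref{lem:Rees-WinkelA5} one has $F_{1}=A_{0}\cdot x+A_{0}\cdot y+A_{0}\cdot z+A_{0}$, with $\partial x=u$, $\partial y=v$, $\partial z=1+w$ and $\partial 1=0$; since $\mathrm{Ker}(\partial|_{F_{1}})=\mathrm{Ker}\,\partial=A_{0}$, this gives a short exact sequence of $A_{0}$-modules
\[
0\longrightarrow A_{0}\longrightarrow F_{1}\stackrel{\partial}{\longrightarrow}I_{W}\longrightarrow 0,
\]
where $I_{W}=(u,v,1+w)\subset A_{0}$ is the ideal of the reduced codimension-two subvariety $W\cong\mathbb{A}_{k}^{2}$. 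If $F_{1}$ were free of rank $2$, composing a surjection $A_{0}^{2}\twoheadrightarrow F_{1}$ with $\partial$ would exhibit $I_{W}$ as generated by two elements.

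The only substantive point, and the step I expect to be the main obstacle, is therefore to show that $I_{W}$ is not generated by two elements. The normal bundle of $W\cong\mathbb{A}_{k}^{2}$ in $Q$ is free, so by Serre's construction the displayed extension identifies $F_{1}$ with a rank-two vector bundle on $Q$ with $c_{1}=0$ whose second Chern class is the cycle class $[W]\in\mathrm{CH}^{2}(Q)$; since $W$ is a plane contained in the smooth affine quadric fourfold $Q$, this class is nonzero (for instance it maps to a generator of $H^{4}(Q;\mathbb{Z})\cong\mathbb{Z}$), so $F_{1}$ cannot be free and $I_{W}$ cannot be two-generated. Alternatively, should a more self-contained argument be preferred, one checks the same conclusion directly from the explicit presentation $R(A,\partial)\cong A[w,c_{1},c_{2},\upsilon]/I$ of Lemma \ref{lem:Rees-WinkelA5}. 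In either case the assumption that $\overline{p}$, equivalently $p$, is trivial is contradicted, which proves the lemma.
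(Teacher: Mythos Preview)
Your argument is correct and essentially the same as the paper's, though phrased dually. The paper exhibits an explicit section $s:Q\to Y$ of $\overline{p}$ by the $A_{0}$-algebra map $R(A,\partial)\to A_{0}$ sending $(x,y,z,\upsilon)\mapsto(u,v,1+w,0)$; but a section of $\mathrm{Spec}(\mathrm{Sym}^{\cdot}F_{1})$ is the same data as an $A_{0}$-linear map $F_{1}\to A_{0}$, and the one just written is precisely your map $\partial|_{F_{1}}$. Thus the paper's statement ``the zero locus of $s$ equals $W$'' is your statement ``$\mathrm{Im}(\partial|_{F_{1}})=I_{W}$'', and both proofs conclude by the same obstruction: $W$ is not a scheme-theoretic complete intersection in $Q$ (equivalently $I_{W}$ is not two-generated). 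The paper simply cites \cite[Lemma 6.3]{Sri08} for this, whereas you sketch the Chern-class reason behind it, namely that $[W]$ is nonzero in $\mathrm{CH}^{2}(Q)$; this is exactly the content of Srinivas's computation. Your reduction of the nontriviality of $p$ to that of $\overline{p}$ via unique extension of vector bundles across the codimension-two locus $W\subset Q$ is also the paper's argument. The only loose end is your throwaway remark that one could ``check the same conclusion directly from the explicit presentation'' of $R(A,\partial)$: this is not developed and would not bypass the need for the $\mathrm{CH}^{2}(Q)$ computation, so it should be dropped.
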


\begin{proof}
The homomorphism $A_{0}[x,y,z,\upsilon]\rightarrow A_{0}$, $(x,y,z,\upsilon)\mapsto(u,v,(1+w),0)$
induces a homomorphism $R(A,\partial)\rightarrow A_{0}$ defining
a section $s:Q\rightarrow Y$ of $\overline{p}:Y\rightarrow Q$ whose
zero locus is equal to the closed variety $W$. Since $W$ is not
a scheme-theoretic complete intersection in $Q$ (see e.g. \cite[Lemma 6.3]{Sri08}),
it follows that $\overline{p}:Y\rightarrow Q$ is a nontrivial vector
bundle. In particular $R(A,\partial)$ is not isomorphic to a polynomial
ring in two variables over $A_{0}$.  This implies that $p:V\rightarrow S$
is a nontrivial vector bundle. Indeed, otherwise, since $W$ has pure
codimension $2$ in the smooth affine variety $Q$, $V$ would extend
to the trivial vector bundle on $Q$, and then $\Gamma(V,\mathcal{O}_{V})=R(A,\partial)$
would be isomorphic to a polynomial ring in two variables over $A_{0}$. 
\end{proof}

\subsection{Extensions of $\mathbb{G}_{a}$-torsors over punctured surfaces}

In this subsection, we present an application of Rees algebras to
the construction of affine extensions of $\mathbb{G}_{a}$-torsors
over punctured surfaces. The following notion was introduced in \cite{DHK,He15}.
\begin{defn}
Let $(S',o)$ be a pair consisting of the spectrum of a regular local
ring $B$ essentially of finite type and dimension $2$ over an algebraically
closed field $k$ of characteristic zero and its closed point. A \emph{normal
affine} $\mathbb{G}_{a}$-\emph{extension} of a nontrivial $\mathbb{G}_{a}$-torsor
$\rho:P\rightarrow S=S'\setminus\{o\}$ is a $\mathbb{G}_{a}$-equivariant
open embedding $j:P\hookrightarrow X$ into an integral normal $k$-scheme
$X$ equipped with a surjective affine morphism $\pi:X\rightarrow S'$
of finite type and a $\mathbb{G}_{a,S}$-action, such that the commutative
diagram \[\begin{tikzcd}    P \arrow[r,hook,"j"] \arrow[d,"\rho"'] & X \arrow[d,"\pi"] \\ S=S'\setminus\{o\} \arrow[r,hook,"h"] & S' \end{tikzcd}\]
is cartesian, where $h:S\rightarrow S'$ denotes the open inclusion.
\end{defn}

Let $\mu_{P}:\mathbb{G}_{a,S}\times_{S}P\rightarrow P$ be the $\mathbb{G}_{a,S}$-action
on $P$ with corresponding locally nilpotent $\mathcal{O}_{S}$-derivation
$\partial_{P}$ of $\rho_{*}\mathcal{O}_{P}$. It follows from Example
\ref{exa:Relative-Rees-SL2} (see also Example \ref{subsec:Rees-algebra-SL2})
that the Rees algebra $\mathcal{R}(P,\mu)=\bigoplus_{n\geq0}\mathcal{F}_{n}$
is isomorphic to the polynomial ring algebra $\mathcal{O}_{S}[u,v]$
where $u$ and $v$ are homogeneous variables of degree $1$. Furthermore,
by Proposition \ref{prop:Ga-torsors-Rees} a), the associated homogeneous
$\mathcal{O}_{S}$-derivation $\mathcal{R}(\partial_{P})$ of degree
$-1$ of $\mathcal{R}(P,\mu)$ is equal to $x\frac{\partial}{\partial u}+y\frac{\partial}{\partial v}$
for some $x,y\in\Gamma(S,\mathcal{O}_{S})=B$ such that $(x,y)\mathcal{O}_{S}=\mathcal{O}_{S}$.
By Example \ref{exa:Relative-Rees-SL2}, the nontriviality of $\rho:P\rightarrow S$
is equivalent to the property that the radical of $(x,y)B$ is equal
to the maximal ideal $\mathfrak{m}$ of $B$. The image of the element
$\upsilon\in\mathcal{F}_{1}$ in $\mathcal{O}_{S}[u,v]$ is then equal
to $xv-yu$, and $P$ is isomorphic to the closed subscheme of $\mathrm{Spec}(B[u,v])$
defined by the equation $\upsilon=1$. 

Given a normal affine extension $j:P\hookrightarrow X=\mathrm{Spec}(A)$,
where $A$ is a $B$-algebra of finite type, the $\mathbb{G}_{a,S}$-action
on $X$ is determined by a locally nilpotent $B$-derivation $\partial_{X}$
of $A$. We denote by $\{F_{X,n}\}_{n\geq0}$ the corresponding filtration
of $A$ by its $B$-submodules and by $R(A,\partial_{X})=\bigoplus_{n\geq0}F_{X,n}$
the associated Rees $B$-algebra.
\begin{prop}
There is one-to-one correspondence between:

1) Normal affine $\mathbb{G}_{a}$-extensions $j:P\hookrightarrow X=\mathrm{Spec}(A)$
whose Rees algebras $R(A,\partial_{X})$ are finitely generated over
$B$.

2) Normal finitely generated proper graded $B$-subalgebras $R_{X}=\bigoplus_{n\geq0}G_{n}$
of $B[u,v]$ which are stable under the derivation $x\frac{\partial}{\partial u}+y\frac{\partial}{\partial u}$,
containing $\upsilon=xv-yu$ and such that $\tilde{G}_{1}|_{S}=\mathcal{F}_{1}$,
where $\tilde{G}_{1}$ denotes the sheaf of $\mathcal{O}_{S'}$-modules
associated to the $B$-module $G_{1}$.
\end{prop}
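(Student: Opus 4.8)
The plan is to exhibit mutually inverse constructions between the two sets, relying throughout on the description of $\mathcal{R}(P,\mu_P)$ recalled just above. Since $S=S'\setminus\{o\}$ is the punctured spectrum of the normal $2$-dimensional local ring $B$, one has $\Gamma(S,\mathcal{O}_S)=B$ and $\mathcal{F}_1=\mathcal{O}_Su\oplus\mathcal{O}_Sv$, so $\Gamma(S,\mathcal{R}(P,\mu_P))=B[u,v]$ with $u,v$ in degree $1$; I write $\partial:=\mathcal{R}(\partial_P)=x\tfrac{\partial}{\partial u}+y\tfrac{\partial}{\partial v}$, so that $\partial(\upsilon)=0$ and $P\cong\{xv-yu=1\}\subseteq\mathrm{Spec}(B[u,v])$.

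\emph{From (1) to (2).} Given $j\colon P\hookrightarrow X=\mathrm{Spec}(A)$ as in (1), I would use that the defining square is cartesian to identify, after restriction to $S$, the derivation induced by $\partial_X$ with $\partial_P$ and the sheaf $\widetilde{F_{X,n}}|_S$ with $\mathcal{F}_n$. Taking sections over $S$ then produces an injective homomorphism of graded $B$-algebras $\Psi\colon R(A,\partial_X)=\bigoplus_nF_{X,n}\hookrightarrow\bigoplus_n\Gamma(S,\mathcal{F}_n)=B[u,v]$ carrying $R(\partial_X)$ to $\partial$ and the degree-$1$ unit to $\upsilon$. Set $R_X=\bigoplus_nG_n:=\Psi(R(A,\partial_X))$. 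It is finitely generated over $B$ by hypothesis, normal because $A$, hence $R(A,\partial_X)$, is (Corollary to Lemma~\ref{lem:Normalization-Commute-Rees}), stable under $\partial$, and contains $\upsilon$; moreover $\widetilde{G_1}|_S=\mathcal{F}_1$ since flatness of $B\to B_x$ gives $G_1\otimes_BB_x=\ker(\partial_X^2\otimes_BB_x)=\Gamma(D(x),\mathcal{F}_1)$ and likewise over $D(y)$. Finally $R_X$ is proper in $B[u,v]$: otherwise $A\cong B[u,v]/(xv-yu-1)\cong\Gamma(P,\mathcal{O}_P)$ as $B$-algebras, so $\pi\colon X\to S'$ would factor through $S\subsetneq S'$, contradicting its surjectivity.

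\emph{From (2) to (1).} Conversely, given $R_X=\bigoplus_nG_n$ as in (2), the hypothesis $\widetilde{G_1}|_S=\mathcal{F}_1=\mathcal{O}_Su\oplus\mathcal{O}_Sv$ forces $(R_X)_x=B_x[u,v]$ and $(R_X)_y=B_y[u,v]$, so $\mathrm{Frac}(R_X)=\mathrm{Frac}(B[u,v])$ and $\widetilde{R_X}|_S\cong\mathcal{R}(P,\mu_P)$. Since $\partial$ preserves $R_X$, kills $\upsilon$, and $\upsilon\in G_1$ is a nonzerodivisor, $\upsilon\partial$ is a homogeneous locally nilpotent $B$-derivation of $R_X$ of degree $0$ annihilating $1-\upsilon$; I would put $A:=R_X/(1-\upsilon)R_X\cong R_X[\upsilon^{-1}]_0$ (\cite[2.2.5]{EGAII}), with the induced locally nilpotent $B$-derivation $\partial_X$, and $X:=\mathrm{Spec}(A)$. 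Then $A$ is a finitely generated $B$-algebra and a normal domain, being the degree-$0$ part of the normal $\mathbb{Z}$-graded domain $R_X[\upsilon^{-1}]$; localizing over $D(x)$ and $D(y)$ yields $A\otimes_BB_x=B_x[u,v]/(xv-yu-1)$ and similarly over $D(y)$, so $X\times_{S'}S\cong P$ over $S$, $\mathbb{G}_a$-equivariantly by Lemma~\ref{lem:Projective-Ga-action-extension}. Hence $j\colon P=\pi^{-1}(S)\hookrightarrow X$ is a $\mathbb{G}_a$-equivariant open immersion with cartesian defining square, and $\pi$ is affine of finite type; for the surjectivity of $\pi$, equivalently $\upsilon\notin\sqrt{\mathfrak{m}R_X}$, I would pass to $Y:=\mathrm{Proj}_{S'}(R_X)$, which by the above and Proposition~\ref{prop:ReesAlg-P1-fib} is a $\mathbb{P}^1$-fibration over $S'$ with $X=D_+(\upsilon)$ and $\mathcal{O}_Y(V_+(\upsilon))$ of relative degree $1$, and note that $\upsilon$ cannot vanish on the fibre $Y_o$ because (choosing $d$ so that the $d$-th Veronese of $R_X$ is generated in degree $1$) the embedding of $Y_o$ by $\mathcal{O}_Y(d)$ is non-degenerate while $\upsilon^d\notin\mathfrak{m}G_d$.

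\emph{Mutual inverseness, and the main obstacle.} Composing $(1)\to(2)\to(1)$ recovers $A$ from $R_X=\Psi(R(A,\partial_X))$, since $R(A,\partial_X)/(1-\upsilon)R(A,\partial_X)\cong A$ by the lemma on $q_1$ in Subsection~\ref{subsec:Rees-Graded-mor}, compatibly with the derivations and the $S'$-structures. For $(2)\to(1)\to(2)$ one must identify $\Psi(R(A,\partial_X))$ with the given $R_X$; the inclusion $R_X\subseteq\Psi(R(A,\partial_X))$ is immediate, because a homogeneous $g\in G_n$ viewed as $g/\upsilon^n\in A\cong R_X[\upsilon^{-1}]_0$ satisfies $\partial_X^{n+1}(g/\upsilon^n)=\upsilon\,\partial^{n+1}(g)=0$, hence lies in $F_{X,n}$. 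I expect the reverse inclusion to be the crux: given $a\in F_{X,m}$, write $a=h/\upsilon^k$ with $h\in G_k$ homogeneous and $k$ minimal, so $h\notin\upsilon R_X$ and $\partial^{m+1}h=0$; if $k\le m$ then $a=\Psi(h\upsilon^{m-k})$ with $h\upsilon^{m-k}\in G_m$, so it suffices to exclude $k>m$. In that case, on the charts $D(x)$ and $D(y)$, where in the coordinates $(u,\upsilon)$ resp.\ $(v,\upsilon)$ the operator $\partial$ becomes $x\tfrac{\partial}{\partial u}$ resp.\ $y\tfrac{\partial}{\partial v}$, the degree-$k$ part of $\ker\partial^{m+1}$ is manifestly contained in $\upsilon^{k-m}B_x[u,v]$ resp.\ $\upsilon^{k-m}B_y[u,v]$, whence $h/\upsilon^{k-m}\in(R_X)_x\cap(R_X)_y$; as these cover $S$ and $B[u,v]$ has Hartogs' property across the codimension-$2$ point over $o$, $h/\upsilon^{k-m}$ is a section of $\widetilde{R_X}$ over $S$, and it lies in $R_X$ itself because $R_X$ is normal, hence $S_2$, and $\upsilon\notin\sqrt{\mathfrak{m}R_X}$ makes $\upsilon^{k-m}R_X$ saturated in $R_X$ with respect to $\mathfrak{m}$. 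Then $h\in\upsilon R_X$, contradicting minimality of $k$; so $k\le m$ and $F_{X,m}=G_m$, which completes the bijection. The technical heart is thus this last divisibility/saturation argument (intertwined with $\upsilon\notin\sqrt{\mathfrak{m}R_X}$), where the chart computations, Hartogs extension, and normality of $R_X$ all enter.
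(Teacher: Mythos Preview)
Your overall architecture matches the paper's exactly: build $\Psi$ from the cartesian square via Proposition~\ref{prop:EquivariantMor-Rees} to get $(1)\to(2)$, and take $A=R_X/(1-\upsilon)R_X$ with $\partial_X=\partial_P|_A$ for $(2)\to(1)$. The paper, however, simply \emph{asserts} in one line that for the induced derivation one has $R(A,\partial_X)\cong R_X$, and then everything else (normality via Lemma~\ref{lem:upsilon-inversion}, the identification over $S$, and surjectivity of $\pi$ from the strict inclusion $A\subsetneq\Gamma(P,\mathcal{O}_P)$) follows immediately. You correctly recognise that this isomorphism is the crux and attempt to supply the missing argument; that is commendable, and your easy inclusion $R_X\subseteq\Psi(R(A,\partial_X))$ is fine.

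The gap is in the converse inclusion. Your surjectivity argument for $\pi$ claims that $\upsilon^d\notin\mathfrak{m}G_d$ because the embedding of $Y_o$ is non-degenerate, but neither assertion is justified: since $\upsilon=xv-yu$ with $x,y\in\mathfrak{m}$, one has $\upsilon\in\mathfrak{m}B[u,v]_1$, and there is no reason a priori why $\upsilon$ should lie outside $\mathfrak{m}G_1$ (the hypothesis only controls $\tilde G_1|_S$, not $G_1$ itself), nor why $Y_o\hookrightarrow\mathbb{P}(G_d/\mathfrak{m}G_d)$ should be non-degenerate. Moreover, even granting $\upsilon\notin\sqrt{\mathfrak{m}R_X}$, this does not by itself give that $V(\mathfrak{m}R_X)$ has codimension $\ge 2$ in $\mathrm{Spec}(R_X)$, which is what you actually need for the $S_2$/Hartogs step to push the section $h/\upsilon^{k-m}\in\Gamma(S,\widetilde{R_X})=B[u,v]$ into $R_X=\Gamma(S',\widetilde{R_X})$. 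The phrase ``$\upsilon^{k-m}R_X$ saturated in $R_X$ with respect to $\mathfrak{m}$'' conflates two different conditions. Note finally that the paper's own route to surjectivity (strictness of $A\subset\Gamma(P,\mathcal{O}_P)$ forcing $\pi^{-1}(o)\neq\emptyset$) tacitly \emph{uses} $R(A,\partial_X)\cong R_X$, so you cannot borrow it to break the circularity; you would need an independent argument that $V(\mathfrak{m}R_X)$ has codimension $\ge 2$, or a direct proof that $\upsilon B[u,v]\cap R_X=\upsilon R_X$ from normality of $R_X$ together with the hypothesis $\tilde G_1|_S=\mathcal{F}_1$.
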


\begin{proof}
Given a normal affine extension $j:P\hookrightarrow X$ of $\rho:P\rightarrow S$,
the commutativity of the diagram in the definition implies that we
have an injective homomorphism 
\[
j^{*}:\pi_{*}\mathcal{O}_{X}\rightarrow\pi_{*}j_{*}\mathcal{O}_{P}=h_{*}\rho_{*}\mathcal{O}_{P}.
\]
Let $\mu_{X}:\mathbb{G}_{a,S'}\times_{S'}X\rightarrow X$ be the $\mathbb{G}_{a,S'}$-action
on $X$ and let $\{\mathcal{F}_{X,n}\}_{n\geq0}$ be the corresponding
ascending filtration of the $\mathcal{O}_{S'}$-algebra $\mathcal{A}=\pi_{*}\mathcal{O}_{X}$.
The open embedding $j$ being by definition $\mathbb{G}_{a}$-equivariant
with respect to the morphism $\tilde{h}:\mathbb{G}_{a,S}\rightarrow\mathbb{G}_{a,S'}$
induced by the open inclusion $h:S\rightarrow S'$, it follows from
Proposition \ref{prop:EquivariantMor-Rees} that $j^{*}$ induces
an injective homomorphism of graded $\mathcal{O}_{S'}$-algebras
\[
\mathcal{R}(j^{*}):\mathcal{R}(X,\mu_{X})=\bigoplus_{n\geq0}\mathcal{F}_{X,n}\rightarrow h_{*}\mathcal{R}(P,\mu_{P})=\bigoplus_{n\geq0}h_{*}\mathcal{F}_{n}\cong h_{*}\mathcal{O}_{S}[u,v]=\mathcal{O}_{S'}[u,v]
\]
which is equivariant with respect to the associated homogeneous $\mathcal{O}_{S'}$-derivations
$\mathcal{R}(\partial_{X})$ and $h_{*}\mathcal{R}(\partial_{P})$.
Furthermore, since by definition of an affine extension the open embedding
$j$ restricts to an equivariant isomomorphism over $S$, it follows
that $\mathcal{R}(j^{*})$ restricts to an equivariant isomorphism
over $S$. Taking global sections over $S'$, we obtain an injective
homomorphism of graded $B$-algebras 
\[
\Gamma(S',\mathcal{R}(j^{*})):\Gamma(S',\mathcal{R}(X,\mu_{X}))\rightarrow\Gamma(S',\mathcal{O}_{S'}[u,v])=B[u,v],
\]
which is equivariant with respect to the locally nilpotent $B$-derivations
$\mathcal{R}(\partial_{X})$ and $\Gamma(h_{*}\mathcal{R}(\partial_{P}))=x\frac{\partial}{\partial u}+y\frac{\partial}{\partial v}$.
Since $o$ has codimension 2 in the regular scheme $S'$, we have
$\Gamma(S',\mathcal{R}(X,\mu_{X}))=R(A,\partial_{X})$ and $\mathcal{R}(X,\mu_{X})=\widetilde{R(A,\partial_{X})}$.
Since $A$ is normal, so is $R(A,\partial_{X})$ by Lemma \ref{lem:Normalization-Commute-Rees}.
Since $\mathcal{R}(j^{*})$ maps the constant section $1\in\Gamma(S',\mathcal{O}_{S'})$
viewed in $\mathcal{F}_{X,1}$ to the same section viewed in $h_{*}\mathcal{F}_{1}$,
it follows from the definition of $\upsilon$ (see (\ref{eq:Graded-inclusions}))
that $xv-yu\in R(A,\partial_{X})$. If the inclusion $R(A,\partial_{X})\subset B[u,v]$
is an equality, then 
\[
A\cong R(A,\partial_{X})/(1-\upsilon)R(A,\partial_{X})=B[u,v]/(1-\upsilon)B[u,v]\cong\Gamma(P,\mathcal{O}_{P}),
\]
which contradicts the fact that $j:P\hookrightarrow X$ is an affine
extension. Since by Proposition \ref{prop:Ga-torsors-Rees} b), $\mathcal{R}(P,\mu_{P})\cong\mathrm{Sym}^{\cdot}\mathcal{F}_{1}$,
the equality $\widetilde{R(A,\partial_{X})}|_{S}=\mathcal{R}(P,\mu_{P})$
is equivalent to the fact that $\widetilde{F}_{X,1}|_{S}=\mathcal{F}_{X,1}|_{S}=\mathcal{F}_{1}$.
Summing up, independently of whether $R(A,\partial_{X})$ is finitely
generated over $B$ or not, $R(A,\partial_{X})\subset B[u,v]$ is
an integrally closed proper graded $B$-subalgebra of $B[u,v]$, stable
under the derivation $x\frac{\partial}{\partial u}+y\frac{\partial}{\partial u}$,
containing $\upsilon=xv-yu$ and such that $\widetilde{F}_{X,1}|_{S}$.

Conversely, given a finitely generated $B$-subalgebra $R_{X}=\bigoplus_{n\geq0}G_{n}$
of $B[u,v]$ satisfying all these properties, the quotient $A=R_{X}/(1-\upsilon)R_{X}$
is a finitely generated $B$-subalgebra of $B[u,v]/(1-\upsilon)B[u,v]\cong\Gamma(P,\mathcal{O}_{P})$,
stable under the derivation $\partial_{P}$ and such that for the
induced locally nilpotent $B$-derivation $\partial_{X}=\partial_{P}|_{A}$,
we have $R(A,\partial_{X})\cong R_{X}$. By Lemma \ref{lem:upsilon-inversion},
we have $R_{X}[\upsilon^{-1}]\cong A[\upsilon^{\pm1}]$, so that $A$
is normal as $R_{X}$ is normal by assumption. The affine $S'$-scheme
$\pi:X=\mathrm{Spec}(A)\rightarrow S'$ is thus normal and of finite
type, and the morphism $j:P\rightarrow X$ is equivariant. Since $\tilde{G}_{1}|_{S}=\mathcal{F}_{1}$,
it follows that the restriction of $j$ over $S$ is an isomorphism,
so that $j$ is an open embedding of $P$ with complement equal to
$\pi^{-1}(o)$. Finally, the inclusion $A\subset\Gamma(P,\mathcal{O}_{P})$
is strict since otherwise we would have $R_{X}=B[u,v]$. It follows
that $\pi^{-1}(o)$ is not empty, hence that $j:P\hookrightarrow X$
is a normal affine extension of $P$ with finitely generated Rees
$B$-algebra $R(A,\partial_{X})\cong R_{X}$.
\end{proof}
\begin{example}
The graded proper $B$-subalgebra 
\[
R=B[\upsilon,xu,xv,yv]=\bigoplus_{n\geq0}G_{n}
\]
of $B[u,v]$ is generated in degree $1$, stable under the derivation
$R(\partial_{P})=x\frac{\partial}{\partial u}+y\frac{\partial}{\partial v}$
and satisfies $\tilde{G}_{1}|_{S}\cong\mathcal{O}_{S}^{\oplus2}=\mathcal{F}_{1}|_{S}$.
Writing $X=xu$, $Z=xv$ and $Y=yv$, we have $R\cong B[\upsilon,X,Z,Y]/J$
where $J$ is the homogeneous ideal generated by 
\[
xY-yZ,\,yX-x(Z-\upsilon),\,XY-Z(Z-\upsilon).
\]
We thus have 
\[
R/(1-\upsilon)R\cong B[X,Z,Y]/(xY-yZ,yX-x(Z-1),XY-Z(Z-1))
\]
which is easily seen to be smooth by the Jacobian criterion. This
implies in turn by Lemma \ref{lem:Normalization-Commute-Rees} that
$R$ is normal. The induced $\mathbb{G}_{a,S'}$-action on $V=\mathrm{Spec}(R/(1-\upsilon)R)$
is given by the locally nilpotent $B$-derivation 
\[
\partial_{V}=x^{2}\tfrac{\partial}{\partial X}+y^{2}\tfrac{\partial}{\partial Y}+xy\tfrac{\partial}{\partial Z}.
\]
The open embedding $P\hookrightarrow V$ is given by $(u,v)\mapsto(X,Y,Z)=(xu,yv,xv)$
and the fiber of $\pi:V\rightarrow S'$ over the closed point $o$
is isomorphic to the smooth surface with equation $XY-Z(Z-1)=0$ in
$\mathbb{A}_{\kappa}^{3}=\mathrm{Spec}(B/\mathfrak{m}B[X,Y,Z])$,
on which the $\mathbb{G}_{a,S'}$-action on $V$ restricts to the
trivial $\mathbb{G}_{a,\kappa}$-action.
\end{example}

\begin{example}
(See \cite[§ 3.4.1]{DHK}) For every integer $\ell\geq0$, we let
$R_{\ell}$ be the proper graded $B$-subalgebra 
\[
R_{\ell}=B[\upsilon,v,xu,uv,xu^{2},\ldots,xu^{\ell+4}]=\bigoplus_{n\geq0}G_{n}
\]
of $B[u,v]$. It is straightforward to see that $\tilde{G}_{1}|_{S}\cong\mathcal{O}_{S}^{\oplus2}=\mathcal{F}_{1}|_{S}$.
Furthermore, since $R(\partial_{P})(xu^{m})=mx^{2}u^{m-1}\in R_{\ell}$
for every $m=1,\ldots,\ell$ and $R(\partial_{P})(uv)=xv+yu=-\upsilon+2xv\in R_{\ell}$,
we get that $R_{\ell}$ is $R(\partial_{P})$-stable. The open embedding
\[
P\hookrightarrow V_{\ell}=\mathrm{Spec}(R_{\ell}/(1-\upsilon)R_{\ell})
\]
is given by $(u,v)\mapsto(v,xu,uv,xu^{2},\ldots,xu^{\ell+4})$ and
the fiber of $\pi_{\ell}:V_{\ell}\rightarrow S'$ over the closed
point $o$ of $S'$ is isomorphic to $\mathbb{A}_{\kappa}^{2}=\mathrm{Spec}(\kappa[v,uv])$,
where $\kappa=B/\mathfrak{m}B$, on which the induced $\mathbb{G}_{a,S'}$-action
on $V_{\ell}$ restricts to the free $\mathbb{G}_{a,\kappa}$-action
$t\cdot(v,uv)\mapsto(v,uv-t)$. Denoting $y\in\mathfrak{m}B$ by $y_{0}$,
$V_{\ell}$ endowed with the $\mathbb{G}_{a,S'}$-action induced by
$R(\partial_{P})$ is equivarianly isomorphic to the smooth subvariety
in $S'\times_{\mathbb{Z}}\mathbb{A}_{\mathbb{Z}}^{n+2}=\mathrm{Spec}(B[z_{1},z_{2},y_{1},\ldots,y_{n}])$
defined by the system of equations
\[
\begin{cases}
y_{i}y_{j}-y_{k}y_{\ell} & =0\quad i,j,k,\ell=0,\ldots,n,\;i+j=k+\ell\\
z_{2}y_{i}-z_{1}y_{i+1} & =0\quad i=0,\ldots,n-1\\
xy_{i+1}-y_{i}(y_{0}z_{1}+1) & =0\quad i=0,\ldots,n-1\\
xz_{2}-z_{1}(y_{0}z_{1}+1) & =0,
\end{cases}
\]
endowed with the $\mathbb{G}_{a,S'}$-action induced by the locally
nilpotent $B$-derivation 
\[
x\partial_{z_{1}}+(2y_{0}z_{1}+1)\partial_{z_{2}}+\sum_{i=1}^{n}iy_{0}y_{i-1}\partial_{y_{i}}
\]
of its coordinate ring. Since $V_{\ell}$ is smooth, hence normal,
it follows that $R_{\ell}$ is also normal by Lemma \ref{lem:Normalization-Commute-Rees}.
\end{example}

\bibliographystyle{amsplain}

\end{document}